\theoremstyle{plain}
\newtheorem{theorem}{Theorem}[section]
\newtheorem{lemma}[theorem]{Lemma}
\newtheorem{corollary}[theorem]{Corollary}
\newtheorem{remark}[theorem]{Remark}
\newtheorem{proposition}[theorem]{Proposition}
\newtheorem*{claim}{Claim}
\newtheorem*{subclaim}{Subclaim}
\def\Len{\mbox{\rm{\underline{length}}}}
\def\Mod{\mbox{\rm{Mod}}}
\def\ML{{\mathcal {ML}}}
\def\AF{{\mathcal{AF}}}
\def\PML{{\mathbb P}{\mathcal {ML}}}
\def\PAF{{\mathbb P}{\mathcal{AF}}}
\def\EL{{\mathcal {EL}}}
\def\T{{\mathcal T}}
\def\F{{\mathcal F}}
\def\C{{\mathcal C}}
\def\G{{\mathcal G}}
\def\B{{\mathcal B}}
\def\Diff{\mbox{\rm{Diff}}}
\def\id{\mbox{\rm{Id}}}
\def\A{{\mathcal A}}
\def\H{{\mathcal H}}
\def\ev{\mbox{\rm{ev}}}
\def\Conf{\mbox{\rm{Conf}}}
\def\E{{\mathcal E}}
\def\DD{{\mbox{\rm{DD}}}}
\def\PSL{{\mbox{\rm{PSL}}}}
\def\MF{{\mathcal {MF}}}
\def\PMF{{\mathbb P}{\mathcal {MF}}}
\def\best{{\text{best}}}
\def\arat{{\text{ar}}}
\def\zero{\ast}
\def\Zeros{{\mbox{\rm{Zeros}}}}
\def\re{{\mbox{\rm{Re}}}}
\def\AH{{\mbox{\rm{AH}}}}
\def\QF{{\mbox{\rm{QF}}}}
\def\Isom{{\mbox{\rm{Isom}}}}
\newcommand{\Per}{{\text{Per}}}
\newcommand{\I}{{\textbf{i}}}
\newcommand{\sing}{{\text{sing}}}
\newcommand{\ver}{{\text{Vert}}}
\title{Connectivity of the space of ending laminations.}
\author{Christopher J. Leininger\thanks{partially supported by NSF grant DMS-0603881} \& Saul Schleimer\thanks{partially
supported by NSF grant DMS-0508971}}
\begin{document}

\maketitle

\begin{abstract}We prove that for any closed surface of genus at least four, and any punctured surface of genus at least
two, the space of ending laminations is connected.  A theorem of E. Klarreich implies that this space is homeomorphic
to the Gromov boundary of the complex of curves.  It follows that the boundary of the complex of curves is connected in
these cases, answering the conjecture of P.~Storm.  Other applications include the rigidity of the complex of curves
and connectivity of spaces of degenerate Kleinian groups.
\end{abstract}

\section{Introduction}

Let $\Sigma = \Sigma_{g,n}$ be an orientable surface with genus $g$ and $n$ marked points. The space of measured
foliations on $\Sigma$ is denoted $\MF(\Sigma)$.  A measured foliation is \textbf{arational} if there are no leaf
cycles; see Section \ref{foliationsection} (such foliations are necessarily minimal).  We denote the space of arational
measured foliations on $\Sigma$ by $\AF(\Sigma) \subset \MF(\Sigma)$ and give it the subspace topology.  The space
$\AF(\Sigma)$ is related to the complex of curves and spaces of hyperbolic 3-manifolds as described below.

\begin{theorem} \label{mainarational}
If $g \geq 4$ or $g \geq 2$ and $n \geq 1$, then $\AF(\Sigma_{g,n})$ is connected.
\end{theorem}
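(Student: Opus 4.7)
The plan is to find a dense, easily-connectable subset of $\AF(\Sigma)$, and then to combine density with a robust path-construction mechanism to conclude connectivity of the full space. The structure of the named results in the preamble strongly suggests this scheme.

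First, I would isolate a class of well-behaved arational foliations --- call them \emph{best} foliations --- amenable to explicit manipulation (for instance, foliations with prescribed singularity combinatorics, or arising from symmetries of branched covers, or supported on specific train tracks with controlled branching). The first main step, Proposition \ref{bestdense}, is that the best foliations are dense in $\AF(\Sigma)$. I would prove this by approximating an arbitrary $\F \in \AF(\Sigma)$ using train-track carrying: given any carrying train track $\tau$ for $\F$, perturb weights on $\tau$ to land on a foliation with the best combinatorics, and use Proposition \ref{arationalcriterion} to check arationality of the approximants.

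Second, I would show that best foliations form a path-connected subset of $\AF(\Sigma)$. This would be organised by Theorem \ref{involutiongraph}: define a combinatorial graph whose vertices are best foliations and whose edges record explicit paths in $\AF$ between them (likely paths associated to involutions or to elementary moves on underlying train tracks), and argue that this graph is connected. This is where the hypothesis $g \geq 4$, or $g \geq 2$ and $n \geq 1$, should enter: one needs enough room in $\Sigma$ to realize such moves, and in small cases the graph fails to be connected. Proposition \ref{takingcover} would let one lift the problem to a cover where more moves are available, reducing connectivity to a case with greater complexity. Corollary \ref{connectingbest} would then package this as: any two best foliations can be joined by a path in $\AF(\Sigma)$.

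Finally, Theorem \ref{connectingneighborhood} together with Proposition \ref{arationalstays} would provide the local step: every arational foliation has a neighborhood in $\AF$ each of whose points is joined by a path staying inside $\AF$ to some nearby best foliation. Density from Step 1 guarantees such a best foliation exists in every neighborhood; Step 2 connects all best foliations together; hence every point of $\AF(\Sigma)$ lies in a single path-component. The main obstacle is Proposition \ref{arationalstays}: when one varies a foliation inside $\MF(\Sigma)$, the arationality condition (no leaf cycles) is fragile and can be destroyed by small perturbations, so constructing paths that provably remain in $\AF$ requires careful train-track bookkeeping and is the heart of the argument.
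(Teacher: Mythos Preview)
Your high-level scheme --- build a dense subset of $\AF(\Sigma)$ and show it is path-connected --- is exactly the paper's strategy, but almost every implementation detail you guess is off, and several of the named results play roles quite different from what you suppose.

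First, you miss the fundamental two-case split. The paper proves the punctured case ($g\geq 2$, $n\geq 1$) first and independently, then deduces the closed case ($g\geq 4$) from it. Proposition \ref{takingcover} and Theorem \ref{involutiongraph} belong \emph{only} to the closed case: the former embeds $\AF(S_\sigma,{\bf z}_\sigma)$ of a branched quotient into $\AF(S)$ (so covers go the opposite direction from what you describe), and the latter is a graph whose vertices are $\Mod(S)$--conjugates of a fixed involution $\sigma$, not a graph of foliations. Connectivity of this graph shows that the images of the various $\AF(S_\sigma,{\bf z}_\sigma)$ chain together to give a dense connected subset of $\AF(S)$.

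Second, in the punctured case the machinery is not train tracks at all but holomorphic $1$--forms. The ``best'' objects live in $\Omega(S,{\bf z})$: they are classes $\hat\omega$ with $|\re(\bar\omega)|$ arational on the closed surface and with relative periods $\Per_{ij}(\hat\omega)$ not contained in the absolute periods $\Per(\hat\omega)$. Proposition \ref{arationalcriterion} is precisely this period criterion for arationality on $(S,{\bf z})$; Proposition \ref{bestdense} proves density by a measure-theoretic/countability argument on periods, not by perturbing train-track weights. Paths are built by choosing a \emph{twisting pair} of curves $\alpha,\beta$ and pushing the marked points around them at constant $\re(\omega)$--speed: this preserves the period condition exactly, hence keeps you inside $\Omega_\best$. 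The group $\langle\hat D_\alpha,\hat D_\beta\rangle$ contains a pseudo-Anosov, and iterating along its axis in the Cayley graph produces a path in $\AF(S,{\bf z})$ limiting on the stable foliation $\mu_s$. Theorem \ref{connectingneighborhood} then says: on each set of an open cover of $\Omega_*(S,{\bf z})$, any two best points share a twisting pair and hence are both joined to the same $\mu_s$. Chaining across the cover connects all best points. Your ``local step'' (joining an arbitrary arational foliation to a nearby best one) is not needed and is not attempted; density of the path-connected set already gives connectivity of its closure.
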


We note that for $\Sigma = \Sigma_{1,1}$ and $\Sigma_{0,4}$, the space $\AF(\Sigma)$ is naturally homeomorphic to
$\mathbb R - \mathbb Q$, and so is totally disconnected.

To describe some of the applications of this theorem, we recall that the work of Masur and Minsky \cite{MM1} together
with that of Klarreich \cite{klarreich} (see also \cite{bowditch,hamenstadtboundary}) implies that Harvey's complex of
curves $\C(\Sigma)$ is $\delta$--hyperbolic, and the Gromov boundary is homeomorphic to the quotient
\[ \partial \C(\Sigma) \cong \AF(\Sigma)/\hspace{-.1cm}\sim . \]
Here $\sim$ denotes the equivalence relation obtained by forgetting the transverse measures.  We thus have the
following corollary of Theorem \ref{mainarational}, affirmatively answering the Storm Conjecture for most surfaces (see
\cite[Question 10]{kentleininger} and \cite[page 8]{minskyICM}).

\begin{corollary} \label{C:storm}
If $g \geq 4$ or $g \geq 2$ and $n \geq 1$, then $\partial \C(\Sigma_{g,n})$ is
connected.
\end{corollary}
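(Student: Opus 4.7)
The plan is to deduce the corollary directly from Theorem \ref{mainarational} together with Klarreich's identification of the Gromov boundary of $\C(\Sigma)$ quoted in the excerpt, so essentially no new work is required beyond assembling the pieces.

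Concretely, I would proceed as follows. First, I invoke Theorem \ref{mainarational} to conclude that $\AF(\Sigma_{g,n})$ is connected under the stated hypotheses on $g$ and $n$. Next, I cite the isomorphism
\[ \partial \C(\Sigma_{g,n}) \cong \AF(\Sigma_{g,n})/\hspace{-.1cm}\sim \]
coming from the results of Masur--Minsky and Klarreich, where $\sim$ forgets the transverse measure. This identification is in particular a homeomorphism, so connectivity of either side is equivalent to connectivity of the other.

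Finally, I observe that the quotient map $q \colon \AF(\Sigma_{g,n}) \to \AF(\Sigma_{g,n})/\hspace{-.1cm}\sim$ is continuous and surjective by definition of the quotient topology, and continuous surjective images of connected spaces are connected. Composing with the Klarreich homeomorphism yields a continuous surjection from the connected space $\AF(\Sigma_{g,n})$ onto $\partial \C(\Sigma_{g,n})$, proving connectivity of the latter.

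There is no real obstacle here, since all of the substantive content is in Theorem \ref{mainarational} and in the previously established Klarreich boundary theorem. The only thing to be slightly careful about is that the statement of the Klarreich theorem is phrased as a homeomorphism (and not merely a set bijection), so that the topology on $\partial \C$ inherited from Gromov's construction really does match the quotient topology on $\AF/\!\sim$; this is part of Klarreich's theorem as used in the excerpt, and once one has it the corollary follows in one line.
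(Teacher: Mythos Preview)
Your argument is correct and is exactly the paper's approach: the corollary is stated immediately after the Klarreich identification $\partial\C(\Sigma)\cong\AF(\Sigma)/\!\sim$ as a direct consequence of Theorem~\ref{mainarational}, with no separate proof given. The only content is that the quotient of a connected space is connected, which you spell out explicitly.
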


Connectivity of $\partial \C(\Sigma)$ is a useful property when trying to understand the quasi-isometric geometry of
$\C(\Sigma)$, as was shown by the second author and Rafi \cite{rafischleimer}.  Recall the measure of complexity
$\xi(\Sigma_{g,n}) = 3g - 3 + n$.

\begin{theorem} [Rafi--Schleimer]
Suppose $\partial \C(\Sigma)$ is connected and $\xi(\Sigma) \geq 4$.  If $\Sigma'$ is any surface for which
$\C(\Sigma)$ and $\C(\Sigma')$ are quasi-isometric, then $\Sigma \cong \Sigma'$.
\end{theorem}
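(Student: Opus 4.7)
The plan is to leverage the fact that both curve complexes are $\delta$--hyperbolic (Masur--Minsky), so that a quasi-isometry $\phi : \C(\Sigma) \to \C(\Sigma')$ extends canonically to a homeomorphism $\partial \phi : \partial \C(\Sigma) \to \partial \C(\Sigma')$ of Gromov boundaries. By Klarreich's theorem (quoted in the excerpt), these boundaries are identified with $\AF(\Sigma)/\!\sim$ and $\AF(\Sigma')/\!\sim$. Thus the problem reduces to recovering $\Sigma$ up to homeomorphism from the topology of its space of ending laminations, together with the assumption $\xi(\Sigma) \geq 4$.

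First I would transfer the connectivity hypothesis across $\partial \phi$: since $\partial \C(\Sigma)$ is connected, so is $\partial \C(\Sigma')$. In particular $\Sigma'$ cannot be one of the sporadic surfaces $\Sigma_{1,1}$ or $\Sigma_{0,4}$, whose ending lamination spaces are totally disconnected, and more generally this will constrain $\Sigma'$ to lie among the high-complexity cases covered by Theorem \ref{mainarational}. This step is essentially automatic given the setup but is the reason one cannot do without the connectivity assumption.

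Next I would try to read off the complexity $\xi(\Sigma)$ from the homeomorphism type of $\partial \C(\Sigma)$. The natural candidate is the topological (covering) dimension: one expects $\dim \partial \C(\Sigma)$ to equal $\xi(\Sigma) - 2$, matching the generic stratum of $\PMF(\Sigma)$ in which arational foliations are dense. Combined with an upper bound argument, this would give $\xi(\Sigma) = \xi(\Sigma')$ from the induced boundary homeomorphism. Once complexities agree, one must still distinguish surfaces $\Sigma_{g,n}$ and $\Sigma_{g',n'}$ with $3g+n = 3g'+n'$ but different topological types; for this I would examine local invariants of the boundary at laminations that arise from proper subsurfaces via Masur--Minsky subsurface projections, using the fact that a boundary point supported in a subsurface $Y \subset \Sigma$ should have a neighborhood whose topology reflects $\xi(Y)$ and $\xi(\Sigma \setminus Y)$, thereby encoding the combinatorics of the curve graph of $\Sigma$ and ultimately $\Sigma$ itself.

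The main obstacle will be this last step: producing topological invariants of $\partial \C(\Sigma)$ finer than dimension that separate, e.g., $\Sigma_{g,n}$ from $\Sigma_{g+1,n-3}$. Dimension alone cannot do this, so one needs a genuinely local analysis of the boundary near laminations filling a fixed subsurface, which is delicate because the subsurface structure is not directly visible from a single point of $\partial \C(\Sigma)$. I expect the proof to proceed by reconstructing the coarse combinatorics of accessory subsurface projections from neighborhood bases in the boundary, turning a boundary homeomorphism into a simplicial isomorphism of curve graphs, at which point classical rigidity results (Ivanov, Korkmaz, Luo) conclude $\Sigma \cong \Sigma'$.
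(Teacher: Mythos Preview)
The paper does not prove this theorem; it is quoted from \cite{rafischleimer} as an application of Corollary~\ref{C:storm}. So there is no ``paper's own proof'' to match, but your outline can be compared with the actual Rafi--Schleimer argument.

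Your endgame is right: one wants to promote the quasi-isometry to a simplicial isomorphism and then invoke Ivanov--Korkmaz--Luo. But the route you propose---recover $\xi(\Sigma)$ from the topological dimension of $\partial\C(\Sigma)$, then separate surfaces of equal complexity by local invariants of the boundary---is not what Rafi--Schleimer do, and it has real gaps. First, the equality $\dim\partial\C(\Sigma)=\xi(\Sigma)-2$ was not available when \cite{rafischleimer} was written; results of that flavor came later via Gabai's identification of $\EL(\Sigma)$ with a N\"obeling space. Second, your plan to detect subsurface structure by looking ``near laminations supported in a proper subsurface $Y$'' cannot work as stated: every point of $\partial\C(\Sigma)$ is a \emph{filling} lamination, so no boundary point is supported on a proper subsurface. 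The subsurface combinatorics is visible in $\C(\Sigma)$ (as links of vertices), not as distinguished points of the boundary.

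The actual argument in \cite{rafischleimer} uses the quasi-isometry itself, not merely the induced boundary homeomorphism. Connectivity of $\partial\C(\Sigma)$ is used to show that a quasi-isometry coarsely preserves the vertex set: roughly, one characterizes vertices by the coarse geometry of nearby geodesic rays landing on the boundary, and connectivity of the boundary is what makes this characterization go through. Once the quasi-isometry is shown to be bounded distance from a simplicial map, Ivanov--Korkmaz--Luo rigidity gives $\Sigma\cong\Sigma'$. So connectivity is the input to a coarse-geometric characterization of vertices, not a device for computing a topological invariant of the boundary in isolation.
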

A version of this theorem holds for $\xi(\Sigma) \geq 2$: a quasi-isometry from $\C(\Sigma)$ to $\C(\Sigma')$ is a
bounded distance from a simplicial isomorphism.  For $\xi(\Sigma) =2,3$ there are nonhomeomorphic surfaces with
isomorphic curve complexes.

Now view $\Sigma = \Sigma_{g,n}$ as a surface of genus $g$ with $n$ boundary components.  The natural homeomorphism
between $\MF(\Sigma)$ and $\ML(\Sigma)$, the space measured laminations on $\Sigma$, sends $\AF(\Sigma)$ to the
subspace of those laminations which are \textbf{filling}.  The quotient of this space by forgetting the transverse
measures is called the space of \textbf{ending laminations}, and is denoted $\EL(\Sigma)$.  Via the natural
homeomorphism we see that the following are all homeomorphic
\[\EL(\Sigma) \cong \AF(\Sigma)/\hspace{-.1cm}\sim \, \cong \partial \C(\Sigma).\] The space $\EL(\Sigma)$ is precisely the set of ending
laminations of geometrically infinite Kleinian surface groups isomorphic to $\pi_1(\Sigma)$ without accidental
parabolics.

The following is an easy consequence of Theorem \ref{mainarational} and its proof (see Theorems \ref{nocut1} and \ref{nocut2}).

\begin{theorem} \label{endinglams}
The space $\EL(\Sigma_{g,n})$ is connected and has no cut points when $g \geq 4$ or $g \geq 2$ and $n \geq 1$.
\end{theorem}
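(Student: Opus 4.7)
The connectivity assertion is immediate from Theorem \ref{mainarational}: the quotient map $q : \AF(\Sigma) \to \AF(\Sigma)/{\sim} \;\cong\; \EL(\Sigma)$ is continuous and surjective, so $\EL(\Sigma)$ is a continuous image of a connected space.

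For the no-cut-points statement, fix $\lambda \in \EL(\Sigma)$ and let $V_\lambda := q^{-1}(\lambda) \subset \AF(\Sigma)$. Every transverse measure on the topological lamination underlying $\lambda$ yields a point of $V_\lambda$, and since the space of such transverse measures is a finite-dimensional simplex, $V_\lambda$ is a cone on that simplex, sitting as a proper ``thin'' subset of the much larger space $\AF(\Sigma)$. Because $\EL(\Sigma) \setminus \{\lambda\} = q(\AF(\Sigma) \setminus V_\lambda)$ and $q$ is continuous, it suffices to show that $\AF(\Sigma) \setminus V_\lambda$ is connected for every $\lambda$.

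My plan is to revisit the proof of Theorem \ref{mainarational} and upgrade each step so as to avoid a prescribed $V_\lambda$. From the foreshadowed propositions on ``best'' arational foliations, the connecting-neighborhood theorem, and the involution graph, I expect the proof of Theorem \ref{mainarational} to proceed in three stages: (i) a distinguished dense subset of \emph{best} arational foliations is singled out; (ii) any two best foliations are connected by explicit paths built from the involution graph and the connecting-neighborhood construction; (iii) an approximation argument promotes this to path-connectivity of $\AF(\Sigma)$. In each stage I would argue there is enough freedom to avoid $V_\lambda$: the best foliations not lying over $\lambda$ remain dense (there is a single $\lambda$ but a countable family of best foliations); the combinatorial connecting paths can be rerouted locally around any vertex over $\lambda$; and best approximants to any $F \in \AF(\Sigma) \setminus V_\lambda$ eventually lie in the same complement.

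The principal obstacle is step (ii): verifying that the involution graph together with the connecting-neighborhood construction supplies enough alternative edges that a path between two best foliations can be chosen with no intermediate vertex projecting to $\lambda$. This is exactly where the hypothesis $g \geq 4$ or $(g \geq 2, n \geq 1)$ should enter, since the same complexity bound gives the original proof of Theorem \ref{mainarational} enough combinatorial room in the involution graph to connect best foliations at all. In the excluded cases $\Sigma_{1,1}$ and $\Sigma_{0,4}$ this room is absent and $\AF(\Sigma)$ is already totally disconnected, so no such rerouting could succeed there.
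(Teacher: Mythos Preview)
Your overall strategy matches the paper's: Theorems \ref{nocut1} (punctured case) and \ref{nocut2} (closed case) both show that, for each $[\mu]$, the dense connected set $W$ built during the proof of Theorem \ref{mainarational} can be modified to a dense connected $W_0$ missing $[\mu]$. But your outline leaves the decisive moves unexecuted, and a couple of your remarks are off.

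First, the ``countable family of best foliations'' is not countable; $\Omega_\best(S,{\bf z})$ is a dense subset of a manifold. What makes the avoidance work in the punctured case is more specific. The paper notes (via Lemma \ref{nonorientstable}) that $W$ consists of exactly two \emph{disjoint} types of points: images under $|\re|$ of $\Omega_\best$ (orientable, no $1$--prong) and stable foliations $\mu_s$ of certain pseudo-Anosov elements of $\B(S,{\bf z})$ (these necessarily have a $1$--prong). If $[\mu]$ is of the first type, replace $\Omega_\best$ by the subset $\Omega'_\best$ of those $\hat\eta$ with $[|\re(\bar\eta)|]\neq[\mu]$; this is still dense in $\Omega(S,{\bf z})$ and the entire construction goes through verbatim. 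If $[\mu]=[\mu_s]$ is of the second type, swap $\mu_s$ for the \emph{unstable} foliation $\mu_u$ of the same $\psi$: Lemma \ref{pathtostable} applied to $\psi^{-1}$ connects $|\re|(\Omega_\best\cap U)$ to $\mu_u$, so $(W\setminus\{[\mu_s]\})\cup\{[\mu_u]\}$ is again dense and path connected. No graph rerouting occurs here.

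Second, you conflate the two constructions: the connecting-neighborhood theorem and twisting pairs are used only when ${\bf z}\neq\emptyset$, while the involution graph appears only when ${\bf z}=\emptyset$. In the closed case (Theorem \ref{nocut2}) the paper does \emph{not} reroute edges of the involution graph. Instead it invokes the punctured case: each building block $X_{\sigma_0}\cong\AF(S_{\sigma_0},{\bf z}_{\sigma_0})$ projects to a set $W_j\subset\AF(S)/\hspace{-.1cm}\sim$ which, by Theorem \ref{nocut1}, already has no cut points; and adjacent blocks share at least \emph{two} points (the stable and unstable fixed points of the lifted pseudo-Anosov). Hence each $W_j\setminus\{[\mu]\}$ is connected and consecutive blocks still overlap, so the union $W\setminus\{[\mu]\}$ stays connected. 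Your ``principal obstacle'' is therefore not where the work lies.
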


The Ending Lamination Theorem of Brock--Canary--Minsky \cite{ELCI},\cite{ELCII} asserts that the end invariants $\E(M)
= (\E^-(M),\E^+(M))$ are complete invariants of the hyperbolic manifold $M \in \AH(\Sigma,\partial \Sigma)$.  Moreover,
on the space of doubly degenerate Kleinian surface groups
\[ \DD(\Sigma,\partial \Sigma) = \{ M \in \AH(\Sigma,\partial \Sigma) \, | \, \E(M) \in \EL(\Sigma) \times \EL(\Sigma)\}, \]
the map $\E$ is a homeomorphism (Theorem \ref{ELThomeo1}).  Although this seems to be well-known, we provide a proof in
Section \ref{hyperbolicgeometry} for completeness. Thus one has the following.

\begin{proposition}
The space $\DD(\Sigma_{g,n})$ is connected if $g \geq 4$ or $g \geq 2$ and $n \geq 1$.
\end{proposition}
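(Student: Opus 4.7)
The plan is to transport the question across the homeomorphism provided by the end invariant map. By the referenced Theorem \ref{ELThomeo1}, the map $\E$ identifies $\DD(\Sigma_{g,n})$ with its image in $\EL(\Sigma_{g,n}) \times \EL(\Sigma_{g,n})$. Since a doubly degenerate Kleinian surface group has two distinct ending laminations (equal invariants would force the group not to be doubly degenerate, and conversely every pair of distinct points in $\EL \times \EL$ is realized, as part of the Ending Lamination Theorem), this image is the complement of the diagonal $\Delta \subset \EL(\Sigma_{g,n}) \times \EL(\Sigma_{g,n})$. So the problem reduces to showing that $(\EL(\Sigma_{g,n}) \times \EL(\Sigma_{g,n})) \setminus \Delta$ is connected.

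The main tool I would use is Theorem \ref{endinglams}, which says that under the hypothesis $g \geq 4$, or $g \geq 2$ and $n \geq 1$, the space $X := \EL(\Sigma_{g,n})$ is connected with no cut points, i.e.\ $X \setminus \{x\}$ is connected for every $x \in X$. I would also note that $X$ is uncountable (inherited from $\MF(\Sigma)$), so in particular $|X| \geq 3$.

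I would then prove the following point-set lemma: if $X$ is connected, has no cut points, and contains at least three points, then $Y := (X \times X) \setminus \Delta$ is connected. For each $a \in X$, set $V_a = \{a\} \times (X \setminus \{a\})$ and $H_a = (X \setminus \{a\}) \times \{a\}$; both are homeomorphic to $X \setminus \{a\}$, hence connected. Given two points $(p,q),(r,s) \in Y$, one has $(p,q) \in V_p$ and $(r,s) \in V_r$. The connected sets $V_p, H_q, V_r, H_s$ intersect in the expected points, e.g.\ $V_p \cap H_s = \{(p,s)\}$ when $p \neq s$. Thus $V_p \cup H_q \cup V_r \cup H_s$ is connected whenever one of these intersections is nonempty, which handles all configurations except $(r,s) = (q,p)$. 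In that remaining case, pick any $e \in X \setminus \{p,q\}$ (available since $|X| \geq 3$) and connect via $V_p \cup H_q \cup V_e \cup H_p$, which contains both $(p,q)$ and $(q,p)$ and is connected because $V_e \cap H_q = \{(e,q)\}$ and $V_e \cap H_p = \{(e,p)\}$.

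I do not expect a serious obstacle here; the delicate ingredients (connectedness and absence of cut points in $\EL(\Sigma)$, and the homeomorphism $\E$) are the content of Theorem \ref{endinglams} and Theorem \ref{ELThomeo1}, both of which can be invoked. The only care needed is in justifying that the image of $\E|_{\DD}$ really avoids the diagonal; once this is recorded, the argument reduces to the elementary cross-and-L chaining above.
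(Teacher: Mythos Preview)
Your proposal is correct and follows the same route the paper does: the paper states the proposition as an immediate consequence of Theorem \ref{ELThomeo1} (the homeomorphism $\E:\DD(\Sigma,\partial\Sigma)\to \EL(\Sigma)\times\EL(\Sigma)-\Delta$) together with Theorem \ref{endinglams} (connectedness and absence of cut points in $\EL(\Sigma)$), without writing out the elementary point-set argument that $(X\times X)\setminus\Delta$ is connected for such $X$. You have supplied exactly that missing detail, and your chaining argument with the vertical and horizontal slices $V_a$, $H_a$ is correct, including the handling of the exceptional case $(r,s)=(q,p)$ via a third point $e$.
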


\begin{remark}
On all of $\AH(\Sigma,\partial \Sigma)$, the map $\E$ fails to be continuous with respect to any of the usual
topologies on the target (see \cite{brocklength}).  However, discontinuity occurs in the presence of accidental
parabolics which is not an issue for the situation being discussed here.
\end{remark}

A similar result holds if one considers the subset of the boundary of a Bers slice $\partial_0 B_Y \subset
\partial B_Y$ consisting of those Kleinian groups without accidental parabolics.  Then $\partial_0 B_Y \cong
\EL(\Sigma)$ (see Theorem \ref{ELThomeo2}). So, as a corollary of Theorem \ref{endinglams} or \ref{mainarational}, we
have

\begin{corollary}
The space $\partial_0 B_Y$ is connected if $g \geq 4$ or $g \geq 2$ and $n \geq 1$.
\end{corollary}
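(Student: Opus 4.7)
The plan is to combine two results that the paper states or references. First, Theorem \ref{ELThomeo2} provides a homeomorphism $\partial_0 B_Y \cong \EL(\Sigma_{g,n})$, extending the Ending Lamination Theorem of Brock--Canary--Minsky to the accidental-parabolic-free locus on the Bers slice boundary. Second, Theorem \ref{endinglams} (itself a direct consequence of the main Theorem \ref{mainarational}) asserts that $\EL(\Sigma_{g,n})$ is connected under the hypothesis $g \geq 4$, or $g \geq 2$ and $n \geq 1$.

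Given these two inputs, the proof is essentially one line. Connectivity is a topological invariant, so the homeomorphism of Theorem \ref{ELThomeo2} transports connectivity of $\EL(\Sigma_{g,n})$ to connectivity of $\partial_0 B_Y$. No additional construction, no additional hypothesis, and no case analysis on the topological type of $\Sigma$ is needed at this stage; the assumption on $(g,n)$ is already baked into the invocation of Theorem \ref{endinglams}.

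The genuine obstacle lies upstream, not inside the corollary. Establishing Theorem \ref{ELThomeo2}, i.e.\ that $\partial_0 B_Y \cong \EL(\Sigma)$, requires showing that the end-invariant map is a continuous bijection with continuous inverse on the accidental-parabolic-free portion of $\partial B_Y$. Injectivity and surjectivity come from the Ending Lamination Theorem, while continuity is delicate: on all of $\partial B_Y$ the end-invariant map can fail to be continuous precisely where accidental parabolics appear, as noted in the remark following the proposition on $\DD(\Sigma_{g,n})$. Restricting to $\partial_0 B_Y$ is what makes the standard algebraic-to-geometric convergence arguments go through; once this is carried out in Section \ref{hyperbolicgeometry}, the corollary drops out immediately from Theorem \ref{endinglams} with no further effort.
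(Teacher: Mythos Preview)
Your proposal is correct and matches the paper's own argument: the corollary is stated as an immediate consequence of Theorem \ref{ELThomeo2} (the homeomorphism $\partial_0 B_Y \cong \EL(\Sigma)$) together with Theorem \ref{endinglams} (or Theorem \ref{mainarational}), exactly as you describe. Your additional commentary on where the substantive work lies is accurate but goes beyond what the paper records for this corollary.
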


Finally, we remark that Theorem \ref{mainarational} has some bearing on problems regarding negatively curved
$4$--manifolds.  More precisely, an open question is whether or not there exists a closed, negatively curved
$4$--manifold $M$ which fibers over a surface (see Question 12.3(b) of \cite{bestvinaproblems}).  The fiber is also a
surface $\Sigma$, and a consequence of the work of Farb--Mosher \cite{FMcc} is that if such a manifold exists, then
there is an embedding of a circle to $\AF(\Sigma)$. One approach to prove that there are no such $4$-manifolds would
thus be to prove that $\AF(\Sigma)$ is totally disconnected, or at least, prove that $\AF(\Sigma)$ contains no circles.
Theorem \ref{mainarational} thus removes this obstruction---indeed, during the course of the proof of Theorem
\ref{mainarational}, we will construct many circles embedded in $\AF(\Sigma)$ (see Section \ref{cutpointsection1}).

\subsection{Plan of the paper}

The proof of Theorem \ref{mainarational} for surfaces with a single marked point is considerably simpler, and follows
from a suggestion made to us by Ken Bromberg.  Proceed as follows:  Fix an arational measured foliation on
$\Sigma_{g,0}$. Introducing a marked point yields an arational measured foliation on $\Sigma_{g,1}$ and moving the
point produces paths of such foliations. Indeed, the set of foliations so obtained forms a dense, path-connected subset
of $\AF(\Sigma_{g,1})$. Thus $\AF(\Sigma_{g,1})$ is connected.  The fact that the closure of a connected set is
connected will be exploited several times.

\begin{remark}
In joint work with Mahan Mj \cite{lms} we develop further tools, combining ideas from \cite{kls} and \cite{MahanG} to
provide a more precise picture for surfaces with one marked point.  As a consequence we prove that $\EL(\Sigma_{g,1})$
is path connected and locally path-connected.  We also note that using different techniques, David Gabai has now proven this for all surfaces
$\Sigma_{g,n}$ with $3g+n \geq 5$ \cite{gabai}.
\end{remark}

For surfaces $\Sigma_{g,n}$ with $n \geq 2$ a similar strategy can be employed.  However, the position of the marked
points is much more delicate.  Specifically, starting with an arational measured foliation on $\Sigma_{g,0}$, arbitrary
placement of the $n$ marked points does not result in an arational measured foliation on $\Sigma_{g,n}$. Consequently,
we must first devise an effective criteria which guarantees that the position of $n$ marked points determines an
arational measured foliation on $\Sigma_{g,n}$.  With such a criteria at our disposal, we can begin producing paths in
$\AF(\Sigma_{g,n})$.

Second, we must come to terms with the fact that the reduced flexibility in the placement of the marked points means
that we have fewer paths to work with.  In particular there is no obvious dense path-connected set.  We use the
dynamics of pseudo-Anosov mapping classes to produce paths which connect to pseudo-Anosov fixed points.  Concatenating
such paths we are able to produce a dense path-connected set, thus proving that $\AF(\Sigma_{g,n})$ is connected.

Our criteria for the positions of the marked points requires that we chose orientable foliations in $\AF(\Sigma_{g,0})$
when constructing paths.   Moreover, to construct our dense path-connected subset of $\AF(\Sigma_{g,n})$, we need some
connected space of (not necessarily arational) orientable foliations to get started.  The space of complex valued
$1$--forms (holomorphic with respect to varying complex structures on $\Sigma_{g,0}$) provides such a space.

To prove that $\AF(\Sigma_{g,0})$ is connected we apply a branched cover construction to find a family of connected
subsets so that (a) the union of the connected sets is dense and (b) for any two subsets in the family, there is a
finite chain of such subsets so that consecutive subsets in the chain nontrivially intersect.  It follows that the
union is a dense connected set, and hence $\AF(\Sigma_{g,0})$ is connected.

We end the paper by explaining the applications to hyperbolic 3-manifolds mentioned above in more detail.\\

\noindent {\bf Acknowledgements.} We wish to thank Ken Bromberg for suggesting the method of moving marked points, Jeff
Brock for convincing us to not give up on the branched cover approach for the closed case and both for many stimulating
conversations. We would also like to thank Gilbert Levitt for some enjoyable conversations regarding Theorem
\ref{mainarational}.  Finally, we would like to thank Cyril Lecuire and Jeff Brock for explaining to us the precise relation
with hyperbolic 3-manifolds as discussed in Section \ref{hyperbolicgeometry}.

\section{Preliminaries}

In this section we briefly describe the background material we will need, make some notational conventions (most of
which are standard), and make some preliminary observations.

We let $S$ denote a closed surface of genus at least $2$ and ${\bf z} = \{z_1,...,z_n\} \subset S$ a set with $n \geq
0$ points.  Because we will wish to refer to the marked points by name, we will write $(S,{\bf z})$ instead of
$\Sigma_{g,n}$.  We view ${\bf z} \subset S$ as an ordered set of distinct points.  We will sometimes refer to it as a
point in the $n$--fold product $S \times ... \times S$.

We will frequently make definitions for $(S,{\bf z})$ and consider them valid for $S$ unless they clearly only apply
when $|{\bf z}| \neq 0$.

\subsection{Curves and paths}

We will let $\C^0(S,{\bf z})$ denote the set of isotopy classes of essential simple closed curves contained in $S -
{\bf z}$. It is also convenient to define $\A^0(S,{\bf z}) \supset \C^0(S,{\bf z})$ by adding to $\C^0(S,{\bf z})$ the
set of all isotopy classes of essential arcs meeting ${\bf z}$ precisely in the endpoints of the arcs. Isotopies must
fix ${\bf z}$.  A curve or arc is essential if it cannot be isotoped into an arbitrarily small neighborhood of a point
of ${\bf z}$. The geometric intersection number $\I(\cdot,\cdot)$ is defined for pairs of points in $\A^0(S,{\bf z})$
as the minimal number of points of intersection between representatives of the curves/arcs.

We let $\Gamma(S)$ denote the set of homotopy classes of oriented closed curves on $S$, and $\Gamma(S,z_i,z_j)$ the set
of homotopy classes of oriented paths from $z_i$ to $z_j$.  Note that $\Gamma(S,z_i,z_j)$ and $\Gamma(S,z_j,z_i)$
differ simply by reversing the orientations on all homotopy classes.  This latter homotopy is relative to the
endpoints, but for example, the path may be homotoped through other marked points.

\subsection{Diffeomorphisms and mapping classes}

The orientation preserving diffeomorphism group of $S$ is denoted $\Diff^+(S)$.  There are several subgroups which we
are interested in:  $\Diff^+(S,{\bf z})$, the subgroup consisting of those diffeomorphisms fixing each $z_i \in {\bf
z}$, $\Diff_0(S)$ and $\Diff_0(S,{\bf z})$, the respective components containing the identity, as well as the intersection
$$\Diff_{0,{\bf z}}(S,{\bf z}) = \Diff_0(S) \cap \Diff^+(S,{\bf z}).$$

The \textbf{mapping class groups} we are interested in are
$$\Mod(S) = \Diff^+(S) / \Diff_0(S)$$
$$\Mod(S,{\bf z}) = \Diff^+(S,{\bf z}) / \Diff_0(S,{\bf z})$$
Given a diffeomorphism $\phi \in \Diff^+(S)$, we will denote its image in $\Mod(S)$ by $\bar \phi$. If $\phi \in
\Diff^+(S,{\bf z})$, then we denote its image in $\Mod(S,{\bf z})$ by $\hat \phi$.

We also have need to consider the $(S,{\bf z})$--\textbf{braid group}
$$\B(S,{\bf z}) = \Diff_{0,{\bf z}}(S,{\bf z})/ \Diff_0(S,{\bf z}) < \Mod(S,{\bf z}).$$
See below for the discussion of the Birman Exact Sequence and the connection to the usual definition of the surface braid group.

The mapping class groups act on the sets of curves and paths.  More precisely, $\Mod(S,{\bf z})$ acts on $\C^0(S,{\bf
z})$, $\A^0(S,{\bf z})$, $\Gamma(S)$, $\Gamma(S,z_i,z_j)$ in the usual way by pushing forward homotopy/isotopy classes.
We denote the result of the mapping class $\hat \phi$ acting on the homotopy/isotopy class of curve/path $\alpha$ by
$\hat \phi(\alpha)$.

\subsection{Configuration spaces}

The configuration space of $n$--ordered points on $S$ ($n \geq 1$) is the subspace of the $n$--fold product $S \times
... \times S$ obtained by removing the locus where two coordinates are equal:
$$\Conf_n(S) = \{(p_1,...,p_n) \, | \, p_i \in S \mbox{ for all } i \mbox{ and } p_i \neq p_j \mbox{ for all } i \neq j \}.$$
Observe that $\Conf_1(S) \cong S$ and for $n \geq 2$, $\Conf_n(S)$ fibers over $\Conf_{n-1}(S)$ with fibers
homeomorphic to $S$ with $(n-1)$ points removed.  Applying the long exact sequence of a fibration inductively, we see
that all higher homotopy groups of $\Conf_n(S)$ vanish.  It follows that the universal covering $\widetilde \Conf_n(S)$
is contractible.

We think of ${\bf z}$ as a basepoint for $\Conf_n(S)$. This determines an evaluation map
$$\ev_{\bf z}: \Diff_0(S) \to \Conf_n(S)$$
given by $\ev_{\bf z}(\phi) = \phi({\bf z})$. As in Birman's work \cite{birmansequence,birmanbook}, the group
$\Diff_{0,{\bf z}}(S,{\bf z})$ acts on the fibers, and makes $\Diff_0(S)$ into a principal $\Diff_{0,{\bf z}}(S,{\bf
z})$--bundle. We will use local trivializations for this fibration which we discuss in more detail in the next section.

The long exact sequence of homotopy groups of a fibration, together with the contractibility of $\Diff_0(S)$---due to
Earle and Eells \cite{earleeells}---gives isomorphisms
\begin{equation} \label{E:birmanisomorphism} \B(S,{\bf z}) = \pi_0(\Diff_{0,{\bf z}}(S,{\bf z})) \cong \pi_1(\Conf_n(S)). \end{equation}
This justifies our referring to $\B(S,{\bf z})$ as the braid group since the last group $\pi_1(\Conf_n(S))$ is the
usual definition for the (pure) $n$--strand braid group on $S$. This isomorphism and the short exact sequence below
were obtained by Birman \cite{birmansequence,birmanbook} for the homeomorphism group.

It follows that the quotient of $\Diff_0(S)$ by the smaller group $\Diff_0(S,{\bf z})$ is the universal cover
$\widetilde \Conf_n(S)$.  We thus obtain $\Diff_0(S)$ as a principal $\Diff_0(S,{\bf z})$--bundle over $\widetilde
\Conf_n(S)$.  Contractibility of $\widetilde \Conf_n(S)$ implies
\begin{equation} \label{E:product1}
\Diff_0(S) \cong \widetilde \Conf_n(S) \times \Diff_0(S,{\bf z}).
\end{equation}
The basepoint ${\bf z}$ for $\Conf_n(S)$ has a canonical lift $\tilde {\bf z}$ to $\widetilde \Conf_n(S)$, namely the image
of the identity in $\Diff_0(S)$.

The inclusion $\Diff^+(S,{\bf z}) < \Diff^+(S)$ induces a homomorphism
$$\Mod(S,{\bf z}) \to \Mod(S).$$
The discussion above, together with the isomorphism theorems from group theory situates this homomorphism into the
\textbf{Birman Exact Sequence}
$$1 \to \B(S,{\bf z}) \to \Mod(S,{\bf z}) \to \Mod(S) \to 1.$$
We use this to view $\B(S,{\bf z})$ as a subgroup of $\Mod(S,{\bf z})$.

\subsection{Local trivializations}  \label{S:trivialization}

We now describe the local trivializations (that is, local sections) for the principal bundles $\Diff_0(S) \to
\widetilde \Conf_n(S)$ and $\Diff_0(S) \to \Conf_n(S)$ which we will use.  We describe these only near the point ${\bf
z}$ as this is our primary case of interest.

Let $B_1,...,B_n$ be open disk neighborhoods of $z_1,...,z_n$, respectively, in $S$ with pairwise disjoint disk
closures $\overline B_1,...,\overline B_n$.  We let $U_1,...,U_n$ be pairwise disjoint open disks with $\overline B_i
\subset U_i$. Write ${\bf B} = B_1 \times ...\times B_n$ and ${\bf U} = U_1 \times ... \times U_n$ with points denoted
${\bf b} = (b_1,...,b_n)$.

Consider a smooth map
$$f:S \times {\bf B} \to S.$$
For ${\bf b} \in {\bf B}$, let $f_{\bf b} = f(\cdot,{\bf b}):S \to S$.  We suppose $f$ has the following properties.
\begin{itemize}
\item $f_{\bf z} = \id$,
\item $f_{\bf b}$ is a diffeomorphism for every ${\bf b} \in {\bf B}$,
\item $f_{\bf b}$ is the identity outside $\displaystyle{\cup_i U_i}$,
\item $f(z_i,{\bf b}) = b_i$ for every ${\bf b} \in {\bf B}$ and $i = 1,...,n$.
\end{itemize}

Note that ${\bf B} \subset \Conf_n(S)$ is a neighborhood of ${\bf z}$ and the map
$$f_{\bf B}:{\bf B} \to \Diff_0(S)$$
given by $f_{\bf B}({\bf b}) = f_{\bf b}$ is a local trivialization for $\Diff_0(S) \to \Conf_n(S)$ over this
neighborhood.  Similarly, this determines a local trivialization for $\Diff_0(S) \to \widetilde \Conf_n(S)$ over the
neighborhood of $\tilde {\bf z}$ obtained by lifting ${\bf B}$ to $\widetilde \Conf_n(S)$. We call either $f$ or
$f_{\bf B}$ a ${\bf B}$--trivialization.

Using local coordinates one can construct a ${\bf B}$--trivialization for any ${\bf U}$ and ${\bf B}$ as above.

\subsection{Measured foliations} \label{foliationsection}

We refer the reader to \cite{FLP} and \cite{moshertrain} for a more detailed discussion of measured foliations on
surfaces.  We remark that the definitions of measured foliations for surfaces with marked points (or punctures) is
treated in \cite{FLP} by replacing the puncture with a boundary component, and making all definitions on compact
surfaces with boundary.

A \textbf{measured foliation} on $(S,{\bf z})$ is a singular foliation $\F$ on $S$ together with a transverse measure
$\mu$ of full support.  The singularities of $\F$ are all required to be $p$--prong singularities for $p \geq 1$, or $p
\geq 3$ if the singularity does not occur at a marked point.  We denote the set of singularities by $\sing(\F) \subset
S$.

Given a measured foliation $(\F,\mu)$, and $\alpha \in \C^0(S,{\bf z})$, the geometric intersection number
$\I(\alpha,(\F,\mu))$ is defined as the infimum
$$\I(\alpha,(\F,\mu)) = \inf_{\alpha_0 \in \alpha} \int_{\alpha_0} \mu.$$
This is the infimum of the total variation of $\alpha_0$ as $\alpha_0$ ranges over all representatives of $\alpha$.

Two measured foliations $(\F,\mu)$ and $(\F',\mu')$ are declared to be equivalent on $(S,{\bf z})$ if
$$\I(\alpha,(\F,\mu)) = \I(\alpha,(\F',\mu'))$$
for all $\alpha \in \C^0(S,{\bf z})$.  Th\'eor\`eme 1 of Expos\'e 11 in \cite{FLP} states (in particular) that this is
the same as the equivalence relation on measured foliations generated by Whitehead equivalence and isotopy on $(S,{\bf
z})$.  We denote the space of equivalence classes by $\MF(S,{\bf z})$, topologized as a subspace of $\mathbb
R^{\C^0(S,{\bf z})}$ via the inclusion
$$[\F,\mu] \mapsto \{ \I(\alpha,(\F,\mu)) \}_{\alpha \in \C^0(S,{\bf z})}.$$

It will be convenient at times to denote the equivalence class of $(\F,\mu)$ by $\mu$ rather than $[\F,\mu]$.  Thus,
when we write $\mu$ we are referring to an equivalence class of measured foliation (even if we inappropriately call it
a measured foliation), where as the notation $(\F,\mu)$ means the actual measured foliation, not just the equivalence
class it determines.  We also write $\I(\alpha,\mu)$ to denote $\I(\alpha,(\F,\mu))$ when convenient.

A measured foliation $\mu \in \MF(S,{\bf z})$ is \textbf{orientable} if it has a representative which is transversely
orientable.  We remark that any transversely orientable foliation is Whitehead equivalent to one which is not
transversely orientable.  The reason is that a Whitehead move can turn an even prong singularity into a pair of odd
prong singularities.

We define a \textbf{saddle connection} of a foliation $\F$ to be the image of a path $\epsilon:I \to S$, defined on a
compact interval $I$, with the following properties:
\begin{itemize}
\item $\epsilon$ is injective and tangent to $\F$ on the interior of $I$,
\item $\epsilon$ maps the interior disjoint from $\sing(\F) \cup {\bf z}$, and
\item $\epsilon$ maps the endpoints of $I$ into $\sing(\F) \cup {\bf z}$.
\end{itemize}
A \textbf{leaf cycle} is an embedded loop or an embedded path connecting points of ${\bf z}$ which is a concatenation
of saddle connections.  A leaf cycle which is a loop is called a \textbf{closed leaf cycle}.

A measured foliation $(\F,\mu)$ is \textbf{arational} if $\F$ has no leaf cycle.  The existence of a leaf cycle is not
changed by Whitehead moves, and so we may say $\mu$ is arational if any representative is.  An equivalent formulation
is that a measured foliation $(\F,\mu)$ is arational if
$$\I(\alpha,(\F,\mu)) > 0 \mbox{ for every } \alpha \in \C^0(S,{\bf z}).$$

If a measured foliation $(\F,\mu)$ on $(S,{\bf z})$ has no $1$--prong singularities, then it determines points in both
$\MF(S,{\bf z})$ and $\MF(S) = \MF(S,\emptyset)$.  We write $\mu$ and $\pi(\mu)$ for these respective points.

\begin{lemma} \label{arationalprecriterion}
If $(\F,\mu)$ has no $1$--prong singularities and $\pi(\mu)$ is arational, then $\mu$ is arational if and only if it
has no leaf cycle connecting two distinct points $z_i,z_j \in {\bf z}$.
\end{lemma}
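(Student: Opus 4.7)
The plan is to dispose of the forward direction by unwinding the definition, then prove the converse by a projection argument that pushes a hypothetical leaf cycle on $(S,{\bf z})$ down to an obstruction on $S$. The forward direction is tautological: if $\mu$ is arational, then $\F$ has no leaf cycle at all, so in particular no leaf cycle joins two distinct marked points, and only the converse requires work.

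For the converse, I would assume the hypotheses and suppose for contradiction that $\mu$ is not arational, so $\F$ carries a leaf cycle $\gamma$. Since $\gamma$ does not connect two distinct marked points, the definition leaves only two possibilities: either (i) $\gamma$ is an embedded closed loop in $S$ assembled from saddle connections, or (ii) $\gamma$ is an embedded arc from some $z_i$ back to itself. Next I would translate $\gamma$ into a structure on $\pi(\F)$ by inspecting the endpoints of its constituent saddle connections. Non-marked singularities of $\F$ and marked points that are $p$-prong with $p\geq 3$ persist as singularities of $\pi(\F)$. By contrast, a marked point which is a $2$-prong singularity of $\F$, or a regular point of $\F$, becomes a regular point of $\pi(\F)$; at any such point the two adjacent saddle connections (or the two ends of a single saddle loop when $k=1$ in case (ii)) approach along opposite half-leaves---this is exactly what $2$-prong means, and it holds trivially at a regular point---so they coalesce into a single smooth leaf segment of $\pi(\F)$. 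After carrying out this coalescence at every vanished endpoint, $\gamma$ becomes either a nontrivial concatenation of saddle connections of $\pi(\F)$---a leaf cycle of $\pi(\F)$---or, if every endpoint is absorbed, an embedded smooth closed leaf of $\pi(\F)$.

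The leaf-cycle alternative contradicts arationality of $\pi(\mu)$ on the nose. In the closed-leaf alternative I would invoke Euler--Poincar\'e to show the resulting curve is essential in $\C^0(S)$: if it bounded a disk $D$ in $S$, the sum of prong indices of singularities of $\pi(\F)$ inside $D$ would equal $\chi(D)=1$, but by the no-$1$-prong hypothesis every singularity of $\pi(\F)$ is $p$-prong with $p\geq 3$ (non-marked singularities are always $\geq 3$-prong, and any marked point still singular after $\pi$ was already $\geq 3$-prong), so each contribution $1-p/2$ is at most $-1/2$ and the sum is nonpositive. Hence $\gamma$ is essential as a simple closed curve on $S$, so $\I([\gamma],\pi(\mu))=0$, again contradicting arationality of $\pi(\mu)$. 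The main obstacle I anticipate is precisely the bookkeeping at the coalescence step---checking that at every $2$-prong or regular marked point the two incoming saddle connections of $\gamma$ genuinely lie on opposite half-leaves, so that coalescence is smooth and the resulting object is either an honest leaf cycle or an honest smooth closed leaf of $\pi(\F)$; once that is in hand, the two alternatives both land in the hypothesized arationality of $\pi(\mu)$.
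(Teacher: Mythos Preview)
Your proof is correct and follows the same overall strategy as the paper: the forward direction is trivial, and for the converse you argue the contrapositive by taking a leaf cycle $\gamma$ on $(S,{\bf z})$ and analyzing what it becomes on $S$. Your coalescence argument at $2$--prong or regular marked points is exactly what the paper summarizes with the phrase ``one can check that one of the following must happen,'' and your trichotomy (leaf cycle of $\pi(\F)$, closed leaf of $\pi(\F)$, or non-closed leaf cycle) matches the paper's cases (1), (2), (3). Your case (ii), an arc from $z_i$ to itself, is in fact subsumed by the embedded-loop case under the paper's definitions, so it is redundant but harmless.

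The one genuine difference is how you dispose of the closed-leaf alternative. The paper observes that a closed leaf of $(\F,\pi(\mu))$ lies in a maximal foliated cylinder whose boundary contains a closed leaf cycle, which directly contradicts arationality of $\pi(\mu)$. You instead use the Euler--Poincar\'e formula to show the closed leaf cannot bound a disk (since all singularities of $\pi(\F)$ have index $1-p/2\le -1/2$), conclude it is essential, and then invoke the intersection-number characterization $\I([\gamma],\pi(\mu))=0$. Both arguments are valid; yours is more self-contained and avoids citing the cylinder structure, while the paper's is shorter if one takes that structure as known.
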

\begin{proof}
To distinguish whether we are viewing the foliation $\F$ on $S$ or $(S,{\bf z})$ we write $(\F,\pi(\mu))$ and
$(\F,\mu)$, respectively.  The only minor subtlety involved in the proof is that a saddle connection for $(\F,\mu)$
which has at least one endpoint in ${\bf z}$ is not necessarily a saddle connection for $(\F,\pi(\mu))$.

If $\mu$ is arational, then there are no leaf cycles, by definition.  In particular, there are no leaf cycles
connecting $z_i$ to $z_j$ for any $i \neq j$.

We prove the other implication by proving the contrapositive.  Suppose that $\mu$ is not arational so that there exists a leaf cycle $\gamma$.  Because $\gamma$ is embedded, one can check that one of the following must happen: (1) $\gamma$ is a closed leaf cycle for $(\F,\pi(\mu))$, (2) $\gamma$ is a closed leaf of $(\F,\pi(\mu))$, or (3) $\gamma$ is a non-closed leaf cycle.  Case (1) cannot happen since we are assuming that $(\F,\pi(\mu))$ is arational.  Likewise, case (2) implies that there is a cylinder, the boundary of which contains a closed leaf cycle, again ruled out by arationality of $(\F,\pi(\mu))$.  It follows that case (3) must occur, which is the desired conclusion.
\end{proof}

The group $\Mod(S,{\bf z})$ acts on $\MF(S,{\bf z})$ and this can be most easily described via the change in
intersection numbers. Specifically, if $\alpha$ is the isotopy class of a closed curve or arc, $\mu \in \MF(S,{\bf
z})$, and $\hat \phi \in \Mod(S,{\bf z})$ then
$$\I(\alpha, \hat \phi \cdot \mu) = \I(\hat \phi^{-1} (\alpha), \mu).$$
That is, the action should preserve geometric intersection number.

It is also convenient to have the space $\PMF(S,{\bf z})$ of \textbf{projective measured foliations}.  This is the
quotient of $\MF(S,{\bf z})$ by the action of $\mathbb R_+$ by scaling the transverse measure.  The action of
$\Mod(S,{\bf z})$ on $\MF(S,{\bf z})$ descends to an action on $\PMF(S,{\bf z})$.  An element of $\PMF(S,{\bf z})$ is
arational if and only if any (equivalently, every) of its preimages is.

If $\hat \phi \in \Mod(S,{\bf z})$ is pseudo-Anosov we let $\mathbb P(\mu_s),\mathbb P(\mu_u) \in \PMF(S,{\bf z})$
denote the \textbf{stable and unstable projective measured foliations} of $\phi$.   These are attracting and repelling
fixed points, respectively.  That is, on $\PMF(S,{\bf z}) - \{ \mathbb P(\mu_u) \}$, iteration of $\hat \phi$ converges
uniformly on compact sets to the constant map with value $\mathbb P(\mu_s)$.   Inverting $\hat \phi$ we obtain the same
dynamics after interchanging $\mathbb P(\mu_s)$ and $\mathbb P(\mu_u)$.  We also call $\mu_s$ and $\mu_u$ the stable
and unstable measured foliations of $\hat \phi$, though they are only well defined up the action of $\mathbb R_+$.

\begin{lemma} \label{nonorientstable} If $\hat \phi \in \B(S,{\bf z})$ is pseudo-Anosov and $\mu_s$ is its stable foliation, then any representative of $\mu_s$ has a $1$--prong singularity.
\end{lemma}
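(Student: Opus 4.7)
The plan is to argue by contradiction: assume some representative $(\F,\mu)$ of $\mu_s$ has no $1$--prong singularities, and exploit the fact that $\hat\phi \in \B(S,{\bf z})$ projects to the identity in $\Mod(S)$ to force the pseudo-Anosov scaling to descend to an impossible identity on $\MF(S)$.

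Granting the assumption, the excerpt's discussion of the forgetful operation gives a well-defined class $\pi(\mu_s) \in \MF(S)$. I would then pick a diffeomorphism representative $\phi \in \Diff^+(S,{\bf z})$ of $\hat\phi$. Since $\phi$ is a diffeomorphism fixing ${\bf z}$ pointwise, push-forward preserves prong numbers, so $\phi_\ast(\F,\mu)$ is again a measured foliation on $(S,{\bf z})$ without $1$--prong singularities. Its class in $\MF(S,{\bf z})$ is $\hat\phi \cdot \mu_s = \lambda^{-1}\mu_s$, where $\lambda > 1$ is the dilatation, and hence $\pi(\phi_\ast(\F,\mu)) = \lambda^{-1}\pi(\mu_s)$ because the forgetful map commutes with scaling of the transverse measure. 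On the other hand, $\hat\phi \in \B(S,{\bf z})$ means $\phi$ lies in $\Diff_{0,{\bf z}}(S,{\bf z}) \subset \Diff_0(S)$, and so is isotopic to the identity through diffeomorphisms of $S$ (ignoring the marked points). Consequently $(\F,\mu)$ and $\phi_\ast(\F,\mu)$ are equivalent as measured foliations on $S$, giving $\pi(\phi_\ast(\F,\mu)) = \pi(\mu_s)$. Equating the two computations yields $\lambda^{-1}\pi(\mu_s) = \pi(\mu_s)$ in $\MF(S)$, which is impossible: transverse measures of elements of $\MF(S)$ have full support, so no such class is fixed by a nontrivial positive scaling.

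The step I expect to require the most care is the well-definedness of the forgetful map $\pi$ on the subset of $\MF(S,{\bf z})$ admitting a representative with no $1$--prong singularity at ${\bf z}$. One needs that the equivalence relation on $(S,{\bf z})$ generated by isotopy fixing ${\bf z}$ and Whitehead moves descends to equivalence in $\MF(S)$; the prong hypothesis is exactly what ensures that such a representative is a legitimate measured foliation on $S$ and that Whitehead moves at marked points of prong number at least $2$ remain legal Whitehead moves after forgetting the marking. Once this is verified, both the commutativity of $\pi$ with scaling and the equivariance with respect to the Birman sequence follow directly from the definitions.
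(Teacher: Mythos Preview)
Your argument is correct. The well-definedness of $\pi$ at the level of $\MF(S,{\bf z})$--classes is, as you anticipate, the one point requiring care; a clean way to see it is via intersection numbers. For any $\alpha\in\C^0(S)$ and any measured foliation $(\F,\mu)$ without $1$--prongs, $\I_S(\alpha,(\F,\mu))$ equals the minimum of $\I_{(S,{\bf z})}(\tilde\alpha,(\F,\mu))$ over the $(S,{\bf z})$--isotopy classes $\tilde\alpha$ lying over $\alpha$, since any $S$--representative of $\alpha$ can be perturbed off the finite set ${\bf z}$ without changing its total variation against $\mu$. This minimum depends only on the $\MF(S,{\bf z})$--class, so $\pi$ descends and commutes with scaling. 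The remaining steps (that $\phi_*(\F,\mu)$ represents $\lambda^{-1}\mu_s$, and that $\phi\in\Diff_0(S)$ forces $\pi(\phi_*(\F,\mu))=\pi((\F,\mu))$) are immediate.

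The paper's own proof takes a somewhat different route. Rather than working with measure classes, it fixes an actual pseudo-Anosov homeomorphism $\phi_0$ representing $\hat\phi$ together with its transverse pair of invariant foliations $(\F_s,\mu_s)$ and $(\F_u,\mu_u)$. If $\F_s$ has no $1$--prong then neither does $\F_u$ (they share singularities), so $\phi_0$ is a genuine pseudo-Anosov homeomorphism of the closed surface $S$ --- impossible since $\phi_0\in\Diff_0(S)$. The extension to an arbitrary representative $(\F_s',\mu_s')$ is then handled by a separate observation: $\F_s$ has no saddle connections, so $\F_s$ is obtained from any $\F_s'$ by isotopy and \emph{collapsing} Whitehead moves only, and such moves do not produce $1$--prongs from foliations that lack them; this reduces the general case back to the specific $\F_s$. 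Your approach trades this two-step structure for the well-definedness check, and uses only the eigen-property $\hat\phi\cdot\mu_s=\lambda^{-1}\mu_s$ rather than the full local pseudo-Anosov data on both foliations. The paper's version is more hands-on but avoids any discussion of how $\pi$ interacts with the equivalence relation.
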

\begin{proof}
Let $\phi_0:(S,{\bf z}) \to (S,{\bf z})$ be a representative \textit{pseudo-Anosov homeomorphism} of $\phi$ with stable
and unstable measured foliations $(\F_s,\mu_s)$ and $(\F_u,\mu_u)$; see \cite{FLP}.  Suppose $\F_s$ has no $1$--prong
singularity. By transversality $\F_u$ does not have one either.  It follows that we can forget ${\bf z}$ and $\phi_0:S
\to S$ is still a pseudo-Anosov homeomorphism.  Therefore, the class in $\Diff^+(S)$ determined by $\phi_0$ is
pseudo-Anosov. However, $\hat \phi \in \B(S,{\bf z})$ means that any representative lies in $\Diff_0(S)$ and so is
trivial in $\Mod(S)$, and cannot be pseudo-Anosov on $S$.

Note that if $(\F_s',\mu_s')$ is any other representative of the class $\mu_s$, then $\F_s$ is obtained from $\F_s'$ by
isotopy and \textit{collapsing} Whitehead moves only.  This is because $\F_s$ can have no saddle connections.  If
$\F_s'$ had a $1$--prong singularity, then $\F_s$ must have also had a $1$--prong singularity.
\end{proof}

\subsection{Teichm\"uller space and holomorphic $1$--forms} \label{abeliansection}

To discuss the space of holomorphic $1$--forms, which is the primary space of interest for us, we first recall some
facts about Teichm\"uller space.  The space of complex structures on $S$, compatible with the smooth structure and
orientation, is denoted $\H(S)$. The group $\Diff^+(S)$ acts on $\H(S)$ \textit{on the right} by pulling back complex
structures, and the Teichm\"uller space of $S$ is the quotient by the action of the subgroup $\Diff_0(S)$
$$\T(S) = \H(S)/\Diff_0(S).$$
The action on the fibers of the map $\H(S) \to \T(S)$ is simply transitive giving $\H(S)$ the structure of a principal
$\Diff_0(S)$--bundle \cite{earleeells}.  Contractibility of $\T(S)$ implies
$$\H(S) \cong \T(S) \times \Diff_0(S).$$

Keeping track of the marked points ${\bf z} \subset S$ amounts to taking the quotient by the smaller group
$\Diff_0(S,{\bf z})$.  That is, the Teichm\"uller space of $(S,{\bf z})$ is
$$\T(S,{\bf z}) = \H(S) / \Diff_0(S,{\bf z}).$$
Combining this discussion with (\ref{E:product1}) we obtain
\begin{equation} \label{producteqn} \H(S) \cong \T(S)
\times \widetilde \Conf_n(S) \times \Diff_0(S,{\bf z}) \cong \T(S,{\bf z}) \times \Diff_0(S,{\bf z})
\end{equation}
and so
$$\T(S,{\bf z}) \cong \T(S) \times \widetilde \Conf_n(S).$$


\begin{remark}
In the case that ${\bf z}$ is a single marked point, Bers proved that the quotient map $\T(S,{\bf z}) \to \T(S)$ is a
holomorphic fibration \cite{bersfiber}.  Bers' Theorem holds in the more general situation that $S$ has finite type.
From this and an inductive argument, it follows that the fibration $\T(S,{\bf z}) \to \T(S)$ is holomorphic for any
finite set ${\bf z}$ (not just a single point), though we will not use this fact here.
\end{remark}

For each $X \in \H(S)$, we have the vector space of $1$--forms which are holomorphic with respect to $X$. This
determines a $g$--dimensional complex vector bundle over $\H(S)$ and we denote the bundle obtained from this by
removing the zero section by
$$\widetilde \Omega(S) = \{ (X,\omega) \, | \, X \in \H(S) \mbox{ and } \omega \mbox{ a holomorphic } 1\mbox{--form on } (S,X) \, \}.$$
We will refer to a point of $\widetilde \Omega(S)$ as $(X,\omega)$, or sometimes simply $\omega$ since the complex
structure $X$ is determined by the $1$--form $\omega$.  We let $\Zeros(\omega)$ denote the set of zeros of $\omega$.

We sometimes view $\omega \in \widetilde \Omega(S)$ as a \textit{translation structure} on $S$ (see
e.g.~\cite{eskinmasurzorich}).  This is a singular flat metric on $S$ with trivial holonomy and a preferred vertical
direction in each tangent space.  The singularities are isolated cone-type singularities occurring precisely at the
points of $\Zeros(\omega)$, and having cone angles in $2 \pi \mathbb Z$.  The metric (and notion of vertical) are
pulled back from $\mathbb C$ via \textit{natural coordinates} obtained by integrating $\omega$ over a sufficiently
small simply connected open neighborhood $U$ of a point $p_0$ in $S - \Zeros(\omega)$:
$$\zeta(p) = \int_{p_0}^p \omega.$$
We say that the natural coordinate $\zeta$ is based at $p_0$.  In the natural coordinates $\omega$ has the simple form
$\omega = d \zeta$.

The metric on $S$ associated to $\omega$ is locally $\text{CAT}(0)$.  Given $\alpha \in \C^0(S,{\bf z})$, there may
not be a geodesic representative in $S - \{{\bf z}\}$ as this surface is incomplete.  However, a sequence of representatives with lengths approaching
the infimum has a limit in $S$ (which may nontrivially intersect ${\bf z}$) by the Arzela--Ascoli Theorem.  This is a
geodesic, except possibly at points of ${\bf z}$ where incoming and outgoing geodesic segments can make an angle less
than $\pi$. We will refer to such curve as a geodesic representative for $\alpha$.

The right action of $\Diff^+(S)$ on $\H(S)$ naturally lifts to an action on $\widetilde \Omega(S)$.  This actions is equivalently the restriction of the action of $\Diff^+(S)$ on all $1$--forms
$$\omega \cdot \phi = \phi^{*}(\omega)$$
for $\phi \in \Diff^+(S)$ and $\omega \in \widetilde \Omega(S)$.  We consider two quotients
$$\Omega(S) = \widetilde \Omega(S) / \Diff_0(S) \quad \mbox{ and } \quad \Omega(S,{\bf z}) = \widetilde \Omega(S) / \Diff_0(S,{\bf z})$$
Equation (\ref{producteqn}) implies a product structure \begin{equation} \label{producteqn2} \Omega(S,{\bf z}) \cong
\Omega(S) \times \widetilde \Conf_n(S). \end{equation} Let
$$\pi: \Omega(S,{\bf z}) \to \Omega(S)$$
denote the projection.

\bigskip

Perhaps the most important point of what follows is the distinction between points of $\Omega(S,{\bf z})$ and of
$\Omega(S)$.   Given $\omega \in \widetilde \Omega(S)$ we will write $\hat \omega \in \Omega(S,{\bf z})$ and $\bar
\omega = \pi(\hat \omega) \in \Omega(S)$ for the associated points in the quotient spaces.

The right action of $\Diff^+(S)$ on $\widetilde \Omega(S)$ determines a left action in the usual way by defining
\begin{equation} \label{E:leftfromright}
\phi \cdot \omega = \phi^{-1 *}(\omega).
\end{equation}
This descends to left actions of $\Mod(S,{\bf z})$ and $\Mod(S)$ on
$\Omega(S,{\bf z})$ and $\Omega(S)$, respectively:
$$\hat \phi \cdot \hat \omega = \widehat{\phi^{-1 *} (\omega)} \quad \mbox{ and } \quad \bar \phi \cdot \bar \omega =
\overline{\phi^{-1 *} (\omega)}.$$

Let $\omega \in \widetilde \Omega(S)$ be any point.  We denote the fiber of $\pi$ over $\bar \omega$ by $F_{\bar
\omega} = \pi^{-1}(\bar \omega)$ and note that with respect to the product structure of (\ref{producteqn2}) we have
$$F_{\bar \omega} \cong \{ \hat \omega \} \times \widetilde \Conf_n(S).$$
From here we see the isomorphism (\ref{E:product1}) clearly; the action of $\B(S,{\bf z}) < \Mod(S,{\bf z})$ on
$\F_{\bar \omega}$ is by covering transformations.

\subsection{A neighborhood in the fiber} \label{movingsection}

We will frequently need to consider families of $1$--forms and not just their isotopy classes, and we use the
trivializations described in Section \ref{S:trivialization} to construct these. More precisely, consider any ${\bf
B}$--trivialization $f:S \times {\bf B} \to S$. Given $\omega \in \widetilde \Omega(S)$, $f$ determines a map we denote
$$f^\omega:{\bf B} \to \widetilde \Omega(S)$$
which is defined by
$$f^\omega({\bf b}) = f_{\bf b}^*\omega.$$
We can compose $f^\omega$ with the projections to both $\Omega(S,{\bf z})$ and $\Omega(S)$.  Since $f_{\bf b} \in
\Diff_0(S)$ for all ${\bf b} \in {\bf B}$, the latter map is simply the constant map with value $\bar \omega$. We are
primarily interested in the composition with the former projection, which we denote
$$\hat f^\omega: {\bf B} \to \Omega(S,{\bf z}).$$
The image of $\hat f^\omega$ lies in $F_{\bar \omega}$.  Since this is a local trivialization of the bundle
$$\Diff_0(S) \to \widetilde \Conf_n(S),$$
$\hat f^\omega$ maps onto a neighborhood of $\hat \omega = \hat f^\omega({\bf z})$ in $F_{\bar \omega}$.

\subsection{From $1$--forms to foliations} \label{S:1formfoliation}

An element $\omega \in \widetilde \Omega(S)$ determines a harmonic $1$--form, $\re(\omega)$, on $S$.  Let $\gamma_0$
denote any representative of a homotopy class $\gamma$ in $\Gamma(S)$ and $\omega \in \widetilde \Omega(S)$. Since
$\re(\omega)$ is harmonic, it is closed and hence the integral
$$\int_{\gamma_0} \re(\omega)$$
is independent of the choice of representative $\gamma_0$ of $\gamma$.

By definition of the left action of $\Diff^+(S)$ on $\widetilde \Omega(S)$, if $\phi \in \Diff^+(S)$ and $\omega \in
\widetilde \Omega(S)$, then
\begin{equation} \label{E:integral1}
\int_{\gamma_0} \re(\phi \cdot \omega) = \int_{\gamma_0} \phi^{-1 *}(\re(\omega)) = \int_{\phi^{-1}(\gamma_0)}
\re(\omega).
\end{equation}
If $\phi \in \Diff_0(S)$, then $\phi^{-1} \in \Diff_0(S)$, and so $\phi^{-1}(\gamma_0)$ also represents $\gamma$.
Therefore
$$\int_{\gamma_0} \re(\phi \cdot \omega) = \int_{\phi^{-1}(\gamma_0)} \re(\omega) = \int_{\gamma_0} \re(\omega).$$

It follows that we can well-define
$$\int_\gamma \re(\bar \omega) = \int_{\gamma_0} \re(\omega)$$
for any  $\gamma \in \Gamma(S)$ and $\bar \omega \in \Omega(S)$.

By the same reasoning, we can well-define
$$\int_\gamma \re(\hat \omega)$$
for any $\gamma \in \Gamma(S)$ or $\Gamma(S,z_i,z_j)$ and $\hat \omega \in \Omega(S,{\bf z})$, by picking arbitrarily
representatives of the relevant equivalence classes.  Furthermore, (\ref{E:integral1}) implies that the actions of
$\Mod(S,{\bf z})$ and $\Mod(S)$ satisfy
\begin{equation} \label{integration}
\int_{\bar \phi (\gamma)} \re(\bar \phi \cdot \bar \omega) = \int_\gamma \re(\bar \omega) \quad \mbox{ and } \quad
\int_{\hat \phi (\gamma)} \re(\hat \phi \cdot \hat \omega) = \int_\gamma \re(\hat \omega).
\end{equation}

A 1-form $\omega \in \widetilde \Omega(S)$ also determines a measured foliation on $S$.  The foliation is denoted
$\F(\re(\omega))$ as it is obtained by integrating the line field $\ker(\re(\omega))$. The measure $|\re(\omega)|$ is
obtained by integrating the absolute value of $\re(\omega)$.  Passing to the quotient by $\Diff_0(S,{\bf z})$ and
$\Diff_0(S)$ we obtain well-defined points $|\re(\hat \omega)| \in \MF(S,{\bf z})$ and $|\re(\bar \omega)| \in \MF(S)$,
respectively.

This determines a map
$$|\re|:\Omega(S,{\bf z}) \to \MF(S,{\bf z})$$
defined by
$$|\re|(\hat \omega) = |\re(\hat \omega)|.$$

\begin{lemma} \label{equivariant}
$|\re|$ is continuous and $\Mod(S,{\bf z})$--equivariant.
\end{lemma}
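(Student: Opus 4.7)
The plan is to use the fact, recalled in Section~\ref{foliationsection}, that the intersection-number embedding $\MF(S,{\bf z}) \hookrightarrow \mathbb{R}^{\C^{0}(S,{\bf z})}$ is a topological embedding, so both equivariance and continuity can be checked coordinate by coordinate against each $\alpha \in \C^{0}(S,{\bf z})$. The key computation is then the change-of-variables identity that already drove (\ref{E:integral1}): for any diffeomorphism $\phi$ and smooth representative $\alpha_{0}$ of $\alpha$,
\[
\int_{\alpha_{0}} |\phi^{-1\,*}\re(\omega)| \;=\; \int_{\phi^{-1}(\alpha_{0})} |\re(\omega)|.
\]

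For equivariance, I unwind the left action of $\Mod(S,{\bf z})$ on $\Omega(S,{\bf z})$ from (\ref{E:leftfromright}) to get $|\re|(\hat\phi \cdot \hat\omega) = |\phi^{-1\,*}\re(\omega)|$ as a class in $\MF(S,{\bf z})$. Applying the displayed identity and then the definition of the $\Mod(S,{\bf z})$-action on $\MF(S,{\bf z})$ via $\I(\alpha,\hat\phi \cdot \mu) = \I(\hat\phi^{-1}(\alpha),\mu)$ gives
\[
\I\bigl(\alpha, |\re|(\hat\phi \cdot \hat\omega)\bigr)
= \inf_{\beta_{0} \in \hat\phi^{-1}(\alpha)} \int_{\beta_{0}} |\re(\omega)|
= \I\bigl(\alpha,\hat\phi \cdot |\re(\hat\omega)|\bigr),
\]
so the two classes in $\MF(S,{\bf z})$ agree.

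For continuity, I work locally near a point $\hat\omega_{0} \in \Omega(S,{\bf z})$. Using the product decomposition (\ref{producteqn2}) and a ${\bf B}$--trivialization as in Section~\ref{S:trivialization}, a neighborhood of $\hat\omega_{0}$ is parametrized by a family $\omega = f_{\bf b}^{*}\omega_{0}$ with $\omega_{0} \in \widetilde\Omega(S)$ fixed and $f_{\bf b}$ depending smoothly on ${\bf b}$. In particular $\re(\omega)$ depends continuously on $\hat\omega$ in the $C^{\infty}$ topology on compact subsets of $S$. For a fixed smooth representative $\alpha_{0}$ of $\alpha$ this yields $\int_{\alpha_{0}} |\re(\omega)| \to \int_{\alpha_{0}} |\re(\omega_{0})|$ as $\hat\omega \to \hat\omega_{0}$. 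Choosing $\alpha_{0}$ so that $\int_{\alpha_{0}} |\re(\omega_{0})|$ is within $\epsilon$ of the infimum gives upper semicontinuity:
\[
\limsup_{\hat\omega \to \hat\omega_{0}} \I\bigl(\alpha,|\re(\hat\omega)|\bigr) \;\le\; \I\bigl(\alpha,|\re(\hat\omega_{0})|\bigr).
\]

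The main obstacle is lower semicontinuity. Here I plan to exploit the translation-structure viewpoint of $\omega$: $\I(\alpha,|\re(\hat\omega)|)$ is realized (in the Arzel\`a--Ascoli sense discussed just after Zeros$(\omega)$ is introduced) by a $\omega$-geodesic representative $\alpha_{\omega}$ of $\alpha$, whose total $|\omega|$-length is controlled by any fixed smooth representative's $|\omega|$-length, which in turn is bounded uniformly for $\hat\omega$ near $\hat\omega_{0}$. Given any sequence $\hat\omega_{n} \to \hat\omega_{0}$ with $\I(\alpha,|\re(\hat\omega_{n})|) \to \liminf$, the uniform $|\omega|$-length bound lets me pass to a subsequence and extract a limit curve $\alpha_{\infty}$; the extraction can be done in a single chart cover of $S$ on which the metrics $|\omega_{n}|$ converge to $|\omega_{0}|$ uniformly. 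The limit is a representative of $\alpha$ (the isotopy class is preserved since $\alpha_{\omega_{n}}$ meet ${\bf z}$ only at endpoints by the definition of $\C^{0}(S,{\bf z})$), and lower semicontinuity of arc length under uniform convergence of both the curves and the forms gives $\int_{\alpha_{\infty}} |\re(\omega_{0})| \le \liminf \int_{\alpha_{\omega_{n}}} |\re(\omega_{n})|$. This bounds $\I(\alpha,|\re(\hat\omega_{0})|)$ from above by the $\liminf$, completing the argument. If the geodesic representatives prove awkward at marked points, an alternative is to approximate $|\re(\omega_{0})|$ by weights on a carrying train track $\tau$, note that $\tau$ carries $|\re(\omega)|$ for all $\hat\omega$ sufficiently close with weights depending continuously on $\hat\omega$, and use that $\alpha$'s intersection with a $\tau$-carried foliation is a continuous piecewise-linear function of the weights.
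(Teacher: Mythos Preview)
Your equivariance argument is essentially identical to the paper's: both reduce to the coordinate-by-coordinate check against each $\alpha \in \C^{0}(S,{\bf z})$ via the change-of-variables identity.

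For continuity, the paper gives only a one-sentence sketch (citing \cite{hubbardmasur}): geodesic representatives of $\alpha$ vary continuously with $\omega$, and since they realize $\I(\alpha,|\re(\omega)|)$, continuity follows. You instead attempt a full upper/lower semicontinuity argument, which is more work but in the same spirit. Two points deserve attention. First, your local parametrization $\omega = f_{\bf b}^{*}\omega_{0}$ with $\omega_{0}$ \emph{fixed} only sweeps out a neighborhood in the fiber $F_{\bar\omega_{0}}$, not a full neighborhood in $\Omega(S,{\bf z})$; you must also let the $\Omega(S)$-coordinate vary (e.g.\ via a local section of $\widetilde\Omega(S)\to\Omega(S,{\bf z})$). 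Second, the justification ``the isotopy class is preserved since $\alpha_{\omega_{n}}$ meet ${\bf z}$ only at endpoints'' is misstated: elements of $\C^{0}(S,{\bf z})$ are closed curves with no endpoints, and as the paper itself notes in Section~\ref{abeliansection}, the geodesic representative of $\alpha$ may well pass through points of ${\bf z}$. So the limit curve $\alpha_{\infty}$ need not lie in $S-{\bf z}$, and arguing that it (or a small perturbation of it) represents $\alpha$ takes more care than you indicate. Your train-track fallback is in fact the cleaner way to finish, and is standard; the paper's appeal to continuous variation of geodesic representatives is really a pointer to exactly this circle of ideas rather than a self-contained argument.
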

\begin{proof}
Continuity is well known (cf.~\cite{hubbardmasur}).  The idea is that given $\alpha \in \C^0(S,{\bf z})$, as $\omega
\in \widetilde \Omega(S)$ varies, the geodesic representatives vary continuously.  Since the geodesic representatives
realize $\I(\alpha,|\re(\omega)|)$, it easily follows that this quantity varies continuously, proving continuity of
$|\re|$.

To see the equivariance, we need only compare the various definitions.  Fixing $\hat \phi
\in \Mod(S,{\bf z})$ and $\hat \omega \in \Omega(S,{\bf z})$, we must show
$$\hat \phi \cdot |\re(\hat \omega)| = |\re(\hat \phi \cdot \hat \omega)|.$$
The action on $\MF(S,{\bf z})$ is determined by the action on $\C^0(S,{\bf z})$ via intersection numbers according to the equation
$$\I(\alpha,\hat \phi \cdot |\re(\hat \omega)|) = \I(\hat \phi^{-1}(\alpha),|\re(\hat \omega)|)$$
for every $\alpha \in \C^0(S,{\bf z})$. Therefore, we fix any $\alpha \in \C^0(S,{\bf z})$ and we must prove
$$\I(\hat \phi^{-1}(\alpha),|\re(\hat \omega)|) = \I(\alpha,|\re(\hat \phi \cdot \hat \omega)|).$$

Arbitrarily orienting $\alpha$ (i.e. coherently orienting all representatives of $\alpha$), and picking any
representative $\phi$ of $\hat \phi$ we obtain
\begin{align*}
 \I(\alpha,|\re(\hat \phi \cdot \hat \omega)|) &=  \I(\alpha,|\re(\widehat{\phi \cdot \omega})|)  = \inf_{\alpha_0 \in \alpha} \int_{\alpha_0} |\re(\phi \cdot \omega)|\\
  &=  \inf_{\alpha_0 \in \alpha} \int_{\alpha_0} |\phi^{-1*} (\re(\omega))|  =  \inf_{\alpha_0 \in \alpha} \int_{\alpha_0} \phi^{-1*} |\re(\omega)|\\
  &=  \inf_{\alpha_0 \in \alpha} \int_{\phi^{-1}(\alpha_0)} |\re(\omega)| = \inf_{\beta_0 \in \hat \phi^{-1}(\alpha)}  \int_{\beta_0} |\re(\omega)|\\
  &=  \I(\hat \phi^{-1}(\alpha),|\re(\hat \omega)|)
  \end{align*}
This proves equivariance and completes the proof of the lemma.
\end{proof}

\section{Periods and arationality}

Given $\hat \omega \in \Omega(S,{\bf z})$, we define the \textbf{periods of $\hat \omega$} by
$$\Per(\hat \omega) = \left\{ \int_\alpha \re(\hat \omega)  \, \big| \, \forall \, \alpha \in \Gamma(S) \right\}.$$
For each $i \neq j$ between $1$ and $n$, we define the \textbf{$ij$--relative periods
of $\hat \omega$} by
$$\Per_{ij}(\hat \omega) = \left\{ \int_\alpha \re(\hat \omega) \, \big| \, \forall \, \alpha \in \Gamma(S,z_i,z_j) \right\}.$$
We note that $\Per(\hat \omega)$ depends only on $\pi(\hat \omega) = \bar \omega$, whereas $\Per_{ij}(\hat \omega)$
actually depends on $\hat \omega$.

Our interest in the periods and relative periods comes from the following.
\begin{proposition}\label{arationalcriterion}
Suppose $\omega \in \widetilde \Omega(S)$, $|\re(\bar \omega)| \in \AF(S)$, and for every $i \neq j$ we have
$$\Per_{ij}(\hat \omega) \not \subset \Per(\hat \omega).$$
Then $|\re(\hat \omega)| \in \AF(S,{\bf z})$.
\end{proposition}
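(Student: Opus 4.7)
My plan is to deduce the statement from Lemma \ref{arationalprecriterion}, so I would first verify that lemma's hypotheses for $\mu := |\re(\hat\omega)|$. The foliation $\F(\re(\omega))$ has no $1$--prong singularities: in natural coordinates $\omega = d\zeta$ the foliation is locally the horizontal foliation of $\mathbb{C}$ away from $\Zeros(\omega)$, while at a zero of order $k \ge 1$ the local form $\omega = z^k\,dz$ produces a $2(k{+}1)$--prong singularity, so no prong count below $4$ ever arises on $S$, and marked points that are not zeros of $\omega$ are simply regular points of $\F(\re(\omega))$. Since $\pi(\mu) = |\re(\bar\omega)| \in \AF(S)$ by hypothesis, the lemma reduces the problem to showing that $\F(\re(\omega))$ admits no leaf cycle connecting two distinct points of $\mathbf{z}$.

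I would then argue by contradiction. Suppose $\gamma$ is a leaf cycle from $z_i$ to $z_j$ for some $i \neq j$. Writing $\gamma$ as a concatenation of saddle connections and noting that each saddle connection is tangent to $\F(\re(\omega)) = \ker(\re(\omega))$ on its interior, we get $\int_\gamma \re(\hat\omega) = 0$. For any $\alpha \in \Gamma(S,z_i,z_j)$, the concatenation $\alpha \cdot \gamma^{-1}$ is a loop at $z_i$ that represents some class in $\Gamma(S)$, so by additivity of the integral of the closed form $\re(\omega)$,
\[ \int_\alpha \re(\hat\omega) \;=\; \int_{\alpha\cdot\gamma^{-1}} \re(\hat\omega) \;+\; \int_\gamma \re(\hat\omega) \;=\; \int_{\alpha\cdot\gamma^{-1}} \re(\hat\omega) \;\in\; \Per(\hat\omega). \]
Since $\alpha$ was arbitrary in $\Gamma(S,z_i,z_j)$, this shows $\Per_{ij}(\hat\omega) \subset \Per(\hat\omega)$, contradicting the hypothesis.

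The proof is essentially this single calculation, so I do not anticipate a genuine obstacle. The step that most warrants care is the identity $\int_\gamma \re(\hat\omega) = 0$: one must invoke that $\re(\omega)$ vanishes on tangent vectors to leaves of $\F(\re(\omega))$, and observe that the integral over each saddle connection remains well-defined at endpoints lying in $\Zeros(\omega) \cup \mathbf{z}$ because $\re(\omega)$ is a smooth closed form on all of $S$. The only other bookkeeping is keeping $\hat\omega$ and $\bar\omega$ distinct and confirming that the loop integral $\int_{\alpha\cdot\gamma^{-1}} \re(\hat\omega)$ depends only on the free homotopy class in $\Gamma(S)$, which is immediate for closed forms.
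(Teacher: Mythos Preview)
Your proposal is correct and follows essentially the same route as the paper: reduce to Lemma~\ref{arationalprecriterion}, then derive a contradiction by concatenating an arbitrary path from $z_i$ to $z_j$ with a hypothetical leaf cycle to show $\Per_{ij}(\hat\omega)\subset\Per(\hat\omega)$. The only difference is that you explicitly verify the absence of $1$--prong singularities for $\F(\re(\omega))$, which the paper takes for granted since holomorphic $1$--forms have only even-prong singularities.
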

\begin{proof}
We will apply Lemma \ref{arationalprecriterion} and therefore need only check that for every $i \neq j$ the points
$z_i$ and $z_j$ are not connected by a leaf cycle of $\F(\ker(\re(\omega)))$.

Suppose on the contrary that there is a leaf cycle $\delta$ with endpoints $z_i$ and $z_j$. If $\epsilon$ is any path
from $z_i$ to $z_j$, then we can build a closed curve $\alpha = \delta \cup \epsilon$ by concatenating these two paths.
Because $\delta$ is a leaf cycle, the integral of $\re(\omega)$ over $\delta$ is zero and so
$$\int_\alpha \re(\omega) = \int_\delta \re(\omega)  + \int_\epsilon \re(\omega) = \int_\epsilon \re(\omega).$$

This implies $\Per_{ij}(\omega) \subset \Per(\omega)$ which is a contradiction.
\end{proof}

The following subspace of $\Omega$ is needed for technical reasons (see Section \ref{pathsinmfsection}).  Define the subspace $\widetilde \Omega_*(S,{\bf z}) \subset \widetilde \Omega(S)$ to be
$$\widetilde \Omega_*(S,{\bf z}) = \{ \omega \, | \, \Zeros(\omega) \cap {\bf z} = \emptyset \}$$
The group $\Diff_0(S,{\bf z})$ leaves $\widetilde \Omega_*(S,{\bf z})$ invariant and we let $\Omega_*(S,{\bf z})$ denote the
image in $\Omega(S,{\bf z})$.

\begin{lemma} \label{codimension2}
$\Omega_*(S,{\bf z})$ is path-connected and dense in $\Omega(S,{\bf z})$.
\end{lemma}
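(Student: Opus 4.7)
The plan is to write $Z := \Omega(S,{\bf z}) \setminus \Omega_*(S,{\bf z}) = \bigcup_{i=1}^n Z_i$, where $Z_i = \{\hat\omega : z_i \in \Zeros(\omega)\}$, and then to prove density directly using the local trivializations of Section~\ref{S:trivialization}, and path-connectedness by showing that $Z$ is a locally finite union of real codimension-$2$ subvarieties of the connected manifold $\Omega(S,{\bf z})$. Note that each $Z_i$ is well-defined on equivalence classes because every element of $\Diff_0(S,{\bf z})$ fixes each $z_i$ pointwise.

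\textbf{Density.} For $\hat\omega \in \Omega(S,{\bf z})$, I would fix a representative $\omega \in \widetilde\Omega(S)$ and choose a ${\bf B}$-trivialization $f:S \times {\bf B} \to S$ around ${\bf z}$. By Section~\ref{movingsection}, the map $\hat f^\omega:{\bf B} \to F_{\bar\omega}$ sends ${\bf b}$ to $\widehat{f_{\bf b}^*\omega}$ and surjects onto a neighborhood of $\hat\omega$ in its fiber. Since $\Zeros(f_{\bf b}^*\omega) = f_{\bf b}^{-1}(\Zeros(\omega))$, the point $z_i$ is a zero of $f_{\bf b}^*\omega$ precisely when $b_i = f_{\bf b}(z_i) \in \Zeros(\omega)$. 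As $\Zeros(\omega)$ is finite and each $b_i$ ranges freely over an open disk, I can pick ${\bf b}$ arbitrarily close to ${\bf z}$ with every $b_i \notin \Zeros(\omega)$, producing a sequence in $\Omega_*(S,{\bf z})$ converging to $\hat\omega$. Together with the analogous perturbation inside any open set meeting $F_{\bar\omega}$, this gives density in $\Omega(S,{\bf z})$.

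\textbf{Path-connectedness, and the main obstacle.} The product decomposition (\ref{producteqn2}) exhibits $\Omega(S,{\bf z}) \cong \Omega(S) \times \widetilde\Conf_n(S)$ as a connected manifold of real dimension at least $10$, with $\Omega(S)$ the Hodge bundle (minus its zero section) over $\T(S)$. Locally near a point of $Z_i$, the condition $z_i \in \Zeros(\omega)$ is cut out by a single complex equation---the value of the moving $1$-form at the moving point representing $z_i$---so $Z_i$ is a complex analytic hypersurface and $Z$ is a closed subset of real codimension at least $2$. A standard transversality argument then joins any two points of $\Omega_*(S,{\bf z})$ by a smooth path transverse to (and hence disjoint from) $Z$. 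The one delicate step, which I expect to be the main technical obstacle, is making rigorous the local ``evaluation of the moving $\omega$ at the moving point representing $z_i$'': this requires assembling the trivializations of Section~\ref{S:trivialization} together with the universal family of Riemann surfaces over $\T(S)$ into a coherent local picture in which a holomorphic $1$-form can be evaluated at a holomorphically varying point, with some extra bookkeeping needed when $\omega$ has a zero of order greater than one at $z_i$. Once this is set up the codimension count and the transversality argument are routine.
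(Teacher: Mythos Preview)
Your proposal is correct and follows the same overall strategy as the paper: write the complement as a union $Z = \bigcup_i Z_i$ and argue that each $Z_i$ has real codimension $2$ in the connected manifold $\Omega(S,{\bf z})$, so its removal leaves a dense path-connected set. Your density argument via the ${\bf B}$--trivializations is more explicit than the paper's, which simply folds density into the codimension statement.

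The main difference is that you work harder than necessary on the codimension count. You aim to exhibit $Z_i$ locally as the zero locus of a holomorphic evaluation map, and you correctly flag that this requires care when $\omega$ has a higher-order zero at $z_i$. The paper avoids this entirely: it only uses that the map $\omega \mapsto \Zeros(\omega)$, valued in the $(2g-2)$--fold symmetric product of $S$, is \emph{continuous}. Since for each fixed $\omega$ the set $\Zeros(\omega)$ is finite (real codimension $2$ in $S$), and since this set moves continuously with $\omega$, the locus $\{z_i \in \Zeros(\omega)\}$ has real codimension $2$ in $\widetilde\Omega(S)$ without any appeal to holomorphic structure or any case analysis on the order of the zero. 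So your ``main technical obstacle'' is an artifact of seeking a complex-analytic description where a topological one suffices; once you drop that ambition, the bookkeeping you anticipate for higher-order zeros disappears.
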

\begin{proof}
The space $\Omega(S,{\bf z})$ is the complement of the zero section of a complex vector bundle over the Teichm\"uller
space.  Since Teichm\"uller space is path-connected, so is $\Omega(S,{\bf z})$.  The space $\Omega_*(S,{\bf z})$ is the
complement of a subspace with real--codimension $2$:  the subspace $\{z_i \in \Zeros(\omega)\}$ is codimension 2 since
for any fixed $\omega \in \widetilde \Omega(S)$, $z_i$ and $\Zeros(\omega)$ are both zero-dimensional, and the zeros
(considered as a function from $\widetilde \Omega(S)$ to the $(2g-2)$--fold product $S \times ... \times S$, modulo the
action by the symmetric group) vary continuously.  It follows that $\Omega_*(S,{\bf z})$ is dense and path-connected.
\end{proof}

We now come to a more interesting subspace.  Define
$$\widetilde \Omega_\best(S,{\bf z}) = \left\{ \, \omega \in \widetilde \Omega_*(S,{\bf z}) \, \left| \, \begin{array}{l} |\re(\bar \omega)| \mbox{ is arational, and for every }\\ i \neq j \, , \, \, \Per_{ij}(\hat \omega) \not \subset \Per(\hat \omega) \end{array} \right. \, \right\}.$$
By construction $\widetilde \Omega_\best(S,{\bf z})$ is invariant by $\Diff_0(S,{\bf z})$ and we define
$\Omega_\best(S,{\bf z})$ to be the image in $\Omega_*(S,{\bf z})$.

\begin{proposition} \label{bestarational}
$|\re|(\Omega_\best(S,{\bf z})) \subset \AF(S,{\bf z})$.
\end{proposition}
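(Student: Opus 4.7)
The plan is to observe that this proposition is essentially an immediate repackaging of Proposition \ref{arationalcriterion}, so the proof reduces to unpacking definitions. Given $\hat \omega \in \Omega_\best(S,{\bf z})$, first I would choose a lift $\omega \in \widetilde \Omega_\best(S,{\bf z})$, which exists by the very definition of $\Omega_\best(S,{\bf z})$ as the image of $\widetilde \Omega_\best(S,{\bf z})$ in $\Omega_*(S,{\bf z})$. By the definition of $\widetilde \Omega_\best(S,{\bf z})$, this $\omega$ simultaneously satisfies $|\re(\bar \omega)| \in \AF(S)$ and $\Per_{ij}(\hat \omega) \not\subset \Per(\hat \omega)$ for every $i \neq j$.

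These are precisely the hypotheses of Proposition \ref{arationalcriterion}, which then immediately yields $|\re(\hat \omega)| \in \AF(S,{\bf z})$. Since the map $|\re|$ was defined by $|\re|(\hat \omega) = |\re(\hat \omega)|$, we conclude $|\re|(\hat \omega) \in \AF(S,{\bf z})$, establishing the desired inclusion.

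The one point worth flagging, rather than a genuine obstacle, is independence of the lift. The quantity $|\re(\bar \omega)|$ is already defined on $\Omega(S)$, and $\Per(\hat \omega)$, $\Per_{ij}(\hat \omega)$ are defined on $\Omega(S,{\bf z})$ and depend only on $\hat \omega$ (as noted in the paragraph following the definition of periods). Equivalently, the $\Diff_0(S,{\bf z})$-invariance of $\widetilde \Omega_\best(S,{\bf z})$ noted in the text guarantees that all lifts of $\hat \omega$ lie in $\widetilde \Omega_\best(S,{\bf z})$, so the choice of $\omega$ is immaterial. There is no real combinatorial or analytic work beyond citing Proposition \ref{arationalcriterion}.
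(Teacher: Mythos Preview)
Your proof is correct and matches the paper's own argument, which simply states that the proposition is immediate from Proposition \ref{arationalcriterion} and the definition of $\Omega_\best(S,{\bf z})$. Your additional remark on independence of the lift is a harmless elaboration of a point the paper leaves implicit.
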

\begin{proof}  This is immediate from Proposition \ref{arationalcriterion} and the definition of $\Omega_\best(S,{\bf z})$.
\end{proof}

In order for this subspace to be useful, we need the following.

\begin{proposition} \label{bestdense}
$\Omega_\best(S,{\bf z})$ is nonempty and dense in $\Omega(S,{\bf z})$.
\end{proposition}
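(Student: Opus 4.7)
The plan is to use Lemma \ref{codimension2} to reduce the problem to showing $\Omega_\best(S,{\bf z})$ is dense in $\Omega_*(S,{\bf z})$. The two conditions in the definition of $\widetilde \Omega_\best$ are largely independent: arationality of $|\re(\bar\omega)|$ depends only on the image $\bar\omega \in \Omega(S)$ and can be arranged by perturbing across fibers of $\pi$, while the relative period condition can be arranged within each fiber $F_{\bar\omega}$ by moving marked points. I would carry out these two perturbations in sequence.

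First I would establish that the set $\{\hat\omega \in \Omega_*(S,{\bf z}) : |\re(\bar\omega)| \in \AF(S)\}$ is dense in $\Omega_*(S,{\bf z})$. The circle action $\omega \mapsto e^{i\theta}\omega$ on $\widetilde \Omega(S)$ commutes with pullback by $\Diff_0(S,{\bf z})$, preserves $\widetilde\Omega_*$, and so descends to a continuous circle action on $\Omega_*(S,{\bf z})$. By the Kerckhoff--Masur--Smillie theorem, for almost every $\theta\in[0,2\pi)$ the horizontal foliation of $e^{i\theta}\bar\omega$ is uniquely ergodic, hence minimal, hence arational. Thus arbitrarily small rotations of any given $\hat\omega_0$ yield an element $\hat\omega_1$ whose horizontal foliation on $S$ is arational.

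Second, given such $\hat\omega_1$, I would perturb within the fiber $F_{\bar\omega_1}$ using a ${\bf B}$-trivialization $f$ as in Section \ref{S:trivialization}. For each pair $i\neq j$ and each $\alpha \in \Gamma(S,z_i,z_j)$, the corresponding relative period
\[
r_{ij}({\bf b}) \;=\; \int_\alpha \re(f_{\bf b}^*\omega_1) \;=\; \int_{f_{\bf b}\circ \alpha}\re(\omega_1)
\]
depends continuously on ${\bf b}$; since $\hat\omega_1 \in \Omega_*$, the point $z_j$ is not a zero of $\omega_1$, so in the natural coordinate $\zeta$ on a disk around $z_j$ moving $b_j$ varies $r_{ij}$ through an open interval in $\mathbb{R}$. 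Meanwhile, $\Per(\hat\omega_1)=\Per(\bar\omega_1)$ is the image of $H_1(S;\mathbb{Z})$ under integration of $\re(\omega_1)$, a finitely generated and in particular countable subgroup of $\mathbb{R}$. Therefore $\{{\bf b} : r_{ij}({\bf b}) \in \Per(\bar\omega_1)\}$ is a countable union of smooth real-codimension-one subsets of ${\bf B}$, hence meager. Intersecting the complements over the finitely many pairs $(i,j)$ gives a residual, and in particular dense, subset of ${\bf B}$ on which \emph{all} relative-period conditions hold simultaneously; since the fiber perturbation leaves $\bar\omega_1$ unchanged, arationality is preserved.

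Combining the two steps: given $\hat\omega_0 \in \Omega(S,{\bf z})$, first approximate by a point in $\Omega_*$ via Lemma \ref{codimension2}, then rotate slightly to achieve arational horizontal foliation, then perform the fiber perturbation to arrange $\Per_{ij}\not\subset\Per$ for all $i\neq j$. The resulting point lies in $\Omega_\best(S,{\bf z})$ and is arbitrarily close to $\hat\omega_0$, giving both nonemptiness and density. I expect the main hurdle to be the first step: density of arational horizontal foliations rests on the Kerckhoff--Masur--Smillie theorem (or at least the weaker statement that for a.e. $\theta$ the directional foliation is minimal), which is a substantial external input. The fiber perturbation is a comparatively elementary Baire category argument, contingent only on the straightforward observation that absolute periods form a countable set while relative periods sweep out open intervals under small motions of the marked points.
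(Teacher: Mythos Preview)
Your approach is essentially identical to the paper's: first rotate to make the horizontal foliation on $S$ arational, then perturb within the fiber $F_{\bar\omega}$ using a ${\bf B}$--trivialization and the countability of the absolute periods. The paper carries out the fiber step slightly more explicitly, proving that $\int_\gamma\re(f_{\bf b}^*\omega)-\int_\gamma\re(\omega)=\re(\zeta_j(b_j))-\re(\zeta_i(b_i))$, so that the entire set $\Per_{ij}$ is translated by an amount depending affinely on ${\bf b}$; this makes the level sets visibly linear and the measure-zero conclusion immediate, whereas your version relies on the softer observation that $r_{ij}$ varies nontrivially. One small point: you invoke the full Kerckhoff--Masur--Smillie theorem for the first step, but all that is actually needed (and what the paper uses) is the elementary fact that the set of directions containing a saddle connection is countable, since saddle connections are indexed by pairs of singularities and relative homotopy classes.
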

\begin{proof}
First note that the set of $\omega \in \widetilde \Omega(S)$ for which $|\re(\bar \omega)|$ is arational is a dense
subset.  Indeed, for \textit{any} $\omega \in \widetilde \Omega(S)$, the set of $\theta$ for which $|\re(e^{i \theta}
\omega)|$ fails to be arational is countable---there are only countably many directions with a saddle connection; see
also \cite{KerMasSmi}.

We therefore fix $\omega \in \widetilde \Omega_*(S,{\bf z})$ such that $|\re(\bar \omega)|$ is arational and will prove
that the intersection $\Omega_\best(S,{\bf z}) \cap F_{\bar \omega}$ is dense in $F_{\bar \omega}$.  The proposition
will follow from this and Lemma \ref{codimension2}.

Density in $F_{\bar \omega}$ comes from basic genericity considerations: we will show that the relative periods vary
by a translation of $\mathbb R$ in a controlled way as one moves around within the fibers, while the periods do not change.
Since the sets of periods and relative periods are countable, this will easily
imply the result. We now explain this more precisely.

We will consider a ${\bf B}$--trivialization, $f:S \times {\bf B} \to S$. Let $f^\omega: {\bf B} \to \widetilde
\Omega(S)$ and $\hat f^\omega: {\bf B} \to \Omega(S,{\bf z})$ be the associated maps as in Section \ref{movingsection}.

We choose the specific ${\bf B} = B_1 \times ... \times B_n$ and ${\bf U} = U_1 \times ... \times U_n$ so that for each
$j = 1,...,n$, the natural coordinate $\zeta_j$ based at $z_j$ is defined on $U_j$ (see Section \ref{abeliansection}).
Moreover, we require that $\zeta_j$ maps $B_j$ diffeomorphically onto a square in $\mathbb C$. That is, there exists
$\epsilon > 0$ so that
$$\zeta_j(B_j) = (-\epsilon,\epsilon)^2 =  \{ x+ iy \in \mathbb C \, | \, x,y \in (-\epsilon,\epsilon) \}.$$
Observe that $U_j \cap \Zeros(\omega) = \emptyset$.  Since $\Zeros(f^\omega({\bf b})) = \Zeros(f_{\bf b}^*(\omega))$,
$f_{\bf b}$ is the identity outside $\cup_i U_i$ and $f_{\bf b}(z_i) = b_i \in B_i \subset U_i$, it follows that $\hat
f^\omega({\bf B}) \subset \Omega_*(S,{\bf z})$.

\begin{claim}
There exists a dense subset ${\bf E} \subset {\bf B}$ so that $\hat f^\omega({\bf E}) \subset \Omega_\best(S,{\bf z})$.
Equivalently, $f_{\bf b}^*(\omega) \in \widetilde \Omega_\best(S,{\bf z})$ for all ${\bf b} \in {\bf E}$.
\end{claim}

Since $\hat f^{\omega}({\bf B})$ is a neighborhood of $\hat \omega$ in $F_{\bar \omega}$, this claim implies that there
exists a point of $\Omega_\best(S,{\bf z})$ arbitrarily close to $\hat \omega$. Since $\hat \omega$ was an arbitrary
point of a dense subset, this will prove the proposition.

\begin{proof}[Proof of Claim.]
Let $\gamma:[0,1] \to S$ be any path from $z_i$ to $z_j$, representing an element of $\Gamma(S,z_i,z_j)$. The following
says that the change in $\gamma$--period from $\omega$ to $f_{\bf b}^*(\omega)$ is independent of $\gamma$, and is
given by a simple function defined on ${\bf B}$.
\begin{subclaim}
$$\int_\gamma \re(f_{\bf b}^*\omega) - \int_\gamma \re(\omega) = \re(\zeta_j(b_j)) - \re(\zeta_i(b_i))$$
\end{subclaim}

\begin{proof}[Proof of Subclaim.]
For any ${\bf b} \in {\bf B}$ fix a path ${\bf \sigma}:[0,1] \to {\bf B}$ going from ${\bf z}$ to ${\bf b}$, writing
${\bf \sigma}(t) = (\sigma_1(t),...,\sigma_n(t))$. This determines a map
$$H:[0,1] \times [0,1] \to S$$
by $H(t,u) = f_{{\bf \sigma}(u)}(\gamma(t))$.

The restriction of $H$ to the boundary of $[0,1] \times [0,1]$ determines four paths.  The bottom path is $H(t,0) =
\gamma(t)$.  The top path is $H(t,1) = f_{\sigma(1)}(\gamma(t)) = f_{\bf b}(\gamma(t))$. The left side path, oriented
up, is $H(0,u) = f_{\sigma(u)}(\gamma(0)) = f_{\sigma(u)}(z_i) = \sigma_i(u)$ and the right side path, also oriented
up, is $H(1,u) = f_{\sigma(u)}(\gamma(1)) = f_{\sigma(u)}(z_j) = \sigma_j(u)$.  Because $\re(\omega)$ is closed, the
integral over the boundary of $[0,1] \times [0,1]$ of $H^*(\re(\omega))$ is zero and so
$$0 = \int_{H(\partial ([0,1] \times [0,1]))} \re(\omega) = \int_{f_{\bf b}(\gamma)} \re(\omega) + \int_{\sigma_i} \re(\omega) - \int_{\sigma_j} \re(\omega) - \int_\gamma \re(\omega).$$
Since $\sigma_i$ connects $z_i$ to $b_i$ within $B_i$ and $\sigma_j$ connects $z_j$ to $b_j$ within $B_j$ we have
$$\int_{\sigma_i} \re(\omega) = \re(\zeta_i(b_i)) \quad \mbox{ and } \quad \int_{\sigma_j} \re(\omega) = \re(\zeta_j(b_j)).$$
Combining this with the previous equation and the descriptions of the four paths involved in that equation we obtain
$$\int_\gamma \re(f_{\bf b}^*\omega) = \int_{f_{\bf b}(\gamma)} \re(\omega) =
\int_\gamma \re(\omega) + \re(\zeta_j(b_j)) - \re(\zeta_i(b_i))$$ and this proves the subclaim.
\end{proof}

We now see that
$$\Per_{ij}(\hat f^\omega({\bf b})) = \Per_{ij}(\widehat{f_{\bf b}^*(\omega)}) = \Per_{ij}(\widehat \omega) + \re(\zeta_j(b_j)) - \re(\zeta_i(b_i)).$$
That is, the subsets of $\mathbb R$, $\Per_{ij}(\hat \omega)$ and $\Per_{ij}(\hat f^\omega({\bf b}))$ differ exactly by
a translation by $\re(\zeta_j(b_j)) - \re(\zeta_i(b_i))$.  Since the set of periods and relative periods are all
countable sets and since $\Per(\hat f^\omega({\bf b})) = \Per(\hat \omega)$ it follows that for almost all ${\bf b}$ we
have
$$\Per_{ij}(\hat f^\omega({\bf b})) \cap \Per(\hat f^\omega({\bf b})) = \emptyset$$
In particular, setting
$${\bf E} = \left\{ {\bf b} \, \big| \, \Per_{ij}(\hat f^\omega({\bf b})) \cap \Per(\hat f^\omega({\bf b})) = \emptyset \right\}$$
we have found the required set.
\end{proof}

This completes the proof of the proposition.
\end{proof}

\begin{corollary} \label{denseinmf}
$|\re|(\Omega_\best(S,{\bf z}))$ is dense in $\AF(S,{\bf z})$.
\end{corollary}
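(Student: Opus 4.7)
The plan is to deduce Corollary \ref{denseinmf} from Propositions \ref{bestarational} and \ref{bestdense}, Lemma \ref{equivariant}, and the density of orientable measured foliations in $\MF(S,{\bf z})$.

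By Proposition \ref{bestarational} we already have $|\re|(\Omega_\best(S,{\bf z})) \subset \AF(S,{\bf z})$, so only density is at issue. Fix $\mu \in \AF(S,{\bf z})$ and an open neighborhood $U$ of $\mu$ in $\MF(S,{\bf z})$; the goal is to produce $\nu \in |\re|(\Omega_\best(S,{\bf z})) \cap U$.

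The argument I would give has two steps. First, I would approximate $\mu$ by an orientable measured foliation: using the density of orientable foliations in $\MF(S,{\bf z})$ together with the Hubbard--Masur realization of each such foliation as $|\re(\hat{\omega})|$ for some abelian differential $\hat{\omega} \in \Omega(S,{\bf z})$, I would select $\hat{\omega} \in \Omega(S,{\bf z})$ with $|\re(\hat{\omega})| \in U$. Second, I would apply Proposition \ref{bestdense}: $\Omega_\best$ is dense in $\Omega(S,{\bf z})$, so there is a sequence $\hat{\omega}_n \in \Omega_\best$ with $\hat{\omega}_n \to \hat{\omega}$, and continuity of $|\re|$ from Lemma \ref{equivariant} gives $|\re(\hat{\omega}_n)| \to |\re(\hat{\omega})|$. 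For $n$ sufficiently large, $\nu := |\re(\hat{\omega}_n)| \in U$, and this $\nu$ lies in $|\re|(\Omega_\best) \cap U$, as required.

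The main obstacle is the first step, namely the density of $|\re|(\Omega(S,{\bf z}))$---the orientable measured foliations on $(S,{\bf z})$ without singularities at marked points---inside $\MF(S,{\bf z})$. This is the only external input; once it is granted, the rest is a formal combination of Proposition \ref{bestdense} and Lemma \ref{equivariant}. If this clean density statement turns out to be awkward to use directly, an alternative is to sharpen the proof of Proposition \ref{bestdense} so that the dense subset built in each fiber $F_{\bar{\omega}}$ maps, under $|\re|$, onto a neighborhood of the fiber's image in $\MF(S,{\bf z})$; this would trade the external density input for a local fiberwise analysis of $|\re|$, made possible by the explicit period-translation formula appearing in the proof of Proposition \ref{bestdense}.
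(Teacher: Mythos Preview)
Your argument is correct, but it takes an unnecessary detour compared with the paper.  The paper's proof is a one-liner: since $\Omega_\best(S,{\bf z})$ is $\Mod(S,{\bf z})$--invariant and $|\re|$ is equivariant (Lemma~\ref{equivariant}), the set $|\re|(\Omega_\best(S,{\bf z}))$ is a nonempty $\Mod(S,{\bf z})$--invariant subset of $\MF(S,{\bf z})$; minimality of the $\Mod(S,{\bf z})$--action on $\PMF(S,{\bf z})$ (Th\'eor\`eme 6.7 of \cite{FLP}) then forces its projectivization to be dense, hence it is dense in $\MF(S,{\bf z})$ and therefore in $\AF(S,{\bf z})$.

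Your route factors this through two steps: first the density of $|\re|(\Omega(S,{\bf z}))$ (orientable foliations with no $1$--prongs) in $\MF(S,{\bf z})$, then Proposition~\ref{bestdense} plus continuity.  But the ``external input'' you flag as the main obstacle---density of $|\re|(\Omega(S,{\bf z}))$---is most cleanly proved by exactly the same minimality argument ($|\re|(\Omega(S,{\bf z}))$ is nonempty and $\Mod(S,{\bf z})$--invariant).  Once you are invoking minimality anyway, you may as well apply it directly to $|\re|(\Omega_\best(S,{\bf z}))$ and skip both the approximation in $\Omega(S,{\bf z})$ and the continuity step.  Your alternative suggestion of sharpening the fiberwise analysis of Proposition~\ref{bestdense} would work but is considerably more effort than either route.
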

\begin{proof}
Since $\Omega_\best(S,{\bf z})$ is invariant by $\Mod(S,{\bf z})$, Lemma \ref{equivariant} implies that
$|\re|(\Omega_\best(S,{\bf z}))$ is also $\Mod(S,{\bf z})$--invariant.  Now, $\Mod(S,{\bf z})$ acts minimally on
$\PMF(S,{\bf z})$---see Theorem 6.7 of \cite{FLP}---and so it follows that the image of $|\re|(\Omega_\best(S,{\bf
z}))$ in $\PMF(S,{\bf z})$ is dense.  This implies that the same is true of $|\re|(\Omega_\best(S,{\bf z}))$ in
$\MF(S,{\bf z})$, and so also in $\AF(S,{\bf z})$.
\end{proof}

\section{Paths in $\AF(S,{\bf z})$} \label{pathsinmfsection}

In this section, we prove the key ingredient which produces an abundance of paths in $\AF(S,{\bf z})$.
\begin{theorem} \label{connectingneighborhood}
There is an open cover of $\mathcal U$ of $\Omega_\zero(S,{\bf z})$ with the property that for any $U \in \mathcal U$
and any $\hat \omega,\hat \eta \in U \cap \Omega_\best(S,{\bf z})$, there is a path in $\AF(S,{\bf z})$ connecting
$|\re(\hat \omega)|$ and $|\re(\hat \eta))|$.
\end{theorem}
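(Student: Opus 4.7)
The plan is to construct the cover $\mathcal U$ from trivializing neighborhoods of the bundle $\pi\colon \Omega(S,{\bf z})\to \Omega(S)$ obtained via the ${\bf B}$--trivializations of Section \ref{S:trivialization}, and then to connect two points of $U\cap\Omega_\best$ by a three--stage path: two intra--fiber moves handled by the period translation from Proposition \ref{bestdense}, sandwiching one cross--fiber move.

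For each $\omega_0\in \widetilde\Omega_*(S,{\bf z})$ I would pick disjoint disk neighborhoods $B_i\subset U_i$ of the $z_i$ with $\overline{\cup_iU_i}$ disjoint from $\Zeros(\omega_0)$, together with a ${\bf B}$--trivialization $f\colon S\times{\bf B}\to S$. By continuity of the zero locus in $\omega$, there is an open neighborhood $W\subset\widetilde\Omega(S)$ of $\omega_0$ on which $\Zeros(\omega)\cap\overline{\cup_iU_i}=\emptyset$ for every $\omega\in W$. Using the product structure (\ref{producteqn2}) and the trivialization, take $U_{\omega_0}\subset \Omega_*(S,{\bf z})$ to be the image of $W\times{\bf B}$; these $U_{\omega_0}$ are open and cover $\Omega_*(S,{\bf z})$.

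Given $\hat\omega,\hat\eta\in U_{\omega_0}\cap \Omega_\best$, I would connect them in three stages: within $F_{\bar\omega}$ from $\hat\omega$ to a chosen $\hat\omega'$; across fibers from $\hat\omega'$ to some $\hat\eta'\in F_{\bar\eta}$; then within $F_{\bar\eta}$ from $\hat\eta'$ to $\hat\eta$. For the two intra--fiber stages I would apply the subclaim in the proof of Proposition \ref{bestdense}: moving the parameter ${\bf b}(t)\in{\bf B}$ translates each $\Per_{ij}(\hat f^\omega({\bf b}(t)))$ by $\re(\zeta_j(b_j(t)))-\re(\zeta_i(b_i(t)))$ while leaving $\Per$ unchanged. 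Since $\Per$ and every $\Per_{ij}$ are countable, the locus in ${\bf B}$ where some $\Per_{ij}$ becomes contained in $\Per$ is a countable union of real codimension--one submanifolds, and a generic continuous path in ${\bf B}$ avoids it. Any such path lies in $\Omega_\best$ by construction, so by Proposition \ref{bestarational} its image under $|\re|$ is a path in $\AF(S,{\bf z})$.

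The main obstacle is the cross--fiber stage, since varying the complex structure can destroy arationality of $|\re(\bar\omega_t)|$ on $S$ or create leaf cycles between marked points. I would fix a common parameter ${\bf b}_0\in{\bf B}$ and vary only the $1$--form $\omega_t\in W$ along a path connecting lifts of $\bar\omega$ and $\bar\eta$, taking the path $\widehat{f_{{\bf b}_0}^*\omega_t}$ in $\Omega_*(S,{\bf z})$. This path need not stay in $\Omega_\best$, but to land in $\AF(S,{\bf z})$ under $|\re|$ one only needs $|\re(\widehat{f_{{\bf b}_0}^*\omega_t})|$ to have no leaf cycle at every $t$. The hard step is to use the freedom in choosing $\hat\omega'$, $\hat\eta'$, and the path $\omega_t$ to ensure this; following the countability--of--bad--directions argument of Kerckhoff--Masur--Smillie cited in Proposition \ref{bestdense}, the set of parameters producing a leaf cycle is meager along a generic cross--fiber path, and I expect that a combined genericity argument in ${\bf b}_0$ and $\omega_t$ produces the desired path whose $|\re|$--image stays in $\AF(S,{\bf z})$.
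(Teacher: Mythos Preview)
Your approach has a genuine gap in both the intra-fiber and cross-fiber stages, and misses the mechanism the paper actually uses.

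For the intra-fiber move you claim a generic continuous path in ${\bf B}$ avoids the locus where some $\Per_{ij}$ lies in $\Per$. But this locus is the preimage, under the single real-valued function $g_{ij}({\bf b})=\re(\zeta_j(b_j))-\re(\zeta_i(b_i))$, of a coset of the countable subgroup $\Per(\hat\omega)\subset\mathbb R$. For generic $\omega$ the periods are rationally independent and $\Per(\hat\omega)$ is dense in $\mathbb R$; the bad locus is then a dense union of parallel codimension-one slices of ${\bf B}$. By the intermediate value theorem any continuous path along which $g_{ij}$ changes value must cross this set, so there is no genericity to exploit. The same obstruction persists if you weaken the requirement from ``stay in $\Omega_\best$'' to ``stay in the $|\re|$-preimage of $\AF(S,{\bf z})$'': within a fiber with $|\re(\bar\omega)|$ arational, the set of ${\bf b}$ for which $z_j$ lies on the (dense) leaf through $z_i$ is again a dense union of parallel codimension-one arcs. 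The cross-fiber stage, which you flag as the main obstacle, is left to an unspecified hope; the set of $\bar\omega\in\Omega(S)$ with $|\re(\bar\omega)|$ non-arational is also dense, so the same path-avoidance problem arises there.

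The paper's argument is entirely different and does not attempt a direct path between $\hat\omega$ and $\hat\eta$ in $\Omega_\best$. For each $\nu\in\widetilde\Omega_*(S,{\bf z})$ one constructs a \emph{twisting pair}: filling simple closed curves $\alpha,\beta$ transverse to $\F(\re(\nu))$ with ${\bf z}\subset\alpha\cap\beta$. Pushing all the marked points simultaneously once around $\alpha$ (respectively $\beta$) at constant speed measured by $\re(\omega)$ gives isotopies $D_{\alpha,t},D_{\beta,t}$ for which every relative period $\Per_{ij}$ is \emph{exactly preserved} --- the contributions from moving $z_i$ and $z_j$ cancel --- so these paths remain in $\Omega_\best$ throughout. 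The time-one maps $\hat D_\alpha,\hat D_\beta\in\B(S,{\bf z})$ generate a group containing a pseudo-Anosov $\psi$ (since $\alpha,\beta$ fill), and concatenating edges of the Cayley graph under iteration of $\psi$ produces a path in $\AF(S,{\bf z})$ from $|\re(\hat\omega)|$ converging to the stable foliation $\mu_s$. Because transversality of $\alpha,\beta$ to $\F(\re(\cdot))$ is an open condition, the same twisting pair (hence the same $\psi$ and the same $\mu_s$) works for every point of a neighborhood $U$ of $\hat\nu$; thus $|\re(\hat\omega)|$ and $|\re(\hat\eta)|$ are both connected to the common target $\mu_s$. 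This pseudo-Anosov limit point is what replaces your cross-fiber move.
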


\subsection{Twisting pairs} \label{twistpairsection}

Fix a point $\omega \in \widetilde \Omega_*(S,{\bf z})$.  We say that a pair of simple closed curves $\alpha$ and
$\beta$ on $S$ are a \textbf{twisting pair} for $\omega$ if
\begin{itemize}
\item $\alpha$ and $\beta$ meet transversely and minimally,
\item $\alpha$ and $\beta$ fill $S$,
\item ${\bf z} \subset \alpha \cap \beta$, and
\item $\alpha$ and $\beta$ are transverse to $\F(\re(\omega))$, and $\alpha \cap \Zeros(\omega) = \beta \cap \Zeros(\omega) = \emptyset$.
\end{itemize}

\begin{lemma} \label{twistpairsexist}
For any $\omega \in \widetilde \Omega_*(S,{\bf z})$, there is a twisting pair $\alpha,\beta$ for $\omega$.
\end{lemma}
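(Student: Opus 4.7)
The plan is a three-stage construction. First, I build any topological filling pair $\alpha_0,\beta_0$ of simple closed curves on $S$ meeting transversely; such pairs exist on any closed orientable surface of genus at least one. Second, I modify the pair so that each marked point lies on both curves and so that the curves avoid $\Zeros(\omega)$. For each $z_i \in {\bf z}$ in turn, pick a short embedded arc in $S \setminus \Zeros(\omega)$ joining $z_i$ to a nearby point of $\alpha_0 \cap \beta_0$ and use it as the support of an isotopy that pulls both curves through $z_i$. Since $\Zeros(\omega)$ is finite and disjoint from ${\bf z}$, all of these isotopies can be arranged in its complement. The curves remain simple, still fill $S$, still meet transversely, and now satisfy ${\bf z} \subset \alpha_0 \cap \beta_0$.

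Third, I achieve transversality to $\F = \F(\re(\omega))$. The foliation $\F$ is smooth away from the finite set $\Zeros(\omega)$, and in the natural coordinates of $\omega$ near any point of ${\bf z}$ it is the vertical foliation of $\mathbb C$. The two curves cross transversely at each $z_i$ and hence carry linearly independent tangent directions there, so at most one of them can be vertical; a $C^1$-small rotation supported in a disk neighbourhood of $z_i$ can therefore cure any tangency with $\F$ at $z_i$ while preserving mutual transversality of $\alpha_0$ and $\beta_0$ at that point. Away from ${\bf z} \cup \Zeros(\omega)$, transversality to a smooth foliation is an open, $C^1$-dense condition, so a further $C^1$-small generic perturbation supported in this region produces global transversality to $\F$ together with disjointness from $\Zeros(\omega)$. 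Any excess intersection points created by the perturbations sit inside bigons of $\alpha \cup \beta$; I remove innermost such bigons whose two corners are non-marked by small supported isotopies, which, being $C^1$-small and having support in the interior of the bigon, preserve transversality to $\F$, simplicity, mutual transversality, the inclusion ${\bf z} \subset \alpha \cap \beta$, and disjointness from $\Zeros(\omega)$. After finitely many such moves $\alpha$ and $\beta$ meet minimally in their isotopy classes relative to ${\bf z}$, and all four conditions defining a twisting pair are satisfied.

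The main obstacle is the coordination in the third step between the local behaviour at each $z_i$—where a tangency with $\F$ must be broken without disturbing the mutual transversality of $\alpha_0$ and $\beta_0$—and the global generic perturbation elsewhere. The natural coordinates of $\omega$ reduce the local problem at each $z_i$ to a single one-parameter rotation inside a flat disk, and the two-dimensional freedom of tangent directions at a transverse intersection leaves ample room to avoid the one forbidden vertical direction; this is where I expect the only genuine work to lie.
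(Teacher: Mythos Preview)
Your third step contains a genuine error: the claim that ``transversality to a smooth foliation is an open, $C^1$-dense condition'' is false. Transversality of a curve $\gamma$ to $\F(\re(\omega))$ means that the function $t \mapsto \re(\omega)(\gamma'(t))$ is nowhere zero. A generic tangency of $\gamma$ with $\F$ is a nondegenerate critical point of the primitive $t \mapsto \int_0^t \re(\omega)(\gamma')$, and such critical points are \emph{stable} under $C^1$-small perturbation---they move but do not disappear. Concretely, think of the horizontal foliation of the plane and a curve with a local maximum in the vertical coordinate: no small perturbation removes that tangency. Worse, there is a homological obstruction: if $\int_\gamma \re(\omega) = 0$ (for instance if $\gamma$ is separating, and filling pairs with a separating member do exist), then the function above has zero average over $S^1$ and must vanish somewhere, so \emph{no} representative of that isotopy class is transverse to $\F$. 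Since your first step begins with an arbitrary filling pair, nothing prevents this. You are likely thinking of Thom transversality of a map to a submanifold, which is a different statement; transversality of a curve to a line field on a surface is a global, index-theoretic condition, not a generic one.

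The paper's proof sidesteps this entirely by building $\alpha$ and $\beta$ from the outset as long segments of leaves of $\F(\re(e^{i\theta_j}\omega))$ for two generic directions $\theta_1,\theta_2 \neq 0$, closed up by short arcs. Such curves are automatically transverse to $\F(\re(\omega))$ because they make a nearly constant nonzero angle with the vertical direction. Filling and minimality then come for free from the fact that the two auxiliary foliations are arational and uniquely ergodic, so the curves converge projectively to a binding pair; the marked points are picked up by a final small perturbation near dense intersection points. The point is that transversality to $\F$ is achieved by construction, not by perturbation.
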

\begin{proof} Pick two distinct points $e^{i \theta_1},e^{i \theta_2} \in S^1 \subset \mathbb C$ neither of which is equal to $1$ and
let $\F_j = \F(\re(e^{i \theta_j} \omega))$ and $\mu_j = |\re(e^{i \theta_j} \omega)|$ for $j = 1,2$.  We choose $e^{i
\theta_1},e^{i \theta_2}$ so that $(\F_1,\mu_1)$ and $(\F_2,\mu_2)$ are uniquely ergodic and arational. According to
\cite{KerMasSmi}, this is true for almost every $e^{i \theta} \in S^1$. We also assume, as we may, that the leaves
through $z_1$ do not pass through $\Zeros(\omega) = \Zeros(e^{i \theta_j} \omega)$, for $j = 1,2$.

\begin{figure}[htb]
\centerline{}
\centerline{}
\begin{center}
\ \psfig{file=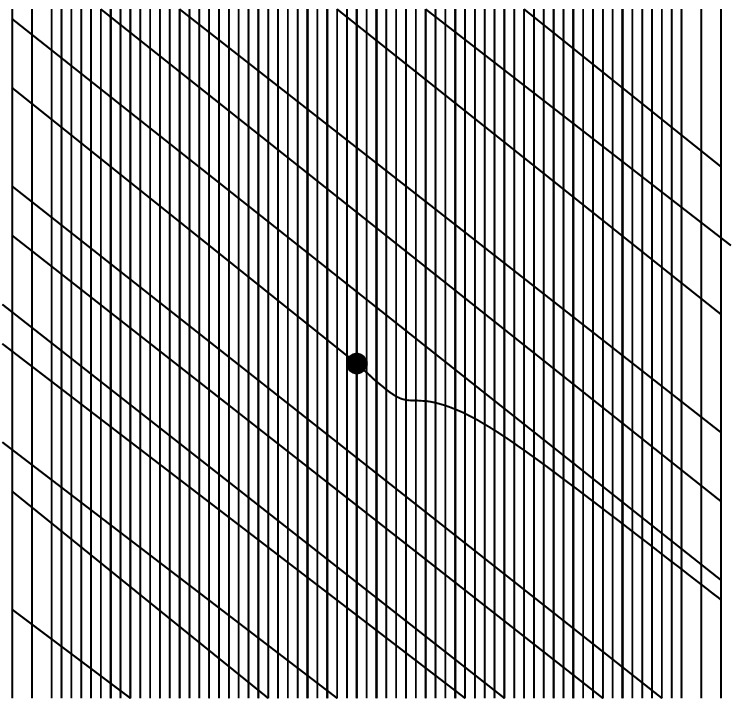,height=1.5truein} \caption{Closing up to a simple closed curve through $z_1$.}
\label{closeup}
\end{center}
\end{figure}

For each $j = 1,2$, we construct a sequence of simple closed curves $\{\gamma^j_k\}_{k=1}^\infty$ which approximate
leaves of $\F_j$.  Start with a ray in the leaf of $\F_j$ emanating from $z_1$. Take an initial segment of length at
least $k$ which comes sufficiently close to $z_1$ so that it can be smoothly closed up to a simple closed curve
$\gamma^j_k$ transverse to $\F(\re(\omega))$; see Figure \ref{closeup}. For this, one can work in the natural
coordinate $\zeta_1$ based at $z_1$ for $\omega$ (as described in Section \ref{abeliansection}) and appeal to the fact
that the ray is dense in $S$ by arationality of $\F_j$.

Given any $\epsilon > 0$, we can also assume that the curve $\gamma^j_k$ makes an angle at most $\epsilon$ with $\F_j$
at every point, provided $k$ is sufficiently large.  In particular, if $\epsilon$ is sufficiently small, it follows
that $\gamma^1_k$ and $\gamma^2_k$ will intersect transversely and minimally.

Observe that by construction, $\I(\gamma^j_k,\mu_j) \to 0$ as $k \to \infty$, and hence by unique ergodicity and
arationality, $\gamma^j_k \to \mathbb P (\mu_j)$ in $\PMF(S)$ for each $j = 1,2$. So by taking $k$ even larger if
necessary, we may assume that $\gamma^1_k$ and $\gamma^2_k$ fill $S$.

Finally, because both rays are dense and transverse to each other, their points of intersection are dense.  Hence by
taking $k$ larger still, we can guarantee that the set of intersection points of $\gamma^1_k$ and $\gamma^2_k$ are
$\epsilon$--dense.  In particular, each $z_i$ is within $\epsilon$ of a point of intersection of $\gamma^1_k$ and
$\gamma^2_k$.  For sufficiently small $\epsilon$ (again, working in a natural coordinate), we can perturb $\gamma^1_k$
and $\gamma^2_k$ to simple closed curves $\alpha$ and $\beta$ which are a twisting pair for $\omega$.  See Figure
\ref{perturb}.
\begin{figure}[htb]
\centerline{}
\centerline{}
\begin{center}
\ \psfig{file=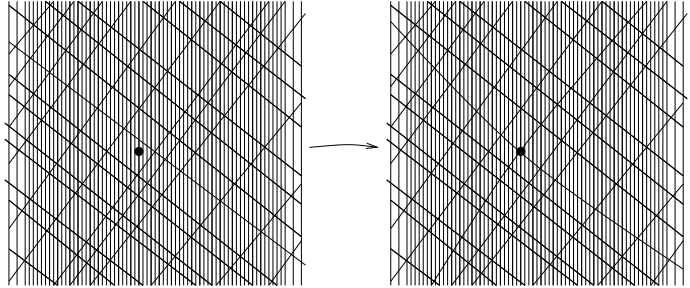,height=1.5truein}
\caption{Perturbing $\gamma^1_k$ and $\gamma^2_k$ to $\alpha$ and $\beta$ passing through $z_i$.}
\label{perturb}
\end{center}
\end{figure}
\end{proof}

\subsection{The group of a twisting pair.}

Now let $\alpha,\beta$ be a twisting pair for $\omega \in \widetilde \Omega_*(S)$.  Our first goal is to define
isotopies, $D_{\alpha,t}$ and $D_{\beta,t}$ supported on annular neighborhoods of $\alpha$ and $\beta$, respectively,
which push the set ${\bf z}$ once around $\alpha$ and $\beta$, respectively, at a constant speed as measured with
respect to $\re(\omega)$.  We will use these isotopies to construct paths in $\Omega_\best(S,{\bf z})$ which will be
used in the proof of Theorem \ref{connectingneighborhood}. We now describe this in more detail.

Let $\epsilon > 0$ be such that the $\epsilon$--neighborhood $N_\epsilon(\alpha)$ is an annulus, and the foliation
$\F(\re(\omega))$ restricted to this annulus is the product foliation $N_\epsilon(\alpha) \cong S^1 \times [0,1]$. More
precisely, we have a diffeomorphism
$$f_\alpha:N_\epsilon(\alpha) \to S^1 \times [0,1]$$
which we assume takes $\alpha$ to $S^1 \times \{1/2\}$. Moreover, we can choose $f_\alpha$ so that
$$f_\alpha^*(ds) = \frac{1}{c} \re(\omega)$$
where $c = \I(\alpha,|\re(\omega)|)$ and $ds$ is the $1$--form coming from the factor $S^1 = \mathbb R/\mathbb Z$ where it is defined by the standard
coordinate $s$ on $\mathbb R$.
Let $D_{\alpha,t}:S \to S$, $t \in [0,1]$ be an isotopy supported on $N_\epsilon(\alpha)$ defined as follows.

Let $\psi:[0,1] \to [0,1]$ be a smooth function identically zero in a neighborhood of $0$ and $1$, and equal to $1$ at
$1/2$.  Define
$$D_{\alpha,t}^0:S^1 \times [0,1] \to S^1 \times [0,1]$$
by
$$D_{\alpha,t}^0(s,x) = (s + t \psi(x),x).$$
Then define $D_{\alpha,t}$ to be the identity outside $N_\epsilon(\alpha)$ and equal to
$$f_\alpha^{-1} D_{\alpha,t}^0 f_\alpha$$
on $N_\epsilon(\alpha)$.

Likewise, we can define
$$D_{\beta,t}:S \to S$$
using $\beta$ in place of $\alpha$.
\begin{proposition} \label{P:beststillbest}
If $\omega \in \widetilde \Omega_\best(S,{\bf z})$, then $D_{\alpha,t}^*(\omega),D_{\beta,t}^*(\omega) \in \widetilde
\Omega_\best(S,{\bf z})$ for all $t \in [0,1]$.
\end{proposition}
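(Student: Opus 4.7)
The plan is to verify separately the three defining conditions of $\widetilde\Omega_\best(S,{\bf z})$ under the pullback by $D_{\alpha,t}$; the argument for $D_{\beta,t}$ will be identical after substituting $\beta$ for $\alpha$ throughout.

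For the zero-condition, I would use the twisting-pair hypothesis $\alpha\cap\Zeros(\omega)=\emptyset$ to shrink $\epsilon$, if necessary, so that $N_\epsilon(\alpha)$ is disjoint from $\Zeros(\omega)$. Then $D_{\alpha,t}$ fixes every zero of $\omega$, so $\Zeros(D_{\alpha,t}^*\omega)=\Zeros(\omega)$, which is disjoint from ${\bf z}$ since $\omega\in\widetilde\Omega_*(S,{\bf z})$. For the arationality condition, I only need that $D_{\alpha,t}\in\Diff_0(S)$: this forces $\overline{D_{\alpha,t}^*\omega}=\bar\omega$ in $\Omega(S)$, so $|\re(\overline{D_{\alpha,t}^*\omega})|=|\re(\bar\omega)|\in\AF(S)$, and moreover $\Per(\widehat{D_{\alpha,t}^*\omega})=\Per(\hat\omega)$ is unchanged. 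What remains is the relative-period inequality.

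The main step is to track how relative periods vary with $t$; I expect to show they do not vary at all, so that $\Per_{ij}(\widehat{D_{\alpha,t}^*\omega})=\Per_{ij}(\hat\omega)$ for every $i\ne j$. The tool is a direct analog of the Subclaim inside Proposition \ref{bestdense}: for any $\gamma\in\Gamma(S,z_i,z_j)$ I apply Stokes' theorem to the closed form $\re(\omega)$ on the rectangle $H(u,s)=D_{\alpha,s}(\gamma(u))$, $(u,s)\in[0,1]\times[0,t]$, and read off the boundary to obtain
\begin{equation*}
\int_\gamma \re(D_{\alpha,t}^*\omega)-\int_\gamma \re(\omega) = \int_0^t \re(\omega)\bigl(\dot D_{\alpha,s}(z_j)\bigr)\,ds - \int_0^t \re(\omega)\bigl(\dot D_{\alpha,s}(z_i)\bigr)\,ds.
\end{equation*}

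The crux, and really the whole content of the proposition, is that each of the two integrals above is independent of which marked point is tracked. This is by design: the twisting-pair condition ${\bf z}\subset\alpha\cap\beta$ places every $z_i$ on $\alpha$, so in the straightening coordinate $f_\alpha$ the point $z_i$ sits on the central circle $S^1\times\{1/2\}$ where $\psi\equiv 1$. Hence $s\mapsto D_{\alpha,s}(z_i)$ corresponds under $f_\alpha$ to the unit-speed arc $s\mapsto(s_i+s,1/2)$, and the normalization $f_\alpha^*(ds)=c^{-1}\re(\omega)$ gives $\re(\omega)(\dot D_{\alpha,s}(z_i))=c$ independently of $i$. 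Both boundary integrals therefore equal $ct$ and cancel, so every relative period is preserved and the inequality $\Per_{ij}\not\subset\Per$ is inherited from $\omega$. The argument for $D_{\beta,t}^*\omega$ is the verbatim analog using ${\bf z}\subset\beta$ and $f_\beta^*(ds)=\I(\beta,|\re(\omega)|)^{-1}\re(\omega)$.
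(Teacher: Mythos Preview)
Your proof is correct and follows the same approach as the paper: both apply the Stokes/rectangle argument from the Subclaim of Proposition \ref{bestdense} to show every relative period is unchanged, the key point being that each $z_i$ lies on $\alpha$ and hence moves at the same $\re(\omega)$-speed (the paper phrases this as $\int_{\delta_i([0,t])}\re(\omega)=t\int_\alpha\re(\omega)$, independent of $i$, and then cancels). Your shrinking of $\epsilon$ is unnecessary---the paper's choice of $\epsilon$ already makes $\F(\re(\omega))$ a product foliation on $N_\epsilon(\alpha)$, forcing $N_\epsilon(\alpha)\cap\Zeros(\omega)=\emptyset$---but this is harmless.
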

\begin{proof}
Observe that $\delta_i(t) = D_{\alpha,t}(z_i)$, for $t \in [0,1]$ is a parameterization of $\alpha$ starting at $z_i$
with
\begin{equation} \label{changeinperiods} \int_{\delta_i([0,t])} \re(\omega) = t \int_\alpha
\re(\omega).\end{equation} A computation similar to the one done in the proof of Proposition \ref{bestdense} tells us
that for any $\gamma \in \Gamma(S,z_i,z_j)$, we have
$$\int_\gamma D_{\alpha,t}^*(\re(\omega)) - \int_\gamma \re(\omega) =
\int_{\delta_j([0,t])} \re(\omega) - \int_{\delta_i([0,t])} \re(\omega).$$ According to (\ref{changeinperiods}), this
becomes
$$\int_\gamma D_{\alpha,t}^*(\re(\omega)) - \int_\gamma \re(\omega) = t\int_{\alpha} \re(\omega) -
t\int_{\alpha} \re(\omega) = 0.$$ It follows that
\begin{equation} \label{E:nochangeinperalpha} \Per_{ij}(\omega) = \Per_{ij}(D_{\alpha,t}^*(\omega)), \quad \forall t \in [0,1].\end{equation}
Similarly, for $D_{\beta,t}$ we obtain
\begin{equation} \label{E:nochangeinperbeta} \Per_{ij}(\omega) = \Per_{ij}(D_{\beta,t}^*(\omega)), \quad \forall t \in [0,1].\end{equation}

Finally, observe that for all $t \in [0,1]$
\[|\re(\overline{D_{\alpha,t}^*(\omega)})| = |\re(\bar \omega)| = |\re(\overline{D_{\beta,t}^*(\omega)})| .\]
From this, equations \eqref{E:nochangeinperalpha} and \eqref{E:nochangeinperbeta}, and the definition of $\widetilde
\Omega_\best(S,{\bf z})$, the proposition follows.
\end{proof}

The diffeomorphisms $D_\alpha = D_{\alpha,1}$ and $D_\beta = D_{\beta,1}$ are in $\Diff_{0,{\bf z}}(S,{\bf z})$.  In
fact $D_\alpha$ can be alternatively described as a Dehn twist in one component of $\partial N_{\epsilon}(\alpha)$
composed with an inverse Dehn twist in the other, and similarly for $D_\beta$.  As usual, we let $\hat D_\alpha ,\hat
D_\beta \in \B(S,{\bf z}) < \Mod(S,{\bf z})$ denote the associated mapping classes.

As described in Section \ref{abeliansection} (\ref{E:leftfromright}), the left action is described by the equations
$$D_\alpha^{-1} \cdot \omega = D_\alpha^*(\omega)$$
and
$$D_\beta^{-1} \cdot \omega = D_\beta^*(\omega).$$

Let $\G(\hat D_\alpha,\hat D_\beta)$ be the Cayley graph of $\langle \hat D_\alpha, \hat D_\beta \rangle < \B(S,{\bf
z})$ with respect to the generators $\hat D_\alpha,\hat D_\beta$.
\begin{proposition}
Suppose $\alpha,\beta$ is a twisting pair for $\omega$.   If $\hat \omega \in \Omega_\best(S,{\bf z})$, then there is a
$\langle \hat D_\alpha, \hat D_\beta \rangle$--equivariant continuous map
$$P_{\hat \omega}: \G(\hat D_\alpha,\hat D_\beta) \to \Omega_\best(S,{\bf z})$$
sending $\id \in \langle \hat D_\alpha, \hat D_\beta \rangle$ to $\hat \omega$.
\end{proposition}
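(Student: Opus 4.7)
The plan is to build $P_{\hat\omega}$ in two stages: define it on the vertices of $\G(\hat D_\alpha,\hat D_\beta)$ by the orbit map, then interpolate across each edge using the isotopies $D_{\alpha,t}$ and $D_{\beta,t}$ that were engineered in Section \ref{twistpairsection} precisely so that Proposition \ref{P:beststillbest} holds.

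\textbf{Vertex map.} Define $P_{\hat\omega}(\hat\psi)=\hat\psi\cdot\hat\omega$. I need to check that this lands in $\Omega_\best(S,{\bf z})$ for every $\hat\psi\in\langle\hat D_\alpha,\hat D_\beta\rangle$. Proposition \ref{P:beststillbest} at $t=1$ says $D_\alpha^\ast\omega$ and $D_\beta^\ast\omega$ remain in $\widetilde\Omega_\best(S,{\bf z})$; via the conversion $\phi\cdot\omega=\phi^{-1\,\ast}\omega$ from \eqref{E:leftfromright}, this translates into $\hat D_\alpha^{-1}\cdot\hat\omega,\hat D_\beta^{-1}\cdot\hat\omega\in\Omega_\best(S,{\bf z})$. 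Since $\hat D_\alpha^{\pm1}$ and $\hat D_\beta^{\pm 1}$ then act on $\Omega_\best(S,{\bf z})$ by bijections (one being the inverse of the other), the image of any word is again in $\Omega_\best(S,{\bf z})$ by induction on word length.

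\textbf{Edge map and equivariance.} Set $\hat\rho_\alpha(t)=\widehat{D_{\alpha,t}^{\,\ast}\omega}$ and $\hat\rho_\beta(t)=\widehat{D_{\beta,t}^{\,\ast}\omega}$. By Proposition \ref{P:beststillbest} these are continuous paths $[0,1]\to\Omega_\best(S,{\bf z})$ running from $\hat\omega$ to $\hat D_\alpha^{-1}\cdot\hat\omega$ and $\hat D_\beta^{-1}\cdot\hat\omega$, respectively. For the Cayley-graph edge joining $\hat\psi$ to $\hat\psi\hat D_\alpha^{-1}$, parametrize its image by $t\mapsto\hat\psi\cdot\hat\rho_\alpha(t)$; for the edge joining $\hat\psi$ to $\hat\psi\hat D_\alpha$, use $t\mapsto\hat\psi\hat D_\alpha\cdot\hat\rho_\alpha(1-t)$; define the $\hat D_\beta$-edges analogously with $\hat\rho_\beta$. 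Because $\langle\hat D_\alpha,\hat D_\beta\rangle$ preserves $\Omega_\best(S,{\bf z})$, every edge path lies in $\Omega_\best(S,{\bf z})$, and the endpoints agree with the vertex values, so $P_{\hat\omega}$ is globally continuous. Equivariance is manifest: left multiplication by $\hat\phi$ sends each parametrized edge to the corresponding edge over $\hat\phi\hat\psi$, as both sides differ only by premultiplication of the reference path $\hat\rho_\alpha$ (or $\hat\rho_\beta$) by the appropriate group element.

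\textbf{Where the work sits.} Nothing in the argument above is subtle once one has Proposition \ref{P:beststillbest}; the whole construction is a tautological orbit-map on vertices extended by equivariant translates of two reference paths. The only place where one could fail is if an interpolating path slipped out of $\Omega_\best(S,{\bf z})$, and that is exactly what Proposition \ref{P:beststillbest} rules out. The conceptual reason this works is that the isotopies $D_{\alpha,t}$, $D_{\beta,t}$ slide the marked points along $\alpha$ and $\beta$ at the same $\re(\omega)$-speed, so each relative period moves by the same amount as the absolute period of $\alpha$ or $\beta$ on the two endpoints—cancelling out and leaving $\Per_{ij}(\hat\omega_t)=\Per_{ij}(\hat\omega)$ and $\Per(\hat\omega_t)=\Per(\hat\omega)$ throughout.
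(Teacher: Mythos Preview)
Your proof is correct and follows the same approach as the paper's: use Proposition~\ref{P:beststillbest} to obtain the two reference paths $\hat\rho_\alpha,\hat\rho_\beta$ in $\Omega_\best(S,{\bf z})$ from $\hat\omega$ to $\hat D_\alpha^{-1}\cdot\hat\omega$ and $\hat D_\beta^{-1}\cdot\hat\omega$, then extend equivariantly over the Cayley graph. One small point: your sentence ``$\hat D_\alpha^{\pm1}$ and $\hat D_\beta^{\pm1}$ then act on $\Omega_\best(S,{\bf z})$ by bijections (one being the inverse of the other)'' only establishes that $\hat D_\alpha^{-1},\hat D_\beta^{-1}$ send $\Omega_\best$ into itself, not the positive powers---but this is immediate from the $\Mod(S,{\bf z})$--invariance of $\Omega_\best(S,{\bf z})$ already noted in the proof of Corollary~\ref{denseinmf}, which is also what makes your translated edge paths stay in $\Omega_\best$.
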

\begin{proof}
Proposition \ref{P:beststillbest} implies that $t \mapsto D_{\alpha,t}^*(\omega)$ is a path from $\omega$ to
$D_\alpha^*(\omega)$ in $\widetilde \Omega_\best(S,{\bf z})$.  Projecting down to $\Omega_\best(S,{\bf z})$, we obtain
a path from $\hat \omega$ to $\hat D_\alpha^{-1} \cdot \hat \omega$.  Likewise, we obtain a path from $\hat \omega$ to
$\hat D_\beta^{-1} \cdot \hat \omega$.

We now define
$$P_{\hat \omega}: \G( \hat D_\alpha,\hat D_\beta) \to \Omega_\best(S,{\bf z}).$$
We do this first on the edge from $\id$ to $\hat D_\alpha^{-1}$ by sending it to the path from $\hat \omega$ to $\hat
D_\alpha^{-1} \cdot \hat \omega$ and sending the edge from $\id$ to $\hat D_\beta^{-1}$ to the path from $\hat \omega$
to $\hat D_\beta^{-1} \cdot \hat \omega$.  There is now a unique way to extend this to a $\langle \hat D_\alpha, \hat
D_\beta \rangle$--equivariant continuous map.
\end{proof}

\begin{lemma} \label{pseudoexists}
If $\alpha,\beta$ is a twisting pair for $\omega$, then $\langle \hat D_\alpha, \hat D_\beta \rangle < \B(S,{\bf z})$
contains a pseudo-Anosov mapping class $\psi$.
\end{lemma}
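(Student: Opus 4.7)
The plan is to show that $H := \langle \hat D_\alpha, \hat D_\beta \rangle$ is not virtually abelian, and then to invoke the Tits alternative for mapping class groups due to Ivanov and McCarthy, which guarantees that any non-virtually-abelian subgroup of $\Mod(S,{\bf z})$ contains a pseudo-Anosov element.

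First I would use the Birman isomorphism (\ref{E:birmanisomorphism}) to identify $\B(S,{\bf z})$ with $\pi_1(\Conf_n(S), {\bf z})$, and then compose with the homomorphism $\pi_1(\Conf_n(S)) \to \pi_1(S, z_1)$ induced by projection $\Conf_n(S) \to S$ onto the first coordinate; call this composite $p$. The element $\hat D_\alpha$ corresponds under Birman to the loop $t \mapsto D_{\alpha,t}({\bf z}) \in \Conf_n(S)$, whose first coordinate $t \mapsto D_{\alpha,t}(z_1)$ traces out $\alpha$ exactly once at constant speed. Hence $p(\hat D_\alpha) = [\alpha]$ and analogously $p(\hat D_\beta) = [\beta]$, viewed as elements of $\pi_1(S, z_1)$ with the orientations determined by the isotopies.

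The second step is to show that $\langle [\alpha], [\beta] \rangle \subset \pi_1(S, z_1)$ is not virtually abelian. Since $\alpha$ and $\beta$ fill $S$ we have $\I(\alpha, \beta) > 0$, so $\alpha \ne \beta$ as free homotopy classes. Both $[\alpha]$ and $[\beta]$ are primitive in the torsion-free, word-hyperbolic group $\pi_1(S)$, so their centralizers are cyclic; if $[\alpha]$ and $[\beta]$ commuted, then $\beta$ would have to be a power of $\alpha$, contradicting the primitivity and distinctness of the classes. A standard ping-pong argument applied to sufficiently large powers on the Gromov boundary of $\pi_1(S)$ then produces a nonabelian free subgroup of $\langle [\alpha], [\beta] \rangle$.

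Since $H$ surjects onto this non-virtually-abelian subgroup of $\pi_1(S, z_1)$, $H$ itself is not virtually abelian, and the Tits alternative then yields the required pseudo-Anosov $\psi \in H$. The main subtlety I would need to nail down is the identification $p(\hat D_\alpha) = [\alpha]$, which amounts to unwinding the boundary map of the fibration $\Diff_{0,{\bf z}}(S,{\bf z}) \to \Diff_0(S) \to \Conf_n(S)$; the isotopy $D_{\alpha,t}$ provides a canonical lift of the Birman loop, making this direct.
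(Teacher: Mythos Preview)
Your argument has a genuine gap: the statement you attribute to the Tits alternative is false. The Ivanov--McCarthy Tits alternative says that every subgroup of $\Mod(S,{\bf z})$ either contains a nonabelian free group or is virtually abelian; it does \emph{not} say that a non-virtually-abelian subgroup contains a pseudo-Anosov. For a counterexample, take a proper essential subsurface $\Sigma' \subset (S,{\bf z})$ and two simple closed curves $a,b \subset \Sigma'$ that fill $\Sigma'$. Then $\langle T_a, T_b \rangle$ contains a free group (indeed a pseudo-Anosov of $\Sigma'$), so is not virtually abelian, yet it fixes each component of $\partial \Sigma'$ and hence is reducible in $\Mod(S,{\bf z})$ and contains no pseudo-Anosov of $(S,{\bf z})$.

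What is actually needed is \emph{irreducibility}: Ivanov's Theorem 5.9 says that an infinite irreducible subgroup contains a pseudo-Anosov. The paper establishes irreducibility directly, observing that $\hat D_\alpha$ and $\hat D_\beta$ are multitwists in $\partial N_\epsilon(\alpha)$ and $\partial N_\epsilon(\beta)$, and that since $\alpha,\beta$ fill $S$ the curves $\partial N_\epsilon(\alpha), \partial N_\epsilon(\beta)$ fill $(S,{\bf z})$; hence every $\delta \in \C^0(S,{\bf z})$ is moved by some power of $\hat D_\alpha$ or $\hat D_\beta$, so no curve has finite $H$--orbit. Your projection $p:H \to \pi_1(S,z_1)$ is correct and does show $H$ is not virtually abelian, but this conclusion is simply not strong enough, and the projection gives no obvious purchase on irreducibility. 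You are using only $\I(\alpha,\beta)>0$, whereas the full filling hypothesis is what drives the paper's argument.
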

\begin{proof}
Since $\alpha,\beta$ is a twisting pair, they fill $S$.  This implies that $\partial N_\epsilon(\alpha),\partial
N_\epsilon(\beta)$ fill $(S,{\bf z})$ and so for any $\delta \in \C^0(S,{\bf z})$, $\I(\delta,\partial
N_\epsilon(\alpha)) \neq 0$ or $\I(\delta,\partial N_\epsilon(\beta)) \neq 0$. Since $\hat D_\alpha$ and $\hat D_\beta$
are multitwists in $N_\epsilon(\alpha)$ and $N_\epsilon(\beta)$, respectively, it follows that $\hat D_\alpha^k(\delta)
\neq \delta$ or $\hat D_\beta^k(\delta) \neq \delta$ for all $k \neq 0$.

In particular, there is no curve  $\delta$ with a finite $\langle \hat D_\alpha,\hat D_\beta \rangle$--orbit. That is,
a finite index pure subgroup of $\langle \hat D_\alpha,\hat D_\beta \rangle$ is an \textit{irreducible} subgroup of
$\B(S,{\bf z}) < \Mod(S,{\bf z})$. According to a theorem of Ivanov---Theorem 5.9 of \cite{ivanov}---there exists a
pseudo-Anosov mapping class $\psi \in \langle \hat D_\alpha,\hat D_\beta \rangle$.
\end{proof}

Given $\omega \in \widetilde \Omega_\best(S)$ and a twisting pair $\alpha,\beta$ for $\omega$, we let $\langle \hat
D_\alpha,\hat D_\beta \rangle$ be the associated group, and $\psi$ a pseudo-Anosov element guaranteed by  Lemma
\ref{pseudoexists}. We let $\mu_s$ and $\mu_u$ denote stable and unstable foliations for $\psi$, respectively (unique
up to scalar multiple).

\begin{lemma} \label{pathtostable}
There exists a path connecting $|\re(\hat \omega)|$ to $\mu_s$ in $\AF(S,{\bf z})$.
\end{lemma}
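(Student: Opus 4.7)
The plan is to use the equivariant map $P_{\hat\omega}$ together with the attracting dynamics of $\hat\psi$ on $\PMF(S,{\bf z})$ to build a ray in $\AF(S,{\bf z})$ that, after a simple intersection-number normalization, converges in $\MF(S,{\bf z})$ to a representative of $\mu_s$.

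First, I would fix a word of length $L$ in $\hat D_\alpha^{\pm 1},\hat D_\beta^{\pm 1}$ spelling $\psi$; the corresponding edge path in $\G(\hat D_\alpha,\hat D_\beta)$ maps under $P_{\hat\omega}$ to a path $\tau_0\colon [0,L] \to \Omega_\best(S,{\bf z})$ from $\hat\omega$ to $\hat\psi \cdot \hat\omega$. Post-composing with $|\re|$ and invoking Proposition \ref{bestarational} together with the $\Mod(S,{\bf z})$-equivariance from Lemma \ref{equivariant} gives a path $\sigma_0 = |\re| \circ \tau_0\colon [0,L] \to \AF(S,{\bf z})$ from $|\re(\hat\omega)|$ to $\hat\psi \cdot |\re(\hat\omega)|$. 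Concatenating the translates $\hat\psi^n \cdot \sigma_0$ then produces a continuous $\sigma\colon [0,\infty) \to \AF(S,{\bf z})$ whose restriction to $[nL,(n+1)L]$ sends $nL+s$ to $\hat\psi^n \cdot \sigma_0(s)$.

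Next I would show that $\mathbb P(\sigma(t)) \to \mathbb P(\mu_s)$ in $\PMF(S,{\bf z})$ as $t \to \infty$. Each $\sigma_0(s) = |\re(\hat\omega')|$ for some $\hat\omega' \in \Omega_\best$, and therefore has a transversely orientable representative with only even-prong singularities. Applying Lemma \ref{nonorientstable} to the pseudo-Anosov $\hat\psi^{-1} \in \B(S,{\bf z})$, whose stable foliation is $\mu_u$, shows that every representative of $\mu_u$ carries a $1$-prong singularity, so $\mu_u$ is non-orientable. Since orientability is preserved by the action of $\Mod(S,{\bf z})$, the image $\sigma([0,\infty))$ avoids $\mathbb R_+ \mu_u$, and $\mathbb P(\sigma_0([0,L]))$ is a compact subset of $\PMF(S,{\bf z}) \setminus \{\mathbb P(\mu_u)\}$. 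The uniform convergence of $\hat\psi^n$ to the constant map $\mathbb P(\mu_s)$ on such compact sets then yields the claimed projective convergence.

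Finally, I would fix $\alpha_0 \in \C^0(S,{\bf z})$ and set $c_0 = \I(\alpha_0,|\re(\hat\omega)|) > 0$ (positive by arationality of $|\re(\hat\omega)|$), and define
\[
\bar\sigma(t) = \frac{c_0}{\I(\alpha_0,\sigma(t))}\,\sigma(t) \in \AF(S,{\bf z}),
\]
which is continuous (intersection number is continuous and strictly positive on $\AF$) with $\bar\sigma(0) = |\re(\hat\omega)|$. Because $\I(\alpha_0,\bar\sigma(t)) \equiv c_0$, the projective convergence of $\sigma(t)$ to $\mathbb P(\mu_s)$ upgrades to convergence of $\bar\sigma(t)$ in $\MF(S,{\bf z})$ to the unique representative $\mu_s'$ of $\mathbb P(\mu_s)$ satisfying $\I(\alpha_0,\mu_s')=c_0$. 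Extending $\bar\sigma$ to $[0,\infty]$ by this limit and reparameterizing produces a continuous path from $|\re(\hat\omega)|$ to $\mu_s'$, which is a representative of $\mu_s$ as required. The main subtlety is this last passage from $\PMF$-convergence to $\MF$-convergence; the intersection-number normalization handles it cleanly, provided the ray avoids $\mathbb R_+\mu_u$, which is precisely where the contrast between orientability of the $|\re|$-images and the non-orientability of $\mu_u$ forced by Lemma \ref{nonorientstable} enters.
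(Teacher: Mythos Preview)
Your proof is correct and follows essentially the same approach as the paper's: build a $\psi$-equivariant ray in $\AF(S,{\bf z})$ via $P_{\hat\omega}$ and the Cayley-graph path through powers of $\psi$, use Lemma \ref{nonorientstable} (applied to $\psi^{-1}$) to ensure the ray misses $\mathbb P(\mu_u)$, and then invoke the source-sink dynamics of $\psi$ on $\PMF(S,{\bf z})$ to get projective convergence to $\mathbb P(\mu_s)$. The only cosmetic difference is in the passage from $\PMF$ to $\MF$: the paper simply observes that the $\mathbb R_+$-bundle $\MF \to \PMF$ has a section and lifts the projective path, whereas you make this explicit via the intersection-number normalization $\sigma(t) \mapsto \frac{c_0}{\I(\alpha_0,\sigma(t))}\sigma(t)$, which is precisely such a section over $\PAF(S,{\bf z})$.
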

\begin{proof}
It is more convenient to work in the space $\PMF(S,{\bf z})$ and we write $\mathbb P: \MF(S,{\bf z}) \to \PMF(S,{\bf
z})$ for the quotient map.  Since the fibers of $\Phi$ are homeomorphic to $\mathbb R_+$ and since there exists a
section of $\mathbb P$, a path between projective classes $\mathbb P(|\re(\hat \omega)|)$ and $\mathbb P(\mu_s)$ easily
implies the existence of a path between any representatives $|\re(\hat \omega)|$ and $\mu_s$.  The advantage to working
in $\PMF(S,{\bf z})$ is that we can appeal to the dynamics as described in Section \ref{foliationsection}.

Consider the path in $\G(\hat D_\alpha, \hat D_\beta)$ given by $[\rm{Id},\psi] \cup [\psi,\psi^2] \cup [\psi^2,\psi^3]
\cup ... $, where $[\rm{Id},\psi]$ is a geodesic from $\rm{Id}$ to $\psi$, and $[\psi^k,\psi^{k+1}]$ is the image of
this geodesic under $\psi^k$. We can parameterize this
$$f: [0,1) \to [\rm{Id},\psi] \cup [\psi,\psi^2] \cup ...$$
sending $[0,\frac{1}{2}]$ linearly onto the first segment, $[\frac{1}{2},\frac{3}{4}]$ linearly onto the second
segment, and so on.

\begin{claim} $h = \mathbb P \circ |\re| \circ P_{\hat \omega} \circ f:[0,1) \to \PMF(S,{\bf z})$ extends to a continuous
map defined on $[0,1]$ by setting $h(1) = \mathbb P(\mu_s)$.\end{claim}
\begin{proof}[Proof of claim]
Let $\{x_k\}$ be any sequence in $[0,1)$ tending to $1$ and we show that $h(x_k) \to \mathbb P(\mu_s)$.  Lemma
\ref{nonorientstable} implies that $\mathbb P(\mu_u) \not \in h([0,1))$.  Since
$$h([0,1)) = \bigcup_{j=0}^\infty \psi^j(h([0,1/2]))$$
it follows that for each $k$, there is a $j(k)$ so that $f(x_k) \in [\psi^{j(k)},\psi^{j(k)+1}]$.  Since $x_k \to 1$,
it must be that $j(k) \to \infty$ as $k \to \infty$.

Now let $V$ be any neighborhood of $\mathbb P(\mu_s)$ in $\PMF(S,{\bf z}) - \{ \mathbb P(\mu_u)\}$. Since
$h([0,\frac{1}{2}])$ is compact, there exists $J > 0$ so that for all $j \geq J$, $\psi^j(h([0,\frac{1}{2}])) \subset
V$.  By the previous paragraph there exists $K > 0$ so that for all $k \geq K$, $j(k) \geq J$, which implies
\[ h(x_k) = \mathbb P \circ |\re| \circ P_{\hat \omega} \circ f(x_k) \in \mathbb P \circ |\re| \circ P_{\hat \omega} \left( [\psi^{j(k)},\psi^{j(k)+1}]
\right). \] Since we also have
\begin{align*}
\mathbb P \circ |\re| \circ P_{\hat \omega} \left( [\psi^{j(k)},\psi^{j(k)+1}] \right) &= \mathbb P \circ |\re| \circ P_{\hat \omega} \left( \psi^{j(k)}([\rm{Id},\psi]) \right)\\
 &= \psi^{j(k)}\left( \mathbb P \circ |\re| \circ P_{\hat \omega} \circ f \left( \left[0,1/2\right] \right) \right)\\
 &= \psi^{j(k)}\left( h\left(\left[0,1/2\right] \right) \right) \subset V. \end{align*}
it follows that $h(x_k) \in V$. This completes the proof of the claim.
\end{proof}

The image of $h:[0,1] \to \PMF(S,{\bf z})$ is contained in $\PAF(S,{\bf z})$, and connects $\mathbb P(|\re(\hat
\omega)|)$ to $\mathbb P(\mu_s)$, as required.
\end{proof}

\subsection{The open cover}

The definition of a twisting pair for $\omega \in \widetilde \Omega_*(S)$ had only one condition which involved
$\omega$. Namely, we required that each of $\alpha,\beta$ be transverse to $\F(\re(\omega))$.  It is not surprising
then that a twisting pair for $\omega$ is also a twisting pair for elements of $\widetilde \Omega_*(S)$ which are
sufficiently close to $\omega$.

\begin{lemma} \label{pairsnearby}
Given $\omega \in \widetilde \Omega_*(S)$ and a twisting pair $\alpha,\beta$ for $\omega$ there is a neighborhood $U'$
of $\omega$ so that for all $\eta \in U'$, $\alpha,\beta$ is a twisting pair for $\eta$.
\end{lemma}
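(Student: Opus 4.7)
The plan is to observe that the four defining conditions of a twisting pair split into purely topological conditions (unaffected by $\omega$) and open conditions on $\omega$, so every condition stable under hypothesis is stable under small perturbations of $\omega$.

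First, I would note that three of the four conditions—that $\alpha$ and $\beta$ meet transversely and minimally, that they fill $S$, and that ${\bf z} \subset \alpha \cap \beta$—depend only on the topology of the pair $(\alpha,\beta)$ relative to $(S,{\bf z})$, and do not involve $\omega$ at all. Since they hold for $\omega$, they hold automatically for every $\eta \in \widetilde\Omega_*(S,{\bf z})$. Thus the proof reduces to showing that the two remaining sub-conditions of the fourth bullet—disjointness of $\alpha \cup \beta$ from $\Zeros(\eta)$, and transversality of $\alpha,\beta$ to $\F(\re(\eta))$—each cut out open subsets of $\widetilde\Omega(S)$ containing $\omega$.

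For the disjointness condition, I would invoke the fact (used in the proof of Lemma \ref{codimension2}) that $\Zeros$ defines a continuous map from $\widetilde\Omega(S)$ to the $(2g-2)$-fold symmetric product of $S$. Because $\Zeros(\omega)$ is a finite set and the set $\alpha \cup \beta$ is compact and disjoint from it, continuity yields a neighborhood $V_1$ of $\omega$ such that $\Zeros(\eta) \cap (\alpha \cup \beta) = \emptyset$ for every $\eta \in V_1$. For the transversality condition, I would use the equivalence: $\alpha$ is transverse to $\F(\re(\eta))$ (with $\alpha$ disjoint from $\Zeros(\eta)$) if and only if the pulled-back $1$--form $\alpha^*(\re(\eta))$ is nowhere-vanishing on $S^1$. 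Continuity of $\eta \mapsto \re(\eta)$ (in the $C^\infty$--topology coming from the bundle structure on $\widetilde\Omega(S)$) together with compactness of $S^1$ implies that the nowhere-vanishing condition is open, giving a neighborhood $V_2$ of $\omega$ on which $\alpha$ remains transverse to $\F(\re(\eta))$. The analogous argument for $\beta$ produces $V_3$. Setting $U' = V_1 \cap V_2 \cap V_3$ yields the desired neighborhood.

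The argument has no real main obstacle: it is essentially a catalog verifying that every analytic ingredient of the definition of a twisting pair is an open condition. The only care required is to be explicit about the topology on $\widetilde\Omega(S)$ and to invoke compactness of $\alpha \cup \beta$ when passing from pointwise nonvanishing to a uniform neighborhood in $\widetilde\Omega(S)$.
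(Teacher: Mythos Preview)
Your proof is correct and follows essentially the same approach as the paper: both reduce to the observation that transversality of a compact curve $\gamma$ to $\F(\re(\eta))$ is equivalent to the restriction $\re(\eta)|_\gamma$ being nowhere vanishing, which is an open condition by compactness. The only difference is that you treat the condition $\alpha \cup \beta$ disjoint from $\Zeros(\eta)$ separately via continuity of the zero locus, whereas the paper absorbs it into the transversality condition (if $\re(\eta)|_\alpha$ is nowhere vanishing then in particular $\eta$ has no zero on $\alpha$), making the separate $V_1$ step redundant.
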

\begin{proof}
By definition, the underlying foliation $\F(\re(\omega))$ is obtained by integrating $\ker(\re(\omega))$.  Thus, the
condition that a curve $\gamma$ be transverse to $\F(\re(\omega))$ is equivalent to requiring that $\re(\omega)$
restricted to $\gamma$ be nonvanishing.  Perturbing the $1$--form $\re(\omega)$ slightly preserves the property that it
is nonvanishing on $\gamma$ since $\gamma$ is compact.  Therefore, there is a neighborhood $U'$ of $\omega$ in
$\widetilde \Omega_*(S)$ so that if $\eta \in U'$, then the restriction of $\re(\eta)$ to both $\alpha$ and $\beta$ is
nonvanishing. It follows that $\alpha,\beta$ is a twisting pair for $\eta$, as required.
\end{proof}

We can now give the
\begin{proof}[Proof of Theorem \ref{connectingneighborhood}]
Fix $\nu \in \widetilde \Omega_*(S)$, and let $\alpha,\beta$ is a twisting pair for $\nu$.  Lemma \ref{pairsnearby}
provides a neighborhood $U'$ so that for all $\omega \in U'$, $\alpha,\beta$ is also a twisting pair for $\omega$. We
let $\hat \psi$ be a pseudo-Anosov mapping class in $\langle \hat D_\alpha,\hat D_\beta \rangle$ as in Lemma
\ref{pseudoexists} and $\mu_s$ its stable foliation.

It follows from the discussions in Sections \ref{S:trivialization} and \ref{abeliansection} that we can locally find a
continuous section of $\widetilde \Omega_*(S) \to \Omega_*(S,{\bf z})$. In particular, there is a neighborhood $U$ of
$\hat \nu$ and a continuous section $\sigma: U \to \widetilde \Omega_*(S)$ with $\sigma(U) \subset U'$. Now, given
$\hat \omega , \hat \eta \in U \cap \Omega_\best(S,{\bf z})$, since $\alpha,\beta$ is a twisting pair for both $\omega$
and $\eta$, Lemma \ref{pathtostable} guarantees paths from $|\re(\hat \omega)|$ to $\mu_s$ and from $|\re(\hat \eta)|$
to $\mu_s$ in $\AF(S,{\bf z})$. Therefore, we can connect $|\re(\hat \omega)|$ and $|\re(\hat \eta)|$ by a path in
$\AF(S,{\bf z})$.  Since $\nu$ was an arbitrary point of $\widetilde \Omega_*(S)$, we have constructed the open cover.
\end{proof}

\subsection{The main theorem for ${\bf z} \neq \emptyset$}

We now put all the ingredients together to prove the main theorem for surfaces of genus at least $2$ and nonempty
marked point set.
\begin{proof}[Proof of Theorem \ref{mainarational}, ${\bf z} \neq \emptyset$.]
Corollary \ref{denseinmf} implies that $|\re|(\Omega_\best(S,{\bf z}))$ is dense in $\AF(S,{\bf z})$. Therefore, to
prove that $\AF(S,{\bf z})$ is connected, we show that for any two points $\hat \omega, \hat \eta \in
\Omega_\best(S,{\bf z})$, there is a path connecting $|\re|(\hat \omega) = |\re(\hat \omega)|$ to $|\re|(\hat \eta) =
|\re(\hat \eta)|$ in $\AF(S,{\bf z})$.  This produces a dense path-connected subset of $\AF(S,{\bf z})$, and so
$\AF(S,{\bf z})$ is connected.

Let $\hat \omega,\hat \eta \in \Omega_\best(S,{\bf z})$ be any two points.  According to Lemma \ref{codimension2} there
is a path $\delta$ in $\Omega_*(S,{\bf z})$ connecting $\hat \omega$ to $\hat \eta$.   Because $\delta$ is compact, the
cover $\mathcal U$ restricted to $\delta$ has a finite subcover.  From this we can produce a finite set $U_0,...,U_k
\in \mathcal U$ such that $\hat \omega \in U_0$, $\hat \eta \in U_k$, and $U_j \cap U_{j+1} \neq \emptyset$ for all $j
= 0,...,k-1$.  For each $j = 0,...,k-1$, appealing to Proposition \ref{bestdense} we can therefore find some element in
the intersection
$$\hat \omega_j \in U_j \cap U_{j+1} \cap \Omega_\best(S,{\bf z})$$
By Theorem \ref{connectingneighborhood}, for every $j = 0,...,k-2$ since $\hat \omega_j$ and $\hat \omega_{j+1}$ are in
$U_{j+1} \cap \Omega_\best(S,{\bf z})$, there is a path in $\AF(S,{\bf z})$ connecting $|\re(\hat \omega_j)|$ and
$|\re(\hat \omega_{j+1})|$.  Likewise, there is a path connecting $|\re(\hat \omega)|$ to $|\re(\hat \omega_0)|$ and
$|\re(\hat \eta)|$ to $|\re(\hat \omega_{k-1})|$.  Concatenating these paths we obtain a path connecting $|\re (\hat \omega)|$
to $|\re(\hat \eta)|$, as required.
\end{proof}

\subsection{Cut points: ${\bf z} \neq \emptyset$} \label{cutpointsection1}

For the applications to Kleinian groups, we need to see that $\AF(S,{\bf z})/\hspace{-.1cm} \sim$ has no cut points,
meaning no points whose removal disconnects.  Together with the proof in the previous section, the following proves
Theorem \ref{endinglams} for the case ${\bf z} \neq \emptyset$.

\begin{theorem} \label{nocut1}
If ${\bf z} \neq \emptyset$ then $\AF(S,{\bf z})/\hspace{-.1cm} \sim$ has no cut points.
\end{theorem}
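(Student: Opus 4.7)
The strategy is to show that for every $p \in \AF(S,{\bf z})/\sim$, the complement $\AF(S,{\bf z})/\sim \setminus \{p\}$ is connected, by constructing a dense path-connected subset. Let $\tilde p \subset \AF(S,{\bf z})$ denote the preimage of $p$ under the quotient map: this is the simplex of transverse measures on the underlying topological foliation of $p$, and in particular is a closed subset of real dimension strictly smaller than $\dim \AF(S,{\bf z})$. I would adapt the proof of Theorem \ref{mainarational} to build a dense path-connected subset of $\AF(S,{\bf z}) \setminus \tilde p$; its image downstairs is the dense path-connected subset of the punctured quotient required for connectedness.

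First, I would establish a refined density statement. Define
\[
\Omega_\best^p(S,{\bf z}) = \{\hat\omega \in \Omega_\best(S,{\bf z}) \mid |\re(\hat\omega)| \notin \tilde p\}.
\]
Using the period-translation computation from the proof of Proposition \ref{bestdense}, I would argue that within any fiber $F_{\bar\omega}$ the bad locus $\{\hat\omega : |\re(\hat\omega)| \in \tilde p\}$ is strictly lower-dimensional than the fiber, so $\Omega_\best^p(S,{\bf z})$ remains dense in $\Omega(S,{\bf z})$ and its image under $|\re|$ is dense in $\AF(S,{\bf z}) \setminus \tilde p$.

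Second, I would upgrade Theorem \ref{connectingneighborhood}: for each $U$ in the open cover and any $\hat\omega, \hat\eta \in U \cap \Omega_\best^p(S,{\bf z})$, one obtains a path in $\AF(S,{\bf z}) \setminus \tilde p$ from $|\re(\hat\omega)|$ to $|\re(\hat\eta)|$. The original path goes through the stable foliation $\mu_s$ of a pseudo-Anosov $\psi \in \langle \hat D_\alpha, \hat D_\beta\rangle$, and I exploit two freedoms to stay off $\tilde p$. First, $\langle \hat D_\alpha, \hat D_\beta\rangle$ contains infinitely many pseudo-Anosovs with pairwise non-$\sim$-equivalent stable foliations---obtained by iterating Lemma \ref{pseudoexists} (via Ivanov's Theorem 5.9) on finite-index subgroups and their conjugates---so I can choose $\psi$ with $\mu_s \notin \tilde p$. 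Second, the path from $|\re(\hat\omega)|$ to $\mu_s$ constructed in Lemma \ref{pathtostable} depends continuously on $\hat\omega \in U \cap \Omega_\best(S,{\bf z})$, and as $\hat\omega$ varies in this dense subset the resulting paths sweep out a family of dimension strictly larger than $\dim \tilde p$; hence a generic perturbation of the starting point pushes the entire path off of $\tilde p$.

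Chaining together as in the proof of Theorem \ref{mainarational} then produces, for any $\hat\omega, \hat\eta \in \Omega_\best^p(S,{\bf z})$, a finite chain of open sets and a concatenation of paths in $\AF(S,{\bf z}) \setminus \tilde p$ connecting $|\re(\hat\omega)|$ and $|\re(\hat\eta)|$, yielding the required dense path-connected subset. The main obstacle is the transversality invoked in the second freedom above: one must verify precisely how the iterates $\psi^j$ and the interpolating segments of Lemma \ref{pathtostable} depend on $\hat\omega$, and confirm that the resulting family of paths covers an open neighborhood (or at least a subset not contained in any lower-dimensional stratum) of each stable foliation. I expect this to parallel the period-translation argument in the proof of Proposition \ref{bestdense}, exploiting the same freedom to translate within a fiber $F_{\bar\omega}$; the payoff is that the entire path, not only its endpoints, can be made to avoid the lower-dimensional set $\tilde p$.
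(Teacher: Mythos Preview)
Your overall strategy---build a dense path-connected subset of the complement of each point---is correct and is exactly what the paper does. But your execution is considerably more complicated than necessary, and the ``second freedom'' (the perturbation argument) is a genuine gap.

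The key structural fact you are missing is the dichotomy furnished by Lemma~\ref{nonorientstable}: every point of the dense set $W$ is either the $|\re|$--image of some $\hat\omega \in \Omega_\best(S,{\bf z})$, or the stable foliation $\mu_s$ of one of the pseudo-Anosov elements $\psi \in \B(S,{\bf z})$ from Lemma~\ref{pseudoexists}, and these two types are \emph{disjoint} because the latter always has a $1$--prong singularity while the former never does. The paper therefore treats the two cases separately. If $[\mu]$ is a $\mu_s$, one simply replaces the hub $\mu_s$ by the unstable foliation $\mu_u$ of the same $\psi$ (the proof of Lemma~\ref{pathtostable} works verbatim for $\psi^{-1}$), and no further modification to $W$ is needed. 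If $[\mu] = [|\re(\hat\omega)|]$, the paper removes from $\Omega_\best(S,{\bf z})$ all $\hat\eta$ with $[|\re(\bar\eta)|] = [|\re(\bar\omega)|]$ on the \emph{closed} surface~$S$; this condition depends only on $\bar\eta$, so it removes entire fibers $F_{\bar\eta}$, and since the paths $P_{\hat\eta}$ of Lemma~\ref{pathtostable} stay inside a single fiber (all the isotopies $D_{\alpha,t}, D_{\beta,t}$ lie in $\Diff_0(S)$), every such path automatically avoids $\tilde p$ with no perturbation needed.

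Your approach tries to avoid this case split by a uniform transversality argument, but the claim that ``a generic perturbation of the starting point pushes the entire path off of $\tilde p$'' is not justified. A dimension count is not enough: you are not in a smooth category, $\tilde p$ is a cone on a simplex sitting inside $\MF(S,{\bf z})$, and the path family $\hat\omega \mapsto (t \mapsto h_{\hat\omega}(t))$ is only continuous. More seriously, the paths for nearby $\hat\omega$ all converge to the \emph{same} point $\mu_s$ as $t \to 1$, so near the terminal end the family collapses and no dimension argument can apply. You could likely repair this by observing, as above, that the entire path except its endpoint lies in $|\re|(F_{\bar\omega})$, and then controlling things via $\bar\omega$ rather than $\hat\omega$---but at that point you have rediscovered the paper's argument.
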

In what follows, we let $[\mu]$ denote the equivalence class in $\AF(S,{\bf z})/\hspace{-.1cm} \sim$ of the measured
foliation $\mu \in \AF(S,{\bf z})$.
\begin{proof}
The dense path connected subset of $\AF(S,{\bf z})$ descends to a dense path connected subset $W$ in $\AF(S,{\bf
z})/\hspace{-.1cm} \sim$. To prove the theorem, it suffices to verify that given any point of $\AF(S,{\bf
z})/\hspace{-.1cm} \sim$, there is a dense path connected subset $W_0$ of the complement.  Let $|\mu| \in \AF(S,{\bf
z})/\hspace{-.1cm} \sim$ be an arbitrary point.  If $|\mu| \not \in W$, then we may take $W = W_0$, so we assume that
$|\mu| \in W$.

Points of $W$ are of two types: (1) the stable foliations of the pseudo-Anosov mapping classes coming from Lemma
\ref{pathtostable} and (2) the points in the image of $|\re|(\Omega_\best(S,{\bf z}))$.  It follows from Lemma
\ref{nonorientstable} that these two subsets of $W$ are disjoint.

Suppose first that $\mu = |\re|(\hat \omega) \in |\re|(\Omega_\best(S,{\bf z}))$, so we are in case (2).  We can define
\[ \Omega'_\best(S,{\bf z}) = \left\{ \, \hat \eta \in \Omega_\best(S,{\bf z}) \, \, \mid \, \, [|\re|(\bar \eta) ] \neq [|\re|(\bar \omega)] = [\mu] \, \right\} \]
The space $\Omega'_\best(S,{\bf z})$ also has the property that it is dense in $\Omega(S,{\bf z})$ and has dense image
in $\AF(S,{\bf z})$.  This set can be used in place of $\Omega_\best(S,{\bf z})$ to build a path connected dense set,
with image $W_0$ in $\AF(S,{\bf z})/\hspace{-.1cm} \sim$.  By construction $[\mu] \not \in W_0$, and so it provides the
required connected dense subset.

Now we assume that $\mu=\mu_s$ is the stable foliation for a pseudo-Anosov $\psi$ as found in Lemma \ref{pathtostable},
and that $[\mu] \in W$.  Such a foliation $[\mu_s] \in W$ must arise from the open cover $\mathcal U$ constructed in
the proof of Theorem \ref{connectingneighborhood} as follows.  For some $U \in \mathcal U$, we showed that
\[ |\re|(\Omega_\best(S,{\bf z}) \cap U) \cup \{\mu_s\} \]
is path connected.  Since the unstable foliation $\mu_u$ for $\psi$ is the stable foliation for $\psi^{-1}$, it follows
that the same proof shows that
\[ |\re|(\Omega_\best(S,{\bf z}) \cap U) \cup \{\mu_u\} \]
is path connected.  In particular, we see that
\[W_0 = \left( W-\{[\mu_s]\} \right) \cup \{[\mu_u]\}\]
is a dense path connected subset disjoint from $[\mu_s]$, as required.
\end{proof}

\begin{remark}
By connecting an appropriately chosen pair of points to both the stable and the unstable foliations of the
pseudo-Anosov mapping class $\psi$, we can construct maps of circles into $\AF(S,{\bf z})$.  From these, one can find
embedded circles.
\end{remark}

\section{Closed surfaces}

The proof of Theorem \ref{mainarational} in the case ${\bf z} = \emptyset$ uses the case ${\bf z} \neq \emptyset$.  The
argument involves branched covers, so we begin with some elementary observations.

\subsection{Branched covers}

Let ${\bf z'} \subset S'$ be a finite set.  We say that a branched cover $f:S \to S'$ is \textbf{properly branched over
${\bf z'}$} if $f$ is a covering map from the complement of $f^{-1}({\bf z'})$ in $S$ to the complement of ${\bf z'}$
in $S'$ and if the restriction to any point of $f^{-1}({\bf z'})$ has no neighborhood on which $f$ is injective.  Said
differently, ${\bf z'}$ is precisely the branching locus in $S'$, and $f$ nontrivially branches at every point of
$f^{-1}({\bf z'})$.

\begin{proposition} \label{takingcover}
Suppose $f:S \to S'$ is a branched cover, properly branched over ${\bf z'} \subset S'$.  Then there is an embedding
$$f^*:\MF(S',{\bf z'}) \to \MF(S)$$
Moreover, $f^*(\AF(S',{\bf z'})) \subset \AF(S)$.
\end{proposition}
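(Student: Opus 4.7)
The plan is to define $f^{*}$ as the pullback of measured foliations via $f$, then verify it is a well-defined embedding that carries arational foliations to arational foliations. I would set $f^{*}(\F',\mu') = (f^{*}\F',\, f^{*}\mu')$, obtained by pulling back leaves and the transverse measure pointwise. Away from the branch locus $f^{-1}({\bf z'})$, $f$ is a covering map, so $(f^{*}\F', f^{*}\mu')$ is a measured foliation whose singularities at non-branch points inherit the $p$-prong type with $p \geq 3$ from $\F'$. At a branch point $z \in f^{-1}({\bf z'})$ of local degree $d_z \geq 2$, a $p$-prong singularity of $\F'$ at $f(z) \in {\bf z'}$ with $p \geq 1$ pulls back to a $pd_z$-prong singularity at $z$, and $pd_z \geq 2$. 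The borderline case $pd_z = 2$ (occurring only when $p=1$ and $d_z=2$) produces a $2$-prong, which is locally just a regular leaf, so $f^{*}(\F',\mu')$ is a genuine measured foliation on the closed surface $S$.

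Next, I would check that $f^{*}$ descends to equivalence classes $\MF(S',{\bf z'}) \to \MF(S)$. Equivalence in $\MF(S',{\bf z'})$ is generated by isotopies of $(S',{\bf z'})$ and Whitehead moves (Th\'eor\`eme 1 of Expos\'e 11 in \cite{FLP}); isotopies lift to isotopies of $S$ fixing $f^{-1}({\bf z'})$, and Whitehead moves pull back to simultaneous Whitehead moves along the preimages of the relevant saddle connections. For injectivity and the embedding property, I would use intersection-number coordinates: for $\alpha' \in \C^{0}(S',{\bf z'})$ represented in $S' \setminus {\bf z'}$, the preimage $f^{-1}(\alpha')$ is a disjoint union of simple closed curves in $S$, each essential by applying covering-space theory to $f\colon S \setminus f^{-1}({\bf z'}) \to S' \setminus {\bf z'}$. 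A direct pullback computation would then yield
\[
\I(\alpha,\, f^{*}\mu') \;=\; \deg(f|_\alpha)\cdot \I(\alpha',\mu')
\]
for each component $\alpha \subset f^{-1}(\alpha')$. Since this determines the intersection coordinates of $\mu'$ from those of $f^{*}\mu'$, $f^{*}$ is injective, and continuity of the pullback intersection numbers in both directions gives the embedding.

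For arationality, I would argue by contradiction: suppose $\mu' \in \AF(S',{\bf z'})$ but $f^{*}\F'$ admits a leaf cycle $\gamma$ on $S$, i.e. an embedded closed loop of saddle connections. Each saddle connection of $f^{*}\F'$ projects via $f$ to a saddle connection of $\F'$, since the singularities of $f^{*}\F'$ lie in $f^{-1}(\sing(\F') \cup {\bf z'})$, which maps into $\sing(\F') \cup {\bf z'}$. Thus $f(\gamma) \subset S'$ is a closed concatenation of saddle connections of $\F'$. From $f(\gamma)$ I would extract an embedded sub-cycle: if $\gamma$ avoids $f^{-1}({\bf z'})$, cutting $f(\gamma)$ at an innermost self-intersection produces an embedded closed loop in $S' \setminus {\bf z'}$; otherwise $f(\gamma)$ visits ${\bf z'}$ at least once, and cutting at successive visits to ${\bf z'}$ followed by removing innermost interior self-loops yields an embedded sub-arc with both endpoints in ${\bf z'}$. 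In either case the extracted sub-cycle is a leaf cycle of $\F'$ on $(S',{\bf z'})$, contradicting arationality. The main obstacle in this plan is precisely this extraction step: verifying that the (possibly non-embedded) push-forward $f(\gamma)$ always contains an embedded sub-leaf-cycle of the correct endpoint type, which requires careful bookkeeping of how $\gamma$ interacts with the branch locus $f^{-1}({\bf z'})$.
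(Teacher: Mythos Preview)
Your construction of $f^{*}$ and your treatment of arationality follow the same lines as the paper's proof; the paper is simply terser, asserting that ``the $f$--image of a leaf cycle for $f^{*}(\F)$ can be used to construct one for $\F$.''  Your worry about the extraction step is somewhat overstated: saddle connections intersect only at singularities (leaves of a foliation do not cross), so $f(\gamma)$ is a closed walk in the embedded graph with vertex set $\sing(\F') \cup {\bf z'}$ and edge set the saddle connections.  Any closed walk in a graph contains a simple cycle, and that simple cycle is automatically an embedded leaf cycle in $(S',{\bf z'})$.

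Where you genuinely diverge from the paper is in establishing that $f^{*}$ is an embedding.  The paper bypasses intersection numbers entirely: it fixes complex structures making $f$ holomorphic, invokes the Hubbard--Masur/Gardiner homeomorphisms $Q(S',{\bf z'}) \cong \MF(S',{\bf z'})$ and $Q(S) \cong \MF(S)$, and observes that the pullback $f^{*}:Q(S',{\bf z'}) \to Q(S)$ is a linear injection of finite-dimensional vector spaces---hence automatically a topological embedding---compatible with the vertical-foliation maps.  (The paper in fact remarks that a train-track argument is also possible, but opts for this route.)

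Your intersection-number approach is more elementary but leaves two points to be filled in.  First, you assert that each component $\alpha$ of $f^{-1}(\alpha')$ is essential in $S$; this needs an argument, since a priori $\alpha$ could bound a disk containing points of $f^{-1}({\bf z'})$.  (It is true: if $\alpha$ bounded a disk $D$, then $f|_D$ would be a branched cover of one complementary component of $\alpha'$, and a Riemann--Hurwitz count using proper branching and genus$(S') \geq 2$ yields a contradiction.)  Second, your formula $\I(\alpha,f^{*}\mu') = \deg(f|_\alpha)\cdot\I(\alpha',\mu')$ requires care in the $\geq$ direction: a representative $\alpha_0$ of $\alpha$ in $S$ may be homotoped through $f^{-1}({\bf z'})$, so $f(\alpha_0)$ is only homotopic to a power of $\alpha'$ in $S'$, not in $S' - {\bf z'}$, and one must check that this does not lower the total variation below $\deg(f|_\alpha)\cdot\I(\alpha',\mu')$.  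The quadratic-differential argument buys you freedom from both of these verifications.
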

\begin{proof}
Given a measured foliation $(\F,\mu)$ on $(S',{\bf z'})$ there is a natural way of defining a measured foliation on
$S$, denoted $f^*(\F,\mu)$ as follows.  We define the underlying foliation $f^*(\F)$ so that the leaves are precisely
the preimages of leaves on $S'$.  The transverse measure, denoted $f^*(\mu)$ is defined by declaring the measure of an
arc on $S$ to be the measure of its image in $S'$. The $f$--image of a leaf cycle for $f^*(\F)$ can be used to
construct one for $\F$, and so $f^*(\AF(S',{\bf z'})) \subset \AF(S)$.

We must therefore show that $f^*$ is an embedding. One proof of this appeals to the theory of train tracks. We give a
different proof using quadratic differentials.

Fix a complex structure on $S'$ and one on $S$ so that $f:S \to S'$ is holomorphic.  According to the work of
Hubbard--Masur \cite{hubbardmasur} and Gardiner \cite{gardiner}, the space $Q(S',{\bf z'})$ of integrable meromorphic
quadratic differentials on $S'$ with the only poles at ${\bf z'}$ is naturally homeomorphic to $\MF(S',{\bf z'})$ (see
also Marden--Strebel \cite{mardenstrebel}). The homeomorphism is given by sending a quadratic differential $q \in
Q(S',{\bf z'})$ to the measure class of its vertical foliation $[\F(q),\mu(q)]$.  Likewise, the space of holomorphic
quadratic differentials $Q(S)$  is naturally homeomorphic to $\MF(S)$.

The pullback
$$f^*:Q(S',{\bf z'}) \to Q(S)$$
is an embedding, and one checks that
$$f^*(\F(q)) = \F(f^*(q)) \quad \mbox{ and } \quad f^*(\mu(q)) = \mu(f^*(q)).$$
It follows that
$$f^*:\MF(S',{\bf z'}) \to \MF(S)$$
is an embedding, as required.
\end{proof}

\subsection{Graph of involutions}

Let $\sigma$ be an involution of $S$ with nonempty fixed point set.  We write
$$f_\sigma:S \to S_\sigma = S/\langle \sigma \rangle$$
for the quotient.  If $f$ is properly branched over ${\bf z}_\sigma \subset S_\sigma$ (${\bf z}_\sigma \neq
\emptyset$), and $S_\sigma$ has genus at least $2$, then we say that $\sigma$ is an \textbf{allowable involution}.

Fix an allowable involution $\sigma$ and we define the \textbf{graph of $\sigma$--involutions} $\frak G_\sigma(S)$ as
follows. The vertex set of $\frak G_\sigma(S)$ is in a one-to-one correspondence with the $\Mod(S)$ conjugates of
$\sigma$. If $\sigma_0$ and $\sigma_1$ are conjugates of $\sigma$, then we connect the associated vertices (also
denoted $\sigma_0$ and $\sigma_1$) by an edge if and only if
\begin{itemize}
\item $G = \langle \sigma_0,\sigma_1 \rangle$ is a finite group and
\item the quotient $f_G:S \to S_G = S/G$ is properly branched over ${\bf z}_G$ and
$(S_G,{\bf z}_G)$ admits a pseudo-Anosov mapping class.
\end{itemize}

\begin{theorem} \label{involutiongraph}
For each closed surface $S$ of genus at least $4$, there exists an allowable involution $\sigma$ so that $\frak
G_\sigma(S)$ is connected.
\end{theorem}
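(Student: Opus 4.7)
The plan is to construct an explicit allowable involution $\sigma$ and then to show graph connectivity by identifying the setwise stabilizer of the component of $\sigma$ with all of $\Mod(S)$. By Riemann--Hurwitz, an involution of $S = S_g$ with $k$ fixed points and quotient genus $g_\sigma$ satisfies $g = 2g_\sigma - 1 + k/2$; I would take $(k, g_\sigma) = (2, g/2)$ when $g$ is even and $(4, (g-1)/2)$ when $g$ is odd. In both cases the hypothesis $g \geq 4$ forces $g_\sigma \geq 2$, so the deck involution of the associated branched double cover $f_\sigma \colon S \to S_\sigma$ is an allowable involution. Now observe that $\Mod(S)$ acts on $\frak G_\sigma(S)$ by graph automorphisms---transitively on vertices via conjugation, and preserving edges since the defining condition on $G = \langle \sigma_0, \sigma_1\rangle$ (finite, with $(S_G, {\bf z}_G)$ admitting a pseudo-Anosov) is manifestly conjugation invariant. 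Hence the setwise stabilizer $H \leq \Mod(S)$ of the component $C(\sigma)$ containing $\sigma$ is a subgroup, and $\frak G_\sigma(S)$ is connected precisely when $H = \Mod(S)$.

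The critical observation is that for every conjugate $\tau$ of $\sigma$ lying in $C(\sigma)$, the centralizer $Z_{\Mod(S)}(\tau)$ is contained in $H$, because $Z_{\Mod(S)}(\tau)$ fixes the vertex $\tau$ and therefore stabilizes its component $C(\tau) = C(\sigma)$. Consequently $H \supseteq \langle Z_{\Mod(S)}(\tau) \mid \tau \in C(\sigma) \rangle$. To populate $C(\sigma)$ with vertices beyond $\sigma$, I would invoke the Birman--Hilden correspondence: the map $Z_{\Mod(S)}(\sigma) \to \Mod(S_\sigma, {\bf z}_\sigma)$ with kernel $\langle \sigma \rangle$ has image a finite-index subgroup $\Gamma$. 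Each involution $\bar \tau \in \Gamma$ that lifts to an involution $\tau$ on $S$ produces a Klein four-group $\langle \sigma, \tau \rangle$ with common quotient $(S_\sigma/\langle \bar\tau\rangle, {\bf z}_G)$. For $g_\sigma \geq 2$ there is an abundant supply of $\bar\tau$ for which (i) the lift $\tau$ is conjugate to $\sigma$ in $\Mod(S)$ (matching fixed-point count and quotient genus) and (ii) the downstairs quotient avoids the low-complexity sphere-with-$\leq 3$-punctures cases, hence admits a pseudo-Anosov. Each such $\bar \tau$ furnishes an edge $\sigma \sim \tau$ in $\frak G_\sigma(S)$ and so places $\tau$ in $C(\sigma)$, adding $Z_{\Mod(S)}(\tau)$ to $H$.

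The main obstacle is to arrange matters so that the collection of centralizers $Z_{\Mod(S)}(\tau)$, as $\tau$ ranges over the vertices constructed in $C(\sigma)$, together generates $\Mod(S)$. The cleanest form of this would be: for some single commuting conjugate $\tau$ constructed above, every Humphries Dehn twist $T_{\gamma_i}$ factors as $T_{\gamma_i} = z_1 z_2$ with $z_1 \in Z_{\Mod(S)}(\sigma)$ and $z_2 \in Z_{\Mod(S)}(\tau)$; note that such a factorization automatically implies $\sigma^{T_{\gamma_i}} = \sigma^{z_2}$ commutes with $\tau$, giving a length-two chain $\sigma \sim \tau \sim \sigma^{T_{\gamma_i}}$ in the graph. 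My approach would be to choose the geometric model of $\sigma$ (and then the Birman--Hilden input $\bar\tau$) so that $\sigma$ and $\tau$ have a highly symmetric common quotient, exploiting the large overlap $Z_{\Mod(S)}(\sigma) \cap Z_{\Mod(S)}(\tau) = Z_{\Mod(S)}(\langle \sigma, \tau\rangle)$, and then iterating if a single $\tau$ does not suffice by introducing further commuting conjugates in $C(\sigma)$. This combinatorial/geometric step is the principal work of the theorem; once it is carried out, $H$ contains a full generating set of $\Mod(S)$ and we conclude $H = \Mod(S)$, proving connectivity of $\frak G_\sigma(S)$.
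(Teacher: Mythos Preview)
Your outline matches the paper's strategy almost exactly: show the $\Mod(S)$-stabilizer $H$ of the component $C(\sigma)$ is all of $\Mod(S)$ by showing each Humphries twist $T_{\gamma_i}$ lies in $H$, which in turn follows from exhibiting a neighbor of $\sigma$ in $C(\sigma)$ centralized by $T_{\gamma_i}$. The paper even uses your parameters $(k,g_\sigma)=(2,g/2)$ in the even case. The gap is that you have not actually done the step you yourself flag as ``the principal work of the theorem.'' Everything through your second paragraph is set-up; the theorem is precisely the assertion that the obstacle in your third paragraph can be overcome, and you offer only a plan (``My approach would be to choose\ldots'') rather than a construction. Producing \emph{some} commuting conjugate $\tau$ via Birman--Hilden is fine, but it does not by itself give the required relationship between $\tau$, $\sigma$, and the Humphries curves, and that relationship is the entire content.

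The paper resolves this with an explicit model rather than an abstract existence argument. It realizes $\sigma$ together with a conjugate $\sigma'$ as the two reflection generators of a dihedral group acting on $S$ (order $g$ for $g$ even, $2g$ for $g$ odd), so that $\{\sigma,\sigma'\}$ is an edge. It then positions the Humphries curves $\gamma_1,\dots,\gamma_{2g+1}$ so that (1) $\gamma_g$ is $\sigma$-invariant, (2) $\gamma_{g-2}$ is $\sigma'$-invariant, and (3) for every $i\neq g$ there is $\phi_i\in Z_{\Mod(S)}(\sigma)$ with $\phi_i(\gamma_{g-2})=\gamma_i$; property~(3) is verified by inspecting one picture. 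Then $\tau_i:=\phi_i\sigma'\phi_i^{-1}$ is a neighbor of $\sigma$ (since $\phi_i$ fixes $\sigma$ and carries the edge $\{\sigma,\sigma'\}$ to $\{\sigma,\tau_i\}$) and $T_{\gamma_i}$ commutes with $\tau_i$, giving the length-two chain $\sigma\sim\tau_i\sim T_{\gamma_i}\sigma T_{\gamma_i}^{-1}$. This is exactly your desired mechanism, but with a different $\tau_i$ for each $i$; what makes the proof go through is the concrete verification of (3), and your proposal supplies no substitute for it.
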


We postpone the proof of this fact and use it to prove Theorem \ref{mainarational} for ${\bf z} = \emptyset$.
\begin{proof}[Proof of Theorem \ref{mainarational} for ${\bf z} = \emptyset$]
We are assuming that $S$ has genus at least $4$, and so according to Theorem \ref{involutiongraph} there exists an
allowable involution $\sigma$ so that $\frak G_\sigma(S)$ is connected.

Let $f_\sigma:S \to S_\sigma = S/\langle \sigma \rangle$ be the corresponding quotient properly branched over ${\bf
z}_\sigma$.  According to Proposition \ref{takingcover}, we have an embedding
$$f_\sigma^* : \AF(S_\sigma,{\bf z}_\sigma) \to \AF(S).$$
It follows from the case ${\bf z} \neq \emptyset$ of Theorem \ref{mainarational} that the subspace
$$X_\sigma = f_\sigma^*(\AF(S_\sigma,{\bf z}_\sigma))$$
is connected.  Since the involution $\sigma_0$ associated to any vertex of $\frak G_\sigma(S)$ is a conjugate of
$\sigma$, we also see that the associated space $X_{\sigma_0}$ is connected.

Observe that $\frak G_\sigma(S)$ admits an obvious action by $\Mod(S)$. Therefore, the set
$$\mathcal X(\sigma) = \bigcup_{\sigma_0 \in \ver(\frak G_\sigma(S))} X_{\sigma_0} \subset \AF(S)$$
is $\Mod(S)$--invariant and hence dense.

Now suppose $\{\sigma_0,\sigma_1\}$ is an edge of $\frak G_\sigma(S)$.  The hypothesis implies that $G = \langle
\sigma_0,\sigma_1 \rangle$ is a finite group and $S_G = S/G$ admits a pseudo-Anosov mapping class. A power of this can
be lifted to a pseudo-Anosov mapping class $\psi$ in $\Mod(S)$.  Moreover, because the branched covering $f_G$ factors
through the branched coverings
$$S \longrightarrow S_{\sigma_0} \longrightarrow S_G \quad \mbox{ and } \quad S \longrightarrow S_{\sigma_1} \longrightarrow S_G$$
we can assume that $\psi$
is also a lift of pseudo-Anosov mapping classes $\psi_0 \in \Mod(S_{\sigma_0},{\bf z}_{\sigma_0})$ and $\psi_1 \in
\Mod(S_{\sigma_1},{\bf z}_{\sigma_1})$.  As such, the stable fixed point of $\psi$ lies in $X_{\sigma_0} \cap
X_{\sigma_1}$. In particular, since these spaces are both connected and nontrivially intersect, it follows that
$X_{\sigma_0} \cup X_{\sigma_1}$ is connected.

Inductively we see that any two vertices $\sigma_0$ and $\sigma_1$ which are connected by an edge path have their
associated sets $X_{\sigma_0}$ and $X_{\sigma_1}$ in the same connected component of $\mathcal X(\sigma)$. Connectivity
of $\frak G_\sigma(S)$ means that every two vertices $\sigma_0$ and $\sigma_1$ are connected by an edge path, and so
$\mathcal X(\sigma)$ is connected. Therefore, $\overline {\mathcal X(\sigma)} = \AF(S)$ is connected.
\end{proof}

\subsection{Proof of Theorem \ref{involutiongraph}}

The proof of Theorem \ref{involutiongraph} divides into two cases depending on whether the genus is even or odd.  Both
proofs are essentially the same, except for the descriptions of the involutions.  We will first describe the involution
and the proof for the case of even genus (with corresponding figures for the case of genus $6$) and explain the proof
in detail. For odd genus, we simply describe the involution, with the remainder of the proof left as an easy exercise.\\

So, suppose that the genus $g$ of $S$ is even.   The dihedral group $D_g$ of order $g$ acts on $S$ with quotient
$f_{D_g}:S \to S_{D_g}= S/D_g$ having genus $1$ properly branched over ${\bf z}_{D_g}$ with $|{\bf z}_{D_g}| = 3$. See
Figure \ref{genus6dihedral} for the case of genus $6$.

\begin{figure}[htb]
\centerline{} \centerline{}
\begin{center}
\ \psfig{file=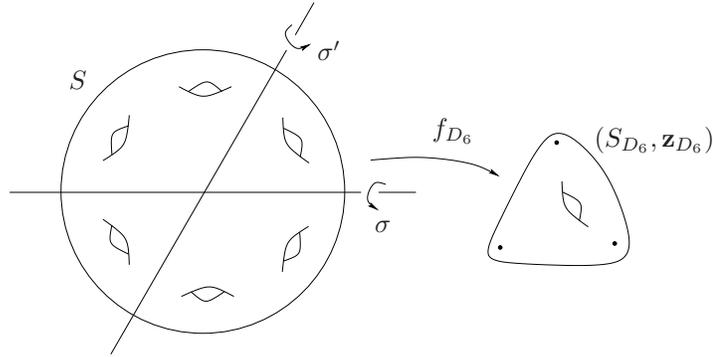,height=1.85truein} \caption{Generators $\sigma$ and $\sigma'$ for $D_6$ and the quotient of $S$.}
\label{genus6dihedral}
\end{center}
  \setlength{\unitlength}{1in}
  \begin{picture}(0,0)(0,0)
    \put(1.1,2){$S$}
    \put(2.7,1.25){$\sigma$}
    \put(2.4,2.15){$\sigma'$}
    \put(3.85,1.7){$(S_{D_6},{\bf z}_{D_6})$}
    \put(3,1.75){$f_{D_6}$}
  \end{picture}
\end{figure}

The group is generated by involutions $\sigma$ and $\sigma'$ which are conjugate in $D_g$ and hence also in $\Mod(S)$.
The quotient $f_\sigma:S \to S_\sigma = S/\langle \sigma \rangle$ has genus $g/2$ and is properly branched over ${\bf
z}_\sigma$ with $|{\bf z}_\sigma| = 2$.  See Figure \ref{genus6inv} for the case that $g = 6$.

\begin{figure}[htb]
\centerline{} \centerline{}
\begin{center}
\ \psfig{file=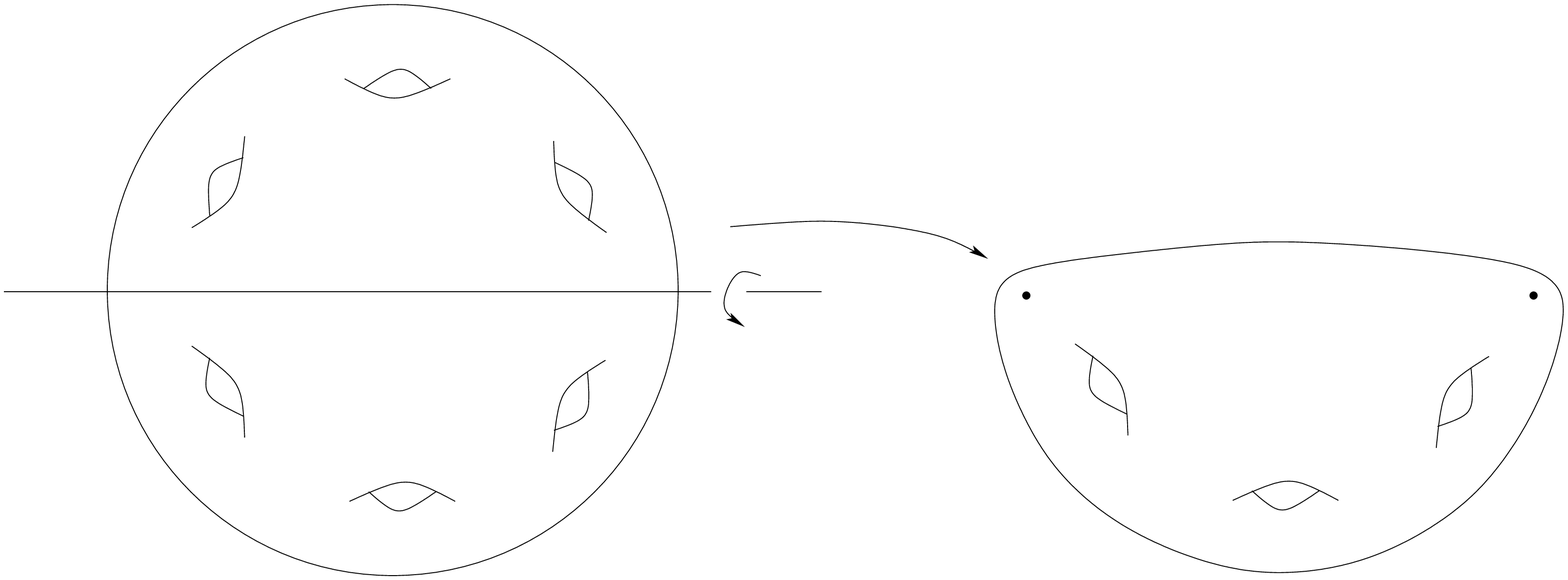,height=1.5truein} \caption{The involution $\sigma$ and the quotient of $S$.}
\label{genus6inv}
\end{center}
  \setlength{\unitlength}{1in}
  \begin{picture}(0,0)(0,0)
    \put(.6,1.8){$S$}
    \put(2.3,1.15){$\sigma$}
    \put(3.85,1.6){$(S_\sigma,{\bf z}_\sigma)$}
    \put(2.6,1.65){$f_\sigma$}
  \end{picture}
\end{figure}

We consider the involution graph $\frak G_\sigma(S)$.  According to the previous paragraph, $\sigma$ and $\sigma'$ are
both vertices of $\frak G_\sigma(S)$, and $\{\sigma,\sigma'\}$ is an edge.

\begin{theorem} \label{T:evencase}
For this choice of $\sigma$, $\frak G_\sigma(S)$ is connected.
\end{theorem}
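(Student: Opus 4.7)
The goal is to show that every vertex of $\frak G_\sigma(S)$ lies in the same connected component as $\sigma$. Since $\Mod(S)$ acts on $\frak G_\sigma(S)$ by graph automorphisms via conjugation, and transitively on the vertex set, it is natural to introduce
\[ H \;=\; \{\,\phi \in \Mod(S) \,\mid\, \phi\sigma\phi^{-1} \in [\sigma]\,\}, \]
where $[\sigma]$ is the connected component of $\sigma$. Because $\Mod(S)$ permutes components, $H$ is precisely the stabilizer of $[\sigma]$ and is therefore a subgroup of $\Mod(S)$; since the $\Mod(S)$-action on vertices is transitive, connectivity of $\frak G_\sigma(S)$ is equivalent to $H=\Mod(S)$.

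The first step will be to populate $H$ with centralizers. Trivially $C_{\Mod(S)}(\sigma)\subseteq H$. Moreover, for any $\tau\in[\sigma]$ and any $\phi\in C_{\Mod(S)}(\tau)$, applying the graph automorphism $\phi$ to a path from $\sigma$ to $\tau$ in $\frak G_\sigma(S)$ produces a path from $\phi\sigma\phi^{-1}$ to $\phi\tau\phi^{-1}=\tau$. Thus $\phi\sigma\phi^{-1}$ is connected to $\tau$, hence to $\sigma$, so $\phi\in H$. Therefore
\[ H \;\supseteq\; \bigl\langle\, C_{\Mod(S)}(\tau) \,:\, \tau\in[\sigma]\,\bigr\rangle. \]
To enlarge the visible part of $[\sigma]$, note that for any $\phi\in C_{\Mod(S)}(\sigma)$, applying $\phi$ to the edge $\{\sigma,\sigma'\}$ produces the edge $\{\sigma,\,\phi\sigma'\phi^{-1}\}$; hence the entire $C_{\Mod(S)}(\sigma)$-orbit of $\sigma'$ consists of neighbors of $\sigma$ in $\frak G_\sigma(S)$, and symmetrically for the $C_{\Mod(S)}(\sigma')$-orbit of $\sigma$. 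By the Birman--Hilden theorem applied to $f_\sigma$ (resp.\ $f_{\sigma'}$), the quotient $C_{\Mod(S)}(\sigma)/\langle\sigma\rangle$ is a finite-index subgroup of $\Mod(S_\sigma,{\bf z}_\sigma)$ (resp.\ for $\sigma'$), so each of these centralizers is already very large, yielding a wealth of neighbors of $\sigma$.

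The main step, which I expect to be the principal obstacle, is to verify
\[ \Mod(S) \;=\; \bigl\langle\, C_{\Mod(S)}(\tau) \,:\, \tau\in[\sigma]\,\bigr\rangle. \]
The plan is to fix a Humphries generating set of Dehn twists $T_{c_1},\ldots,T_{c_N}$ for $\Mod(S)$ and, for each $i$, produce some $\tau_i\in[\sigma]$ preserving $c_i$ setwise; this forces $T_{c_i}\in C_{\Mod(S)}(\tau_i)\subseteq H$ and completes the proof. To produce such $\tau_i$ I would exploit the explicit $D_g$-symmetry depicted in Figures \ref{genus6dihedral} and \ref{genus6inv} (and its obvious analogue in arbitrary even genus): conjugating $\sigma$ or $\sigma'$ by suitable elements of the cyclic rotation subgroup of $D_g$ --- which all lie in $C_{\Mod(S)}(\sigma)$ or $C_{\Mod(S)}(\sigma')$, hence in $H$ --- realizes a whole family of rotation axes, against which a standard list of Humphries curves can be arranged to be invariant. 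This curve-by-curve topological verification, carried out using the explicit symmetric picture of $S$, is the real content of the theorem; the group-theoretic reduction above funnels connectivity of $\frak G_\sigma(S)$ entirely into it.
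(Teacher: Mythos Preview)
Your group-theoretic reduction is exactly the one the paper uses: showing $T_{\gamma_i}\sigma T_{\gamma_i}^{-1}\in[\sigma]$ for each Humphries generator $T_{\gamma_i}$ is equivalent to your $H=\Mod(S)$, and the mechanism in both cases is to exhibit, for each $i$, a vertex $\tau_i\in[\sigma]$ centralized by $T_{\gamma_i}$. The paper's path of length two, $\sigma$ --- $\tau_i$ --- $T_{\gamma_i}\sigma T_{\gamma_i}^{-1}$, is precisely your observation that $C_{\Mod(S)}(\tau_i)\subseteq H$ for $\tau_i\in[\sigma]$.

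Where your plan goes wrong is the final step, the actual production of the $\tau_i$. You propose conjugating $\sigma$ or $\sigma'$ by elements of the cyclic rotation subgroup of $D_g$, asserting these rotations lie in $C_{\Mod(S)}(\sigma)$ or $C_{\Mod(S)}(\sigma')$. This is false: in a dihedral group a nontrivial rotation $r$ satisfies $r\sigma r^{-1}=\sigma r^{-2}\neq\sigma$, so rotations do not centralize reflections. Moreover, even granting that the $D_g$-conjugates of $\sigma$ are neighbors of $\sigma$, their fixed-point sets are a handful of specific axes in $S$; there is no way to arrange all $2g+1$ Humphries curves to be invariant under these finitely many reflections.

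The paper's fix is to use the much larger group $C_{\Mod(S)}(\sigma)$ (which, as you correctly note via Birman--Hilden, is essentially $\Mod(S_\sigma,{\bf z}_\sigma)$) rather than just $D_g$. Concretely, one chooses the Humphries curves so that $\gamma_g$ is $\sigma$-invariant and $\gamma_{g-2}$ is $\sigma'$-invariant, and then observes that for every $i\neq g$ there is some $\phi_i\in C_{\Mod(S)}(\sigma)$ with $\phi_i(\gamma_{g-2})=\gamma_i$. Setting $\tau_i=\phi_i\sigma'\phi_i^{-1}$ gives a neighbor of $\sigma$ (the $\phi_i$-image of the edge $\{\sigma,\sigma'\}$) that fixes $\gamma_i$, exactly as your framework requires. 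So your outline is right; only the source of the conjugating elements needs to be corrected from ``rotations in $D_g$'' to ``the full centralizer of $\sigma$''.
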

\begin{proof}
Recall the Humphries generating set for the mapping class group---see \cite{humphriesgens}---consists of $2g+1$ Dehn
twists $T_{\gamma_1},....,T_{\gamma_{2g+1}}$.  The curves $\gamma_1,...,\gamma_{2g+1}$ can be chosen as in Figure
\ref{genus6gens}.  The relevant features are the following
\begin{enumerate}
\item $\gamma_g$ is invariant by $\sigma$,
\item $\gamma_{g-2}$ is invariant by $\sigma'$,
\item for every $i \neq g$, there exists an element of $\phi_i \in \Mod(S)$ commuting with $\sigma$ taking $\gamma_{g-2}$ to $\gamma_i$,
\end{enumerate}
That we may arrange for the last property is perhaps least obvious, but is an easy exercise given Figure
\ref{genus6gens}.

\begin{figure}[htb]
\centerline{} \centerline{}
\begin{center}
\ \psfig{file=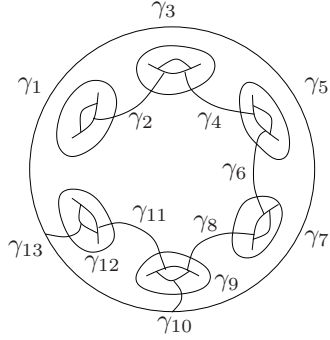,height=1.5truein} \caption{Twisting curves for the Humphries generators}
\label{genus6gens}
\end{center}
  \setlength{\unitlength}{1in}
  \begin{picture}(0,0)(0,0)
    \put(1.6,1.75){$\gamma_1$}
    \put(2.18,1.57){$\gamma_2$}
    \put(2.3,2.15){$\gamma_3$}
    \put(2.54,1.57){$\gamma_4$}
    \put(3.1,1.75){$\gamma_5$}
    \put(2.67,1.3){$\gamma_6$}
    \put(3.1,.95){$\gamma_7$}
    \put(2.52,1.03){$\gamma_8$}
    \put(2.63,.72){$\gamma_9$}
    \put(2.33,.5){$\gamma_{10}$}
    \put(2.2,1.08){$\gamma_{11}$}
    \put(1.95,.82){$\gamma_{12}$}
    \put(1.55,.9){$\gamma_{13}$}
  \end{picture}
\end{figure}

\begin{claim}
There is an edge path in $\frak G_\sigma(S)$ connecting the vertex $\sigma$ to the vertex $T_{\gamma_i} \sigma
T_{\gamma_i}^{-1}$ for any $i = 1,...,2g+1$.
\end{claim}
\begin{proof}[Proof of claim]
According to property (1), $T_{\gamma_g}$ commutes with $\sigma$.  It follows that $T_{\gamma_g} \sigma
T_{\gamma_g}^{-1} = \sigma$, so there is a constant path from $\sigma$ to $T_{\gamma_g} \sigma T_{\gamma_g}^{-1}$.

According to (3) for every $i \neq g$ there exists an element $\phi_i \in \Mod(S)$ commuting with $\sigma$ taking
$\gamma_{g-2}$ to $\gamma_i$.  Then $\phi_i T_{\gamma_{g-2}} \phi_i^{-1} = T_{\gamma_i}$.  Since $T_{\gamma_{g-2}}$
commutes with $\sigma'$ by (2), it follows that $T_{\gamma_i}$ commutes with $\phi_i \sigma' \phi_i^{-1}$. From this,
we see the following edges:
$$\begin{array}{rcl}
\{\phi_i \sigma \phi_i^{-1},\phi_i \sigma' \phi_i^{-1} \} & = & \{ \sigma, \phi_i \sigma' \phi_i^{-1} \}\\
\{T_{\gamma_i} \sigma T_{\gamma_i}^{-1},T_{\gamma_i} (\phi_i \sigma' \phi_i^{-1}) T_{\gamma_i}^{-1} \} & = &
\{T_{\gamma_i} \sigma T_{\gamma_i}^{-1},\phi_i \sigma' \phi_i^{-1} \}\\
\end{array}$$
It follows that $\sigma$ and $T_{\gamma_i} \sigma T_{\gamma_i}^{-1}$ are connected by an edge path (of length 2),
proving the claim.
\end{proof}

Let $\G(\Mod(S))$ denote the Cayley graph of $\Mod(S)$ with respect to the generating set
$\{T_{\gamma_1},...,T_{\gamma_{2g+1}}\}$.  It follows from the claim that there is a continuous equivariant map from
$\G(\Mod(S))$ to $\frak G_\sigma(S)$.  Since $\Mod(S)$ acts transitively on the vertices (by construction) and since
$\G(\Mod(S))$ is connected, it follows that $\frak G_\sigma(S)$ is connected, as required.
\end{proof}

When the genus $g$ is odd, we again choose an involution $\sigma$ for which it and a conjugate $\sigma'$ generate a
dihedral group, this time of order $2g$.  The involutions $\sigma$ and $\sigma'$ are shown in Figure
\ref{genus5dihedral} for the case of genus $5$.

\begin{figure}[htb]
\centerline{} \centerline{}
\begin{center}
\ \psfig{file=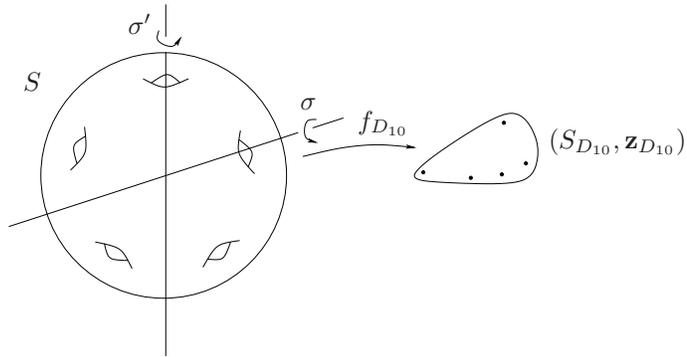,height=1.85truein} \caption{Generators $\sigma$ and $\sigma'$ for $D_{10}$ and the
quotient of $S$.} \label{genus5dihedral}
\end{center}
  \setlength{\unitlength}{1in}
  \begin{picture}(0,0)(0,0)
    \put(1.1,2){$S$}
    \put(1.65,2.3){$\sigma'$}
    \put(2.55,1.9){$\sigma$}
    \put(3.85,1.7){$(S_{D_{10}},{\bf z}_{D_{10}})$}
    \put(2.85,1.8){$f_{D_{10}}$}
  \end{picture}
\end{figure}

\begin{theorem} \label{T:oddcase}
With this choice of $\sigma$, $\frak G_\sigma(S)$ is connected.
\end{theorem}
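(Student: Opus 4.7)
The plan is to mimic the proof of Theorem \ref{T:evencase} verbatim, with the dihedral group now of order $2g$ instead of $g$. Take the Humphries generating set $T_{\gamma_1},\ldots,T_{\gamma_{2g+1}}$ of $\Mod(S)$ and position the curves $\gamma_i$ on $S$ so that they are symmetric with respect to the fixed loci of $\sigma$ and $\sigma'$ shown in Figure \ref{genus5dihedral}. The whole proof reduces to establishing the three properties:
\begin{enumerate}
\item some Humphries curve $\gamma_k$ is $\sigma$--invariant,
\item some other Humphries curve $\gamma_{k'}$ is $\sigma'$--invariant,
\item for every $i \neq k$ there is a mapping class $\phi_i \in \Mod(S)$ commuting with $\sigma$ that sends $\gamma_{k'}$ to $\gamma_i$.
\end{enumerate}
Properties (1) and (2) are arranged by construction, choosing $\gamma_k$ to lie in the fixed axis plane of $\sigma$ and $\gamma_{k'}$ in the fixed axis plane of $\sigma'$.

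Once these are in hand, the internal claim of Theorem \ref{T:evencase} transports with no change. For $i = k$, property (1) gives $T_{\gamma_k}\sigma T_{\gamma_k}^{-1} = \sigma$. For $i \neq k$, conjugating by $\phi_i$ turns the edge $\{\sigma,\sigma'\}$ of $\frak G_\sigma(S)$ into the edge $\{\sigma,\phi_i\sigma'\phi_i^{-1}\}$, and since $T_{\gamma_i} = \phi_i T_{\gamma_{k'}}\phi_i^{-1}$ commutes with $\phi_i\sigma'\phi_i^{-1}$ (by property (2)), the translate $T_{\gamma_i}\cdot\{\sigma,\phi_i\sigma'\phi_i^{-1}\}$ is the edge $\{T_{\gamma_i}\sigma T_{\gamma_i}^{-1},\phi_i\sigma'\phi_i^{-1}\}$; concatenating gives an edge path of length two from $\sigma$ to $T_{\gamma_i}\sigma T_{\gamma_i}^{-1}$. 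Letting $\G(\Mod(S))$ denote the Cayley graph on the Humphries generators, we then have a $\Mod(S)$--equivariant continuous map $\G(\Mod(S)) \to \frak G_\sigma(S)$; together with the transitivity of the $\Mod(S)$--action on the vertices of $\frak G_\sigma(S)$ and the connectivity of $\G(\Mod(S))$, this yields connectivity of $\frak G_\sigma(S)$.

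The main (and really only) obstacle is verifying property (3). In the even case one exploits the $(g/2)$--fold rotational symmetry commuting with $\sigma$ to shuffle the Humphries curves $\gamma_i$, $i \neq g$, among themselves. In the odd case the rotational symmetry commuting with $\sigma$ has order $g$ rather than $g/2$, so the combinatorics of which curves are permuted into which is slightly different, but the abundance of symmetry is if anything greater. Concretely, after drawing the curves $\gamma_1,\ldots,\gamma_{2g+1}$ in the standard chain with $\gamma_k$ placed on the axis of $\sigma$, one checks from the picture that every other Humphries curve can be moved to the distinguished curve $\gamma_{k'}$ by a diffeomorphism that respects $\sigma$. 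This is the exercise the authors refer to, and I expect no genuine difficulty beyond drawing the figure carefully.
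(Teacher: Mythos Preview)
Your proposal is correct and follows exactly the approach the paper itself takes: the paper omits the proof entirely, saying only that it ``follows as in the previous case'' once one verifies that the Humphries generators can be chosen with the analogous properties (1)--(3). Your write-up is in fact more detailed than the paper's, since you spell out how the claim and the Cayley-graph argument transport from the even case; the only caveat is that the informal remark about the rotational symmetry commuting with $\sigma$ is heuristic---the $\phi_i$ need only lie in the centralizer of $\sigma$ in $\Mod(S)$, not in the dihedral group itself---but this does not affect the argument.
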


The proof, which we omit, follows as in the previous case.  One need only verify that the Humphries generators for
$\Mod(S)$ can be chosen with similar properties to those used in the proof of Theorem \ref{T:evencase}.

\subsection{Cut points: ${\bf z} = \emptyset$}

To complete the proof of Theorem \ref{endinglams} in the case of ${\bf z} = \emptyset$, we must prove the counterpart
to Theorem \ref{nocut1}.

\begin{theorem} \label{nocut2}
The space $\AF(S)/\hspace{-.1cm} \sim$ has no cut points.
\end{theorem}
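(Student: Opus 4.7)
The plan is to combine Theorem \ref{nocut1} with the branched-cover decomposition used to prove Theorem \ref{mainarational} in the case ${\bf z}=\emptyset$. Recall that proof produces, for a suitable allowable involution $\sigma$, a dense connected set
\[ \mathcal X(\sigma)/\hspace{-.1cm}\sim \; = \; \bigcup_{\tau \in \ver(\frak G_\sigma(S))} X_\tau/\hspace{-.1cm}\sim \; \subset \; \AF(S)/\hspace{-.1cm}\sim, \]
where each $X_\tau = f_\tau^*(\AF(S_\tau,{\bf z}_\tau))$ is connected and, for every edge $\{\tau_0,\tau_1\}$ of $\frak G_\sigma(S)$, the sets $X_{\tau_0}$ and $X_{\tau_1}$ share the stable foliation of some pseudo-Anosov on $S$ descending to $(S_G,{\bf z}_G)$, where $G=\langle\tau_0,\tau_1\rangle$. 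To prove Theorem \ref{nocut2} I will produce, for every $[\mu] \in \AF(S)/\hspace{-.1cm}\sim$, a dense connected subset of the complement of $[\mu]$.

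First I will observe that the embedding $f_\tau^*\colon \AF(S_\tau,{\bf z}_\tau) \hookrightarrow \AF(S)$ of Proposition \ref{takingcover} descends to an injection on measure-forgetting classes, because an arational (hence minimal, filling) foliation on $S_\tau$ is recovered from its preimage on $S$ simply by pushing forward along $f_\tau$. Under this identification, I set, for each vertex $\tau$,
\[ Y_\tau = (X_\tau/\hspace{-.1cm}\sim) - \{[\mu]\}, \]
which corresponds to either all of $\AF(S_\tau,{\bf z}_\tau)/\hspace{-.1cm}\sim$ or the complement of a single point. In either case, Theorem \ref{nocut1} (together with the already-proved connectivity of $\AF(S_\tau,{\bf z}_\tau)$) implies that $Y_\tau$ is connected. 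I then define
\[ W_0 = \bigcup_{\tau} Y_\tau = (\mathcal X(\sigma)/\hspace{-.1cm}\sim) - \{[\mu]\}. \]

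Density of $W_0$ in $\AF(S)/\hspace{-.1cm}\sim$ will follow from density of $\mathcal X(\sigma)/\hspace{-.1cm}\sim$, using that $\AF(S_\tau,{\bf z}_\tau)/\hspace{-.1cm}\sim$ is a connected space with more than one point and so has no isolated points. For connectedness I will invoke the standard fact that a union of connected sets indexed by a connected graph is connected provided the sets at adjacent vertices have nonempty intersection. Thus along each edge $\{\tau_0,\tau_1\}$ I need a point in $(X_{\tau_0}/\hspace{-.1cm}\sim) \cap (X_{\tau_1}/\hspace{-.1cm}\sim)$ distinct from $[\mu]$.

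This last step is the main (and really only) obstacle: the original proof of Theorem \ref{mainarational} merely chose \emph{some} pseudo-Anosov in $\Mod(S_G,{\bf z}_G)$ to witness $X_{\tau_0}\cap X_{\tau_1}$, whereas here I need one whose lift to $\Mod(S)$ has stable foliation class different from $[\mu]$. I will resolve this by noting that $\Mod(S_G,{\bf z}_G)$ contains infinitely many pseudo-Anosovs with pairwise distinct stable foliation classes (for instance, by conjugating a single one by large powers of any Dehn twist, or by applying the standard Thurston or Penner constructions); by the injectivity of $f_G^*$ on $\sim$-classes these lift to infinitely many distinct elements of $(X_{\tau_0}/\hspace{-.1cm}\sim) \cap (X_{\tau_1}/\hspace{-.1cm}\sim)$, at most one of which can equal $[\mu]$. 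Any other such class is a valid common point of $Y_{\tau_0}$ and $Y_{\tau_1}$, completing the argument.
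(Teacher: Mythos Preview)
Your argument is correct and follows the same overall strategy as the paper: use Theorem \ref{nocut1} to see that each $X_\tau/\hspace{-.1cm}\sim$ survives deletion of $[\mu]$ as a connected set, and then check that adjacent pieces still meet after deleting $[\mu]$.  The only real difference is at this last step.  You manufacture infinitely many intersection points by pulling back stable foliations of infinitely many pseudo-Anosovs on $(S_G,{\bf z}_G)$; the paper instead observes that the \emph{single} pseudo-Anosov $\psi$ already used in the proof of Theorem \ref{mainarational} contributes both its stable and its unstable foliation to $X_{\tau_0}\cap X_{\tau_1}$, giving two distinct $\sim$--classes in the intersection, so removing $[\mu]$ leaves at least one.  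This is shorter and sidesteps the small obligation you incur of knowing that $f_G^*(\AF(S_G,{\bf z}_G)) \subset X_{\tau_0}\cap X_{\tau_1}$ (equivalently, that $h_{\tau_i}^*$ carries arational foliations on $(S_G,{\bf z}_G)$ into $\AF(S_{\tau_i},{\bf z}_{\tau_i})$), which is true but which you assert without comment.
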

\begin{proof} We continue to denote the equivalence class of $\mu \in \AF(S)$ as $[\mu]$.
As in the proof of Theorem \ref{nocut1}, we will show that for every point $[\mu] \in \AF(S)/\hspace{-.1cm} \sim$,
there is a dense connected subset $W_0$ of $(\AF(S)/\hspace{-.1cm} \sim) -  \{[\mu]\}$.

We first extract the following key ingredients from the proof of the previous subsection.  The space $\AF(S)$ contains
a countable collection of connected subsets $\{\tilde W_j\}_{j =1}^\infty$ with the property that for any $i,j$ there
is a chain $\tilde W_j = \tilde W_{j_1},...,\tilde W_{j_n} = \tilde W_i$ so that $\tilde W_{j_k} \cap \tilde
W_{j_{k+1}}$ contains at least two points (the stable and unstable fixed points of some pseudo-Anosov mapping class)
and so that
\[ \tilde W = \bigcup_{j=1}^\infty \tilde W_j\]
is dense in $\AF(S)$.

Each of the sets $\tilde W_j$ is naturally homeomorphic to one of the connected spaces $\AF(S',{\bf z}')$.  In
particular, we see that the image of $\tilde W_j$ in $\AF(S)/\hspace{-.1cm} \sim$ is a connected set $W_j$ with no cut
points. Moreover, the collection $\{W_j\}_{j=1}^\infty$ has the same property for $\AF(S)/\hspace{-.1cm} \sim$ as
$\{\tilde W_j\}_{j=1}^\infty$ has for $\AF(S)$ as described in the previous paragraph.  We set
\[ W = \bigcup_{j=1}^\infty W_j. \]

To find the required set $W_0$, we first note that if $[\mu] \not \in W$, then we can take $W_0 = W$.  We therefore
assume that $[\mu] \in W$.

Since $W_j$ is connected and has no cut points for each $j \geq 1$, we see that $W_j - \{ [\mu]\}$ is connected for
every $j \geq 1$.  Moreover, if $W_i \cap W_j$ contains at least two points, then
\[(W_i - \{ [\mu]\})  \cap ( W_i - \{ [\mu] \}) \]
contains at least $1$ point.  It follows that
\[W_0 = \bigcup_{j=1}^\infty (W_j - \{ [\mu]\}) = \left( \bigcup_{j=1}^\infty W_j \right) - \{ [\mu]\} = W - \{[\mu]\}\]
is connected.  Since $W$ is dense in $\AF(S)/\hspace{-.1cm} \sim$, it follows that $W_0$ is dense in
$(\AF(S)/\hspace{-.1cm} \sim) - \{[\mu]\}$, as required.
\end{proof}

\section{Hyperbolic $3$--manifolds} \label{hyperbolicgeometry}

For the purposes of this section, we take $\Sigma = \Sigma_{g,n}$ to be a compact orientable surface of genus $g$ with
$n$ boundary components.  Here we will work exclusively with laminations instead of foliations as these are more natural in this context.   We will write $\lambda$ to denote a measured lamination (or its projective class) and $|\lambda|$ for the supporting (geodesic) lamination.

Given a hyperbolic $3$--manifold $M = \mathbb H^3/\Gamma$ and $\epsilon > 0$, we let $M_\epsilon$ be the
$\epsilon$--thick part of $M$.  We fix $\epsilon > 0$ less than the $3$--dimensional Margulis constant so that the
$\epsilon$--thin part, $M- M_\epsilon$, is a disjoint union of Margulis tubes and parabolic cusps.  The Margulis tubes
are homeomorphic to open solid tori and each of the parabolic cusps is homeomorphic to either $A \times (0,\infty)$ or
$T \times (0,\infty)$, where $A$ and $T$ denote an open annulus and torus, respectively (for example, see
\cite{thurstonbook} or \cite{benepetro}). We further define $M^0 = M_\epsilon^0 = M_\epsilon \cup \, \{\mbox{Margulis
tubes} \}$.  The boundary of $M^0$ is thus a union of annuli and tori and $M^0$ is homotopy equivalent to $M$.

For $\Gamma = \pi_1(M)$ finitely generated, it follows from \cite{scottcore} that there exists a compact core $N
\subset M^0$ for which the inclusion is a homotopy equivalence.  Indeed, from
\cite{mcculloughrelcore},\cite{kulshalrelcore} one may choose such an $N$ so that
\[P = N \cap \partial M^0\]
is a union of incompressible annuli and tori called the parabolic locus of $N$.  Moreover, $N$ can be chosen so that
each component $U \subset \overline{M^0-N}$ is a neighborhood of the unique end of $M^0$ it contains and so that the
inclusion $U \cap N \hookrightarrow U$ induces a homotopy equivalence.   Observe that $U \cap N$ is a component of $\partial N-P$, and we say that this component faces $U$.
We call such an $N$ a relative compact core.

The space of equivalence classes of orientable hyperbolic $3$--manifolds marked by a relative homotopy equivalence to
$(\Sigma,\partial \Sigma)$ is denoted
\[\AH(\Sigma,\partial \Sigma) = \{ f:(\Sigma,\partial \Sigma) \longrightarrow^{\hspace{-.4cm}\simeq} \hspace{.25cm} (M_0,\partial M_0) \, | \, M = \mathbb H^3/\Gamma \}/\hspace{-.1cm} \sim \, \, .\]
Here $(f:(\Sigma,\partial \Sigma) \to (M^0,\partial M^0)) \sim (h:(\Sigma,\partial \Sigma) \to (L^0,\partial L^0))$ if
there exists an isometry $\varphi:M \to L$ and a relative homotopy $\varphi \circ f \simeq h$. Using the holonomy
homomorphism we can identify a point of $\AH(\Sigma,\partial \Sigma)$ with a conjugacy class of homomorphisms to
$\PSL_2(\mathbb C) \cong \Isom^+(\mathbb H^3)$.  We thus view $\AH(\Sigma,\partial \Sigma)$ as a subspace of the space
of conjugacy classes of homomorphisms to $\PSL_2(\mathbb C)$ which defines a topology on $\AH(\Sigma,\partial \Sigma)$
(the algebraic topology). We abuse notation and simply denote points of $\AH(\Sigma,\partial \Sigma)$ as $M$, with the
marking understood unless clarification is necessary.

For $M \in \AH(\Sigma,\partial \Sigma)$, work of Bonahon \cite{bonahonends} and Thurston \cite{thurston} implies that
$M^0 \cong \Sigma \times \mathbb R$, and $N$ can be chosen to correspond to $\Sigma \times [-1,1]$ under this
homeomorphism.  We choose $\epsilon$ sufficiently small so that given any simple closed curve $\gamma \subset \Sigma$,
the geodesic representative of $\gamma$ lies in $M^0$ (which is possible since the geodesic representative lies on a
pleated surface and so cannot penetrate too far into any cusp).

Given $M \in \AH(\Sigma,\partial \Sigma)$ and a simple closed curve $\gamma \in \C^0(\Sigma)$ one can measure the
length of $\gamma$ in $M$, which is the length of the geodesic representative, unless none exists (in which case the
length is zero). In \cite{brocklength}, Brock proved that this naturally extends to a continuous function.
\begin{theorem}[Brock] \label{continuouslength}
There is a continuous function
\[ \Len : \AH(\Sigma,\partial \Sigma) \times \ML(\Sigma) \to \mathbb R \]
homogeneous in the second argument which extends the above mentioned length function on $\AH(\Sigma,\partial \Sigma) \times \C^0(\Sigma)$.
\end{theorem}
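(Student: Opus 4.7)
The plan is to extend $\Len$ by continuity from the dense subset of $\ML(\Sigma)$ consisting of positively weighted multicurves. For a weighted simple closed curve $w\gamma$, homogeneity forces
\[\Len(M, w\gamma) = w \cdot \ell_M(\gamma),\]
where $\ell_M(\gamma)$ denotes the translation length of the holonomy image of $\gamma$, taken to be $0$ if that element is parabolic. On this dense subset continuity is immediate from the algebraic topology on $\AH(\Sigma, \partial \Sigma)$: the translation length of a fixed $g \in \pi_1(\Sigma)$ varies continuously with the representation because the trace of the image of $g$ does, and for a weighted multicurve the function is a finite linear combination of such lengths. Homogeneity in the second argument is then built into the definition.

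Next I would establish continuity of $\Len$ in the lamination coordinate with $M$ fixed. The key is that for each $\lambda \in \ML(\Sigma)$ there is a pleated surface $f \colon (\Sigma, X) \to M$ realizing $\lambda$, i.e.\ one mapping the support $|\lambda|$ to a geodesic lamination in $M$; this is Thurston's pleated surface construction in the relatively hyperbolic setting. For such an $f$, the three-manifold length $\Len(M, \lambda)$ agrees with the intrinsic length of $\lambda$ on the hyperbolic surface $(\Sigma, X)$, and continuity in $\lambda$ then reduces to Thurston's classical continuity of length on a fixed hyperbolic surface, which is straightforward in train track coordinates.

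The main obstacle is joint continuity. Given $(M_n, \lambda_n) \to (M, \lambda)$, I would choose pleated surfaces $f_n \colon (\Sigma, X_n) \to M_n$ realizing $\lambda_n$, so that $\Len(M_n, \lambda_n)$ equals the intrinsic $X_n$-length of $\lambda_n$. By the Thurston--Bonahon compactness theorem for pleated surfaces, after passing to a subsequence and deleting a maximal collection of simple closed curves $\gamma_n$ whose $X_n$-lengths tend to $0$, the remaining structures converge to a pleated surface in $M$ realizing the corresponding sublamination of $\lambda$; joint continuity of length on Teichm\"uller space crossed with $\ML$ then controls this portion of the convergence. The pinched curves $\gamma_n$ are handled separately: their $M_n$-lengths are dominated by their $X_n$-lengths and hence tend to $0$, so algebraic convergence $M_n \to M$ forces the corresponding holonomies in $M$ to be parabolic, and by the definition of $\Len$ in the parabolic case the contribution of $\gamma_n$ to $\Len(M_n, \lambda_n)$ vanishes in the limit and matches the contribution of the parabolic leaf to $\Len(M, \lambda)$, which is $0$. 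Summing the two contributions yields the required convergence.
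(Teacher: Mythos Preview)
The paper does not give its own proof of this theorem: it is stated as a result of Brock and cited to \cite{brocklength}. So there is no in-paper argument to compare against; your proposal must stand on its own.

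There is a genuine gap. Your second and third steps both rest on the claim that for each $\lambda \in \ML(\Sigma)$ there is a pleated surface $f:(\Sigma,X) \to M$ realizing $\lambda$. This is false in exactly the cases that make the theorem nontrivial. If $|\lambda|$ is (or contains) the ending lamination of a simply degenerate end of $M$, or a simple closed curve whose holonomy is parabolic, then $\lambda$ is \emph{not} realizable by any pleated surface in $M$; indeed, non-realizability is essentially the definition of these objects, and Corollary~\ref{continuouslengthcor} says $\Len_M(\lambda)=0$ precisely there. So your reduction ``$\Len(M,\lambda)$ equals the intrinsic $X$-length on a realizing pleated surface'' breaks down exactly at the points where continuity is delicate.

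The same issue undermines the joint-continuity step. If $\lambda_n \to \lambda$ with $|\lambda|$ an ending lamination of $M$, then pleated surfaces $f_n$ realizing $\lambda_n$ exit the end of $M^0$; they do not meet any fixed compact set, so the Thurston--Bonahon compactness theorem for pleated surfaces does not apply, and the intrinsic structures $X_n$ need not converge in $\T(\Sigma)$. Your ``pinched curves'' bookkeeping handles accidental parabolics but not this phenomenon. Brock's actual argument in \cite{brocklength} confronts this head-on: he works with train-track neighborhoods and shows directly, via careful intersection-number estimates and the geometry of the degenerate end, that the normalized lengths of curves approximating an ending lamination tend to zero. That is the missing idea.
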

In what follows, we assume $M \in \AH(\Sigma,\partial \Sigma)$ has no accidental parabolics, meaning that the only
parabolics in $\pi_1(M)$ are represented by peripheral loops in $\Sigma$.  Given $M \in \AH(\Sigma,\partial \Sigma)$,
we let $C(M)$ denote the convex core of $M$. Fix a component $U \subset \overline{M^0 - N}$ providing a neighborhood of
an end of $M^0$. This end is called geometrically finite if $U \cap C(M)$ is compact, and simply degenerate otherwise.

The work of Thurston and Bonahon greatly clarifies the simply degenerate ends as we now describe. A sequence of simple
closed curves $\{\gamma_i\}_{i=1}^\infty$ is said to exit the end if the geodesic representative $\gamma_i^*$ of
$\gamma_i$ in $M$ is contained in $U$ for every $i$ and for every compact $K \subset M$ there are at most finitely many
$i$ for which $\gamma_i^* \cap K \neq \emptyset$ (that is, the geodesics $\gamma_i^*$ lie further and further out in
$U$).

The work of Thurston and Bonahon associates a unique lamination $|\lambda| \in \EL(\Sigma)$ with the following
property. First, there exists a sequence of simple closed curves $\{\gamma_i\}_{i=1}^\infty$ so that in $\PML(\Sigma)$
we have $\displaystyle{\lim_{i \to \infty} \gamma_i = \lambda'}$ for some $\lambda' \in \PML(\Sigma)$ with $|\lambda'| = |\lambda|$.
Second, for every sequence $\{ \gamma_i\}_{i=1}^\infty$ exiting the end, up to subsequence, $\displaystyle{\lim_{i \to
\infty}\gamma_i = \lambda'}$ with $|\lambda'| = |\lambda|$.  This lamination $|\lambda|$ is called the ending lamination
associated to the simply degenerate end.

Using the orientations on $M$ and on $\Sigma$ the ends of $M^0$ can be labeled as either positive or negative.  We
define $\E^+(M)$ and $\E^-(M)$ to be the ending laminations associated to the positive and negative ends of $M^0$,
respectively, if the end is simply degenerate.  If either or both of the ends are geometrically finite, then we let
$\E^\pm(M) \in \T(\Sigma)$ be the (finite type) conformal structure at infinity associated to the geometrically finite
end.

We say that $M$ is doubly degenerate if both ends are simply degenerate.  We denote the space of doubly degenerate
manifolds $\DD(\Sigma,\partial \Sigma) \subset \AH(\Sigma,\partial \Sigma)$.  Fixing a conformal structure $Y \in
\T(\Sigma)$, we denote the associated Bers slice
\[ B_Y = \{ M \in \AH(\Sigma,\partial \Sigma) \, | \, M \mbox{ is geometrically finite, and } \E^-(M) = Y \}. \]
The singly degenerate points on the boundary of the Bers slice are denoted
\[ \partial_0 B_Y = \{ M \in \overline{B_Y} \, | \, \E^-(M) = Y \, , \, \, \E^+(M) \in \EL(\Sigma) \}. \]

We thus have maps
\[\E:\DD(\Sigma,\partial \Sigma) \to \EL(\Sigma) \times \EL(\Sigma) - \Delta \]
and
\[\E^+:\partial_0 B_Y \to \EL(\Sigma).\]

A result of Thurston \cite{thurston,thurstonfiber} states that the length function can be used to characterize the
ending laminations (as a set).  This is described by the following corollary of Theorem \ref{continuouslength}.
\begin{corollary}[Thurston] \label{continuouslengthcor}
For $M \in \DD(\Sigma,\partial \Sigma)$, $\Len_M(\lambda) = 0$ if and only if
\[ |\lambda| \in \{\E^+(M),\E^-(M) \}. \]
Similarly, for $M \in \partial_0 B_Y$, $\Len_M(\lambda) = 0$ if and only if $|\lambda| = \E^+(M)$.
\end{corollary}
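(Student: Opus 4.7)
The plan is to derive both directions of the corollary from Brock's continuity theorem (Theorem~\ref{continuouslength}) combined with the defining approximation property of an ending lamination by exiting sequences of simple closed curves, and with standard tameness input (the Margulis lemma together with Bonahon's tameness of the ends of $M^0$). I treat the $\DD$--statement and indicate that the Bers slice statement is identical, the only difference being that only the upper end can be simply degenerate.

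For the direction ($\Leftarrow$), assume $M \in \DD(\Sigma,\partial \Sigma)$ and $|\lambda| = \E^+(M)$ (the case $\E^-(M)$ is symmetric). By the characterization of the ending lamination recalled above, choose a sequence $\gamma_i \in \C^0(\Sigma)$ whose geodesic representatives $\gamma_i^*$ exit the positive end and satisfy $[\gamma_i] \to [\lambda]$ in $\PML(\Sigma)$. Rescale to $c_i \gamma_i \to \lambda$ in $\ML(\Sigma)$; since $\I(\gamma_i,\alpha) \to \infty$ for any $\alpha$ transverse to $|\lambda|$ while $c_i \I(\gamma_i,\alpha) \to \I(\lambda,\alpha) < \infty$, we must have $c_i \to 0$. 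The usual pleated surface argument (Thurston's bounded diameter lemma; see also Bonahon and Canary) yields pleated maps $f_i:\Sigma \to M$ homotopic to the marking and realizing $\gamma_i$, each of area $2\pi |\chi(\Sigma)|$ and exiting the end, giving a uniform bound $\ell_M(\gamma_i^*) \le L$. Theorem~\ref{continuouslength} then yields
\[
\Len_M(\lambda) = \lim_{i \to \infty} \Len_M(c_i \gamma_i) = \lim_{i \to \infty} c_i \, \ell_M(\gamma_i^*) \le L \lim_{i \to \infty} c_i = 0.
\]

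For the direction ($\Rightarrow$), assume $\Len_M(\lambda) = 0$ and select simple closed curves with $c_i \gamma_i \to \lambda$ in $\ML(\Sigma)$ and $c_i \to 0$. By Theorem~\ref{continuouslength}, $c_i \ell_M(\gamma_i^*) \to 0$. The assumption of no accidental parabolics implies that no $\gamma_i$ is parabolic, so $\ell_M(\gamma_i^*) > 0$, and since $\lambda$ is not a weighted simple closed curve (as such would have positive length) the sequence $\gamma_i$ contains infinitely many distinct isotopy classes. The Margulis lemma, together with the tameness of $M^0 \cong \Sigma \times \mathbb R$, implies that for any $L_0 > 0$ there are only finitely many distinct closed geodesics in $M$ of length at most $L_0$ whose $\epsilon$--thick part meets a given compact set of $M^0$. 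Passing to a subsequence, therefore, the $\gamma_i^*$ must exit one of the two ends of $M^0$, say the positive one. By the characterization of $\E^+(M)$, any projective limit of $\{\gamma_i\}$ in $\PML(\Sigma)$ is then supported on $\E^+(M)$, and since $[\gamma_i] \to [\lambda]$ this forces $|\lambda| = \E^+(M)$.

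The statement for $M \in \partial_0 B_Y$ is proved identically: the lower end is geometrically finite with conformal structure $Y$, so the only simply degenerate end is the positive one, and the dichotomy in ($\Rightarrow$) collapses to the single case $|\lambda| = \E^+(M)$. The main obstacle in the argument above is the ($\Rightarrow$) direction: one must rule out the possibility that the approximating geodesics $\gamma_i^*$ remain in a compact subset of $M$ while $c_i \ell_M(\gamma_i^*) \to 0$. This is precisely where the Margulis lemma and the tameness of the ends (and the absence of accidental parabolics) enter, and it is the single nontrivial input beyond Theorem~\ref{continuouslength} itself.
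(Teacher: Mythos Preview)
The paper does not give a proof of this corollary; it is attributed to Thurston with references to \cite{thurston,thurstonfiber} and simply described as a consequence of Brock's continuity theorem, so there is no in-paper argument to compare against.  Your outline captures the right intuition, but there are genuine gaps in both directions.

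For ($\Leftarrow$), the assertion that pleated surfaces of area $2\pi|\chi(\Sigma)|$ realizing $\gamma_i$ yield a uniform bound $\ell_M(\gamma_i^*)\le L$ is false as stated: a bounded-area hyperbolic surface can carry simple closed geodesics of arbitrary length.  What Bonahon--Thurston actually furnish is the \emph{existence} of some bounded-length exiting sequence (short curves on interpolating simplicial hyperbolic surfaces), not a bound for an arbitrary exiting sequence.  Moreover, the characterization recalled in the paper only guarantees $[\gamma_i]\to[\lambda']$ for \emph{some} $\lambda'$ with $|\lambda'|=|\lambda|$; when $|\lambda|$ is not uniquely ergodic you cannot simply arrange the exiting sequence to converge to your given $[\lambda]$, and linearity of $\Len_M$ on the simplex of measures on $|\lambda|$ does not by itself propagate the vanishing from $\lambda'$ to $\lambda$.

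For ($\Rightarrow$) the problem is sharper.  From $c_i\gamma_i\to\lambda$ and Theorem~\ref{continuouslength} you obtain $c_i\,\ell_M(\gamma_i^*)\to 0$, but this gives no bound on $\ell_M(\gamma_i^*)$ itself: nothing prevents $\ell_M(\gamma_i^*)\to\infty$ with $c_i\to 0$ faster.  Your finiteness statement about closed geodesics of length at most $L_0$ meeting a compact set is correct but inapplicable without such a bound $L_0$, so the conclusion that the $\gamma_i^*$ exit an end does not follow.

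The argument Thurston actually uses goes through \emph{realizability}: a measured lamination $\lambda$ satisfies $\Len_M(\lambda)>0$ if and only if $|\lambda|$ is realized by a pleated surface in $M$, and (by Bonahon) a lamination fails to be realized precisely when its support contains the ending lamination of a simply degenerate end.  Since for $M\in\DD(\Sigma,\partial\Sigma)$ the ending laminations are filling, ``contains'' becomes ``equals'', and both directions --- as well as independence of the transverse measure on $|\lambda|$ --- follow at once.
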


Surjectivity of $\E$ onto $\EL(\Sigma) \times \EL(\Sigma) - \Delta$ (and $\E^+$ onto $\EL(\Sigma)$) follows from the
Bers Simultaneous Uniformization Theorem \cite{berssimunif}, Thurston's Double Limit Theorem \cite{thurstonfiber} (see
also Otal \cite{otaldlt}), and Corollary \ref{continuouslengthcor} (to guarantee the correct ending laminations; see also the proof of Theorem \ref{ELThomeo1}).  A
slightly stronger version of the Double Limit Theorem which we will need is given by the following.

\begin{theorem}[Thurston] \label{DLT} Suppose $\{M_i\} \in \AH(\Sigma, \partial \Sigma)$ has the property that for some $K > 0$
\[ \Len_{M_i}(\lambda_i) + \Len_{M_i}(\mu_i) \leq K \]
for some $\{\lambda_i\},\{\mu_i\} \subset \ML(\Sigma)$.  If $\lambda_i \to \lambda$, $\mu_i \to \mu$ and $\lambda,\mu$
fills $S$, then up to subsequence, $M_i$ converges to some manifold $M \in \AH(\Sigma,\partial \Sigma)$.
\end{theorem}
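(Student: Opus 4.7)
The plan is to prove this by contradiction, invoking the Morgan--Shalen compactification of the character variety together with Skora's duality theorem between small actions of surface groups on $\mathbb{R}$--trees and measured laminations.

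First, I would fix lifts of the holonomy representations $\rho_i : \pi_1(\Sigma) \to \PSL_2(\mathbb{C})$ to $\SL_2(\mathbb{C})$ and regard $\{M_i\}$ as a sequence in the character variety. If no subsequence converges in $\AH(\Sigma,\partial \Sigma)$, then after passing to a subsequence the translation length functions $\ell_{M_i} : \pi_1(\Sigma) \to \mathbb{R}_{\geq 0}$ either stay bounded on all conjugacy classes (yielding a convergent subsequence by a standard compactness argument in the character variety, contradicting the hypothesis) or else a rescaling $\ell_{M_i}/K_i$ with $K_i \to \infty$ converges projectively. In the latter case, the Morgan--Shalen/Bestvina--Paulin machinery produces a nontrivial minimal, small action of $\pi_1(\Sigma)$ on an $\mathbb{R}$--tree $T$, with parabolic subgroups corresponding to $\partial \Sigma$ fixing points, i.e., a small action relative to the peripheral structure.

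Next I would invoke Skora's theorem: every minimal small action of the fundamental group of a compact surface (with peripheral subgroups fixing points) on an $\mathbb{R}$--tree is dual to a measured lamination $\nu \in \ML(\Sigma)$, in the sense that the translation length function on $T$ equals the intersection function $\I(\cdot, \nu)$. The length bound
\[ \Len_{M_i}(\lambda_i) + \Len_{M_i}(\mu_i) \leq K \]
becomes, after dividing by $K_i$ and passing to the limit using continuity of length from Theorem \ref{continuouslength} and homogeneity in the lamination variable, the equality
\[ \I(\nu,\lambda) + \I(\nu,\mu) = 0, \]
so $\I(\nu, \lambda) = \I(\nu,\mu) = 0$.

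Finally, the filling hypothesis on $(\lambda,\mu)$ is in direct contradiction with the existence of such a $\nu$: no nonzero measured lamination can have zero intersection with both elements of a filling pair (any leaf of $|\nu|$ would either be a leaf of or miss $|\lambda|$ and $|\mu|$, which is incompatible with the pair filling $\Sigma$). This contradiction shows the rescaling case cannot occur, so $\{\ell_{M_i}\}$ is bounded on every conjugacy class, whence $\{M_i\}$ has a subsequence converging to some $M \in \AH(\Sigma,\partial \Sigma)$. The main obstacle is handling the relative setting: making sure the Morgan--Shalen limit action respects the parabolic/peripheral structure coming from $\partial \Sigma$ so that Skora's theorem applies and produces a genuine measured lamination on $\Sigma$, rather than an action with extraneous invariant subtrees. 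This is standard once one works with representations into $\PSL_2(\mathbb{C})$ that are already parabolic on $\partial \Sigma$, but it is the step that must be verified carefully.
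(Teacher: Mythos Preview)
The paper does not actually prove this theorem: immediately after the statement it simply records that it is Theorem~6.3 of \cite{thurstonfiber}, and alternatively that it follows from Otal's version of the Double Limit Theorem together with the Density Theorem and a diagonal argument. Your proposal, by contrast, supplies a direct argument via the Morgan--Shalen compactification and Skora's duality theorem; this is essentially Otal's proof of the Double Limit Theorem, so you are filling in the content behind one of the references the paper invokes rather than taking a genuinely different route.

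There is one point where your write-up needs tightening. To pass from the bound $\Len_{M_i}(\lambda_i)+\Len_{M_i}(\mu_i)\le K$ to $\I(\nu,\lambda)=\I(\nu,\mu)=0$ you invoke Theorem~\ref{continuouslength}, but that theorem concerns continuity of $\Len$ on $\AH(\Sigma,\partial\Sigma)\times\ML(\Sigma)$, and in your contradiction branch the sequence $M_i$ leaves every compact subset of $\AH(\Sigma,\partial\Sigma)$, so it does not apply. What you actually need is that the rescaled length functions $\Len_{M_i}(\cdot)/K_i$ converge to $\I(\cdot,\nu)$ on all of $\ML(\Sigma)$ (uniformly on compact sets), not just pointwise on weighted simple closed curves, so that $\Len_{M_i}(\lambda_i)/K_i\to\I(\lambda,\nu)$ despite the simultaneous variation of $\lambda_i$. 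This is true and is handled in Otal's exposition (for instance via train-track realizations of laminations in the $M_i$, or via the extension of length to geodesic currents), but it requires its own argument and is not a consequence of Brock's continuity theorem. Once this step is justified, the rest of your outline---Skora's theorem producing $\nu$, and the filling hypothesis forcing $\nu=0$---is correct, including your remark that the peripheral parabolics must be checked to fix points in the limiting tree so that Skora applies in the relative setting.
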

This is precisely Theorem 6.3 of \cite{thurstonfiber}, but also follows from a diagonal argument, the Density Theorem
\cite{ELCII} and the Double Limit Theorem as stated in Theorem 5.0.1 of \cite{otaldlt}.

Our discussion culminates in the following simplified version of the Ending Lamination Theorem \cite{ELCI},\cite{ELCII}
for $\DD(\Sigma,\partial \Sigma)$ and $\partial_0 B_Y$.
\begin{theorem}[Brock--Canary--Minsky] \label{ELT} $\E$ is a bijection on $\DD(\Sigma,\partial \Sigma)$ and $\E^+$ is
a bijection on $\partial_0 B_Y$.
\end{theorem}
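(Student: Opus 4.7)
The plan is to prove injectivity via the Brock--Canary--Minsky Ending Lamination Theorem \cite{ELCI, ELCII}, and surjectivity via an approximation scheme. For injectivity, observe that on $\DD(\Sigma,\partial \Sigma)$ both ends of $M$ are simply degenerate, so the full Brock--Canary--Minsky end invariant is recorded by $\E(M) \in \EL(\Sigma) \times \EL(\Sigma) - \Delta$; likewise on $\partial_0 B_Y$ the negative end invariant is fixed at $Y$, so $\E^+(M)$ records the full remaining invariant. In both cases the general Ending Lamination Theorem yields injectivity.

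For surjectivity on $\DD$, given distinct $|\lambda^-|,|\lambda^+| \in \EL(\Sigma)$, I would fix measured representatives $\mu^\pm \in \ML(\Sigma)$ and choose sequences of weighted simple closed curves $\gamma_i^\pm \to \mu^\pm$ in $\ML(\Sigma)$. For each $i$, pick $X_i^\pm \in \T(\Sigma)$ with extremal (or hyperbolic) length of $\gamma_i^\pm$ on $X_i^\pm$ tending to $0$ (for instance by twisting enough times around $\gamma_i^\pm$), and use the Bers Simultaneous Uniformization Theorem to form the quasi-Fuchsian manifold $M_i = Q(X_i^-,X_i^+) \in \QF(\Sigma,\partial\Sigma) \subset \AH(\Sigma,\partial \Sigma)$. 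Bers' inequality then forces $\Len_{M_i}(\gamma_i^\pm) \to 0$. Since either one of the filling laminations $\mu^\pm$ already implies that the pair $(\mu^+,\mu^-)$ fills $\Sigma$, Theorem \ref{DLT} delivers a subsequential algebraic limit $M \in \AH(\Sigma,\partial \Sigma)$, and Theorem \ref{continuouslength} yields $\Len_M(\mu^\pm)=0$. For $\partial_0 B_Y$ the argument is the same except that one fixes the negative conformal structure at $Y$, forms $M_i = Q(Y, X_i^+) \in B_Y$, and replaces Theorem \ref{DLT} with Bers' bounded image theorem, which guarantees that $\overline{B_Y}$ is compact.

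The main obstacle is to verify that the limit $M$ actually lies in $\DD(\Sigma,\partial \Sigma)$ (respectively $\partial_0 B_Y$) and realizes the prescribed ending laminations, since Corollary \ref{continuouslengthcor} presupposes membership in these spaces. First, $M$ has no accidental parabolics: any non-peripheral simple closed curve $\alpha$ becoming parabolic in $M$ would satisfy $\Len_M(\alpha)=0$, but by continuity of length together with a standard geometric-limit argument this would force $\I(\alpha,\mu^\pm)=0$, contradicting the fact that each $|\mu^\pm|$ fills. Second, a geometrically finite end of an element of $\AH(\Sigma,\partial \Sigma)$ without accidental parabolics cannot carry a zero-length filling lamination, a standard consequence of the Thurston--Bonahon structure theory for geometrically finite ends; hence $\Len_M(\mu^\pm)=0$ forces both ends of $M$ to be simply degenerate. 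Once $M \in \DD$ is established, Corollary \ref{continuouslengthcor} identifies $\E(M) = (|\lambda^-|,|\lambda^+|)$. The $\partial_0 B_Y$ case follows identically, noting that algebraic convergence within $\overline{B_Y}$ preserves the negative end invariant at $Y$.
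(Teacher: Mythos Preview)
The paper does not actually prove this theorem: it is stated as a result of Brock--Canary--Minsky, with injectivity taken directly from \cite{ELCI,ELCII} and surjectivity summarized in the single sentence preceding the statement (Bers Simultaneous Uniformization, Thurston's Double Limit Theorem, and Corollary~\ref{continuouslengthcor}, with a forward reference to the proof of Theorem~\ref{ELThomeo1}). Your proposal is precisely an expansion of that sketch, so you are following the paper's intended route and the argument is essentially correct.

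Two small points. First, Corollary~\ref{continuouslengthcor} only pins down the \emph{unordered} pair $\{\E^+(M),\E^-(M)\}$, so your final sentence ``Corollary~\ref{continuouslengthcor} identifies $\E(M)=(|\lambda^-|,|\lambda^+|)$'' skips a step. For bare surjectivity this is harmless---if the limit realizes the swapped pair, interchange $X_i^-$ and $X_i^+$ in the construction---but the paper is aware of the issue and handles it by pointing to the Brock--Bromberg style argument in the proof of Theorem~\ref{ELThomeo1}. Second, your ``standard geometric-limit argument'' ruling out accidental parabolics is the right idea but is stated loosely; the clean version is that a filling lamination with $\Len_M=0$ forces the corresponding end to be simply degenerate with that ending lamination (Thurston--Bonahon), after which any accidental parabolic would have to be disjoint from a filling lamination, which is impossible. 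For $\partial_0 B_Y$, the assertion that the negative end invariant $Y$ persists in the limit also needs a word (e.g.\ continuity of the conformal structure at infinity on the closure of the Bers slice), though again this is standard.
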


We can now assemble the pieces from the discussion above to prove that the ending laminations actually serve as a
continuous parameterization of $\DD(\Sigma,\partial \Sigma)$.

\begin{theorem} \label{ELThomeo1}  The map $\E:\DD(\Sigma,\partial \Sigma) \to \EL(\Sigma) \times \EL(\Sigma)- \Delta$ is a homeomorphism.
\end{theorem}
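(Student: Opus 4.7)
Since Theorem \ref{ELT} already provides that $\E\colon \DD(\Sigma,\partial\Sigma)\to \EL(\Sigma)\times \EL(\Sigma)-\Delta$ is a bijection, the plan is to verify separately that $\E$ and $\E^{-1}$ are continuous. Both arguments will be driven by the same engine: bound lengths of suitable laminations via Corollary \ref{continuouslengthcor}, extract a limit using either the Double Limit Theorem \ref{DLT} or the continuity of length (Theorem \ref{continuouslength}), and identify the limit using Corollary \ref{continuouslengthcor} again together with the bijectivity part of Theorem \ref{ELT}. Throughout, I would use the subsequence characterization of continuity (both target spaces are metrizable), so that I only need to show that every subsequence has a further subsequence with the correct limit.

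For continuity of $\E^{-1}$: suppose $(\lambda_n^+,\lambda_n^-)\to(\lambda^+,\lambda^-)$ in $\EL(\Sigma)\times\EL(\Sigma)-\Delta$, and put $M_n=\E^{-1}(\lambda_n^+,\lambda_n^-)$, $M=\E^{-1}(\lambda^+,\lambda^-)$. After passing to a subsequence, choose any measured lifts $\tilde\lambda_n^\pm\in\PAF(\Sigma)$ of $\lambda_n^\pm$; using compactness of $\PML(\Sigma)$ together with the fact that $\EL(\Sigma)\cong\partial\C(\Sigma)$ is Hausdorff, I can arrange $\tilde\lambda_n^\pm\to\tilde\lambda_*^\pm$ in $\PML$ with $|\tilde\lambda_*^\pm|=\lambda^\pm$. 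By Corollary \ref{continuouslengthcor}, $\Len_{M_n}(\tilde\lambda_n^\pm)=0$, so the hypotheses of Theorem \ref{DLT} are met once one checks the filling condition, which holds because the limits are arational and project to distinct points of $\EL(\Sigma)$. Extracting a further subsequence, $M_n\to M_*$ in $\AH(\Sigma,\partial\Sigma)$. Continuity of length gives $\Len_{M_*}(\tilde\lambda_*^\pm)=0$, and since $\tilde\lambda_*^\pm$ are filling and arational this forces both ends of $M_*$ to be simply degenerate without accidental parabolics, so $M_*\in\DD(\Sigma,\partial\Sigma)$ with $\{\E^+(M_*),\E^-(M_*)\}=\{\lambda^+,\lambda^-\}$. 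Matching $+$ with $+$ (see below) gives $\E(M_*)=(\lambda^+,\lambda^-)$, and then Theorem \ref{ELT} yields $M_*=M$.

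For continuity of $\E$: suppose $M_n\to M$ in $\DD(\Sigma,\partial\Sigma)$, with $\E(M_n)=(\lambda_n^+,\lambda_n^-)$ and $\E(M)=(\lambda^+,\lambda^-)$. Choose lifts $\tilde\lambda_n^\pm\in\PAF$ and pass to a subsequence with $\tilde\lambda_n^\pm\to\tilde\lambda_*^\pm$ in $\PML$. Corollary \ref{continuouslengthcor} gives $\Len_{M_n}(\tilde\lambda_n^\pm)=0$, and then Theorem \ref{continuouslength} implies $\Len_M(\tilde\lambda_*^\pm)=0$, so Corollary \ref{continuouslengthcor} applied in the other direction forces $|\tilde\lambda_*^\pm|\in\{\E^+(M),\E^-(M)\}=\{\lambda^+,\lambda^-\}$. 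Pushing forward to $\EL$ and matching $+$ with $+$ concludes $\lambda_n^\pm\to\lambda^\pm$.

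The step I expect to be the main obstacle is the $+/-$ matching that appears in both halves of the argument: \emph{a priori} the limiting procedure could swap the two ending laminations, identifying $|\tilde\lambda_*^+|$ with $\E^-(M_*)$ and $|\tilde\lambda_*^-|$ with $\E^+(M_*)$. To rule this out I would exploit the fact that the product structure $M^0\cong\Sigma\times\mathbb R$, together with the orientations on $M$ and $\Sigma$ and the relative homotopy class of the marking $f\colon(\Sigma,\partial\Sigma)\to(M^0,\partial M^0)$, canonically labels the two ends of $M^0$. This labeling is preserved under algebraic convergence in $\AH(\Sigma,\partial\Sigma)$ because it depends only on the marked homotopy type; equivalently, one can detect the positive end by the sign of intersection of geodesic representatives with a fixed transversal surface determined by the marking, a quantity that is locally constant along a sequence $M_n\to M$ and is compatible with passage to laminations via the approximation of $\E^{\pm}$ by exiting simple closed curves.
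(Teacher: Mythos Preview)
Your overall architecture matches the paper's: reduce to sequential continuity, use Corollary \ref{continuouslengthcor} and Theorem \ref{continuouslength} to pin down $\{\E^+,\E^-\}$ as an unordered pair in the continuity-of-$\E$ direction, and use Theorem \ref{DLT} plus Theorem \ref{ELT} in the $\E^{-1}$ direction. You also correctly isolate the one genuine obstacle, namely ruling out the swap $|\tilde\lambda_*^+|=\E^-$ and $|\tilde\lambda_*^-|=\E^+$.

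The gap is in your resolution of that obstacle. Saying the $\pm$ labeling ``depends only on the marked homotopy type'' and is therefore ``preserved under algebraic convergence'' is not enough: the labeling of the ends is indeed determined by the marking, but what you need is that the \emph{geodesic realizations} of curves approximating $\E^+(M_n)$ lie on the positive side uniformly in $n$, and algebraic convergence alone gives you no such geometric control. Your ``sign of intersection with a fixed transversal surface'' heuristic has the same problem: there is no single surface sitting inside all the $M_n$, and transporting one from $M$ to $M_n$ in a geometrically meaningful way is exactly what requires work. The paper invokes Canary's theorem to upgrade algebraic convergence to \emph{strong} convergence (possible here because there are no accidental parabolics), obtaining $K_i$--bi-Lipschitz maps between relative compact cores $N_i\subset M_i^0$ and $N\subset M^0$ with $K_i\to 1$. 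With this in hand the paper runs a Brock--Bromberg style argument: assuming a swap, one builds sequences $\{\gamma_i\}$ and $\{\delta_i\}$ exiting opposite ends of $M_i^0$ but both converging to $\E^-(M)$ in $\overline{\C(\Sigma)}$; interpolating along the curve-complex geodesic $[\gamma_i,\delta_i]$ by a chain of pleated surfaces, one finds a vertex $\alpha_i$ realized on a pleated surface meeting $N_i$, hence (via the bi-Lipschitz maps) meeting a fixed compact set in $M$. Compactness of pleated surfaces then produces a pleated surface in $M$ realizing $\E^-(M)$, a contradiction. This argument, or something of comparable strength, is what your final paragraph would need to supply.
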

\begin{proof} As just noted, Theorem \ref{ELT} implies that $\E$ is a bijection.  We are therefore left to prove that $\E$ and
$\E^{-1}$ are continuous.

We first show that $\E$ is continuous.  Suppose $\{M_i\} \subset \DD(\Sigma,\partial \Sigma)$ is any sequence which
converges to some $M \in \DD(\Sigma,\partial \Sigma)$.  It suffices to show that there is a subsequence, also called
$\{M_i\}$ so that $\E^\pm(M_i) \to \E^\pm(M)$ as $i \to \infty$.  Throughout, when passing to subsequences, we always
re-index so that the index set is the set of positive integers.

We start by picking a subsequence of measured laminations $\{\lambda_i^\pm\}$ with $|\lambda_i^\pm| = \E^\pm(M_i)$
which converge to some measured laminations $\lambda^\pm$.  We wish to show that $\lambda^\pm$ are measures supported
on $\E^\pm(M)$.

Theorem \ref{continuouslength} and Corollary \ref{continuouslengthcor} implies
\[\lim_{i \to \infty} \Len_{(M_i)}(\lambda^\pm_i) = \Len_M(\lambda^\pm) = 0. \]
Corollary \ref{continuouslengthcor} then implies that one of the following happens
\[\lim_{i \to \infty} \E^\pm(M_i) \to \E^\pm(M) \, \, \mbox{ or } \, \, \lim_{i \to \infty} \E^\pm(M_i) \to \E^\mp(M).\]
\begin{claim} $\displaystyle{\lim_{i \to \infty} \E^\pm(M_i) \to \E^\pm(M)}$.
\end{claim}
\begin{proof}
The argument we give here is a very minor modification of an argument
given by Brock--Bromberg in the proof of Theorem 8.1 of \cite{brobrodense}.

We suppose that
\[\lim_{i \to \infty} \E^\pm(M_i) \to \E^\mp(M) \]
and arrive at a contradiction.

Since there are no accidental parabolics, work of Canary \cite{canary} implies that the convergence $M_i \to M$ is strong, and so in particular there are
relative compact cores $N_i \subset M_i^0$ and $N \subset M^0$ and $K_i$--bi-Lipschitz maps
\[F_i: N_i \to N \]
compatible with the markings with $K_i \to 1$ as $i \to \infty$.  Let $\overline{M_i^0 - N_i} = U_i^- \sqcup U_i^+$
denote the corresponding neighborhoods of the negative and positive ends of $M_i^0$, respectively. Similarly, we write
$\overline{M^0 - N} = U^- \sqcup U^+$.  We remark that $F_i$ takes the component of $N_i - P_i$ facing $U_i^\pm$ to the
component of $N-P$ facing $U^\pm$.

Let $\{\gamma_i\} \subset \C(\Sigma)$ be a sequence exiting the end of $M^0$ defined by $U^-$ so that
\[\lim_{i \to \infty} \gamma_i = \E^-(M)\]
in the quotient of $\PML(\Sigma)$ obtained by forgetting measures.  It follows from Klarreich's work \cite{klarreich}
that this convergence also takes place in $\overline \C(\Sigma)$.

Strong convergence further implies that, after passing to a subsequence if necessary, we may assume that the geodesic
representative of $\gamma_i$ in $M_i$ lies in $U_i^-$.

Similarly, for each $i$, we can construct a sequence of curves $\{\delta_j(i)\}_{j=1}^\infty$ exiting the end of
$M_i^0$ defined by $U_i^+$ and hence converging to $\E^+(M_i)$.  Since $\E^+(M_i) \to \E^-(M)$, we can choose a
diagonal sequence $\{ \delta_i \}_{i=1}^\infty$ with $\delta_i = \delta_{j(i)}(i)$ so that
\[ \lim_{i \to \infty} \delta_i = \E^-(M)\]
in $\overline \C(\Sigma)$.

Next observe that if $\{ \alpha_i\}_{i=1}^\infty \subset \C(\Sigma)$ is any sequence with $\alpha_i$ a vertex of the
geodesic $[\gamma_i,\delta_i] \subset \C(\Sigma)$, then we also have
\[\lim_{i \to \infty} \alpha_i = \E^-(M).\]

Now, fix any $i$.  Two consecutive vertices of $[\gamma_i,\delta_i]$ represents a pair of disjoint essential simple
closed curves, and so can be realized in a single pleated surface in $M_i$ (see \cite{thurston} and \cite{ceg}).  The
geodesic $[\gamma_i,\delta_i]$ thus gives rise to a finite set of pleated surfaces $X_i(1),...,X_i(d_i)$ where $d_i =
d(\gamma_i,\delta_i)$ with the property that
\begin{enumerate}
\item $\gamma_i$ is realized on $X_i(1)$,
\item $\delta_i$ is realized on $X_i(d_i)$, and
\item $X_i(j) \cap X_i(j+1) \neq \emptyset$ for each $j = 1,..,d_i-1$.
\end{enumerate}
Therefore, there is a vertex $\alpha_i \in [\gamma_i,\delta_i]$ realized on one of these pleated surfaces in $M_i$,
call it $X_i$, which nontrivially intersects $N_i$.

Once again appealing to strong convergence we see that pleated surfaces $Y_i$ in $M$ realizing $\alpha_i$ must
intersect the (compact) $R$--neighborhood of $N$ for some $R > 0$.  Compactness of pleated surfaces implies that $Y_i$
converges to a pleated surface $Y$ in $M$ which realizes the limit $\E^-(M)$ of $\alpha_i$.  This is a contradiction,
and it follows that $\E^\pm(M_i) \to \E^\pm(M)$, verifying the claim. \end{proof}

All that remains is to show that $\E^{-1}$ is continuous, and this will complete the proof of the theorem.  We assume
that $\{M_i\} \subset \DD(\Sigma,\partial \Sigma)$ with
\[\lim_{i \to \infty} \E^\pm(M_i) = \E(M)\]
and prove that, after passing to a subsequence if necessary, we have
\[\lim_{i \to \infty} M_i = M.\]
This will show that $\E^{-1}$ is continuous.

Let $\{\lambda_i^\pm\}_{i=1}^\infty$ be any measured laminations with $|\lambda_i^\pm| = \E^\pm(M_i)$.  After scaling
the transverse measures and passing to a subsequence if necessary, we may assume that
\[ \lim_{i \to \infty} \lambda_i^\pm = \lambda^\pm \]
where $|\lambda^\pm| = \E^\pm(M)$.

Theorem \ref{DLT} implies that, after passing to further subsequence if necessary, $\{M_i\}$ converges to some $M' \in
\AH(\Sigma,\partial \Sigma)$.  Since $\Len_{M_i}(\lambda_i^\pm) = 0$, it follows that $\Len_{M'}(\lambda^\pm) = 0$.
Therefore, $\{E^+(M),\E^-(M)\} = \{\E^+(M'),\E^-(M')\}$, so either $\E^\pm(M) = \E^\pm(M')$ and we are done by Theorem
\ref{ELT}, or else $\E^\pm(M) = \E^\mp(M')$.  The argument given above implies the latter situation cannot occur, and
so the proof is complete.
\end{proof}

In a similar fashion, one obtains
\begin{theorem} \label{ELThomeo2}
The map $\E^+:\partial_0 B_Y \to \EL(\Sigma)$ is a homeomorphism.
\end{theorem}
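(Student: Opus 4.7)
The plan is to mirror the proof of Theorem \ref{ELThomeo1}, simplified by the fact that the negative end is always the fixed conformal structure $Y$, so the ``swap'' issue that required the Canary strong convergence / pleated surface argument does not arise.

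First, bijectivity is already supplied by Theorem \ref{ELT}. To show $\E^+$ is continuous, I would take a convergent sequence $M_i \to M$ in $\partial_0 B_Y$ and, after passing to a subsequence, choose measured laminations $\lambda_i \in \ML(\Sigma)$ with $|\lambda_i| = \E^+(M_i)$ and (after rescaling) $\lambda_i \to \lambda$ in $\ML(\Sigma)$. Then Theorem \ref{continuouslength} gives
\[ \Len_M(\lambda) \;=\; \lim_{i\to\infty} \Len_{M_i}(\lambda_i) \;=\; 0, \]
and Corollary \ref{continuouslengthcor} then forces $|\lambda| = \E^+(M)$; here there is no analog of the $\E^-(M)$ possibility, since the negative end of $M$ is geometrically finite with conformal structure $Y$ and not a lamination. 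Since every convergent subsequence of $\{\E^+(M_i)\}$ in the compact space $\EL(\Sigma)$ limits on $\E^+(M)$, the full sequence does, proving continuity.

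For continuity of $(\E^+)^{-1}$, I would exploit compactness of the Bers slice closure $\overline{B_Y}$ in $\AH(\Sigma,\partial\Sigma)$ (which plays the role of the Double Limit Theorem \ref{DLT} in the proof of Theorem \ref{ELThomeo1}). Assume $\E^+(M_i) \to \E^+(M)$ in $\EL(\Sigma)$; after subsequence $M_i \to M'$ for some $M' \in \overline{B_Y}$. Choose measures $\lambda_i$ on $\E^+(M_i)$ rescaled so that $\lambda_i \to \lambda$ with $|\lambda| = \E^+(M)$. Continuity of length yields $\Len_{M'}(\lambda) = 0$. Since $|\lambda|$ is filling and minimal, no simple closed curve is a leaf, so this zero length cannot come from a parabolic associated to a simple curve and in particular $M'$ cannot be quasi-Fuchsian in $B_Y$; the only possibility is that the positive end of $M'$ is simply degenerate with $\E^+(M') = |\lambda|$, so $M' \in \partial_0 B_Y$. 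Since $\E^-(M') = Y = \E^-(M)$ automatically and $\E^+(M') = \E^+(M)$, Theorem \ref{ELT} gives $M' = M$, proving $(\E^+)^{-1}$ continuous.

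The main obstacle in this plan is verifying cleanly that the geometric limit $M'$ lies in $\partial_0 B_Y$, i.e.~has no accidental parabolic and has a singly degenerate positive end rather than being quasi-Fuchsian with an exotic vanishing-length filling lamination. The argument relies on Corollary \ref{continuouslengthcor}'s characterization together with the filling-minimality of ending laminations to exclude accidental parabolics. All other steps are direct transcriptions of the corresponding pieces of the proof of Theorem \ref{ELThomeo1}, with the negative end being a fixed Teichm\"uller point simplifying the bookkeeping considerably.
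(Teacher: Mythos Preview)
Your outline is essentially what the paper intends: it says only that the proof is ``similar to the previous proof but simpler and we leave it to the reader,'' and your simplification---the swap issue in the Claim of Theorem~\ref{ELThomeo1} disappears because the negative end is the fixed $Y \in \T(\Sigma)$---is exactly the point.

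Two places need tightening. First, $\EL(\Sigma)$ is not compact, so the phrase ``in the compact space $\EL(\Sigma)$'' is incorrect; run the subsequence argument in $\PML(\Sigma)$, which is compact, and then pass to $\EL(\Sigma)$ by forgetting the measure. Second, the obstacle you flag is genuine: Corollary~\ref{continuouslengthcor} is stated only for $M \in \DD(\Sigma,\partial\Sigma)$ or $M \in \partial_0 B_Y$, so you cannot literally invoke it for an arbitrary limit $M' \in \overline{B_Y}$ until you know $M' \in \partial_0 B_Y$. Your resolution is correct in spirit but needs the general Thurston characterization of length-zero laminations, valid on all of $\AH(\Sigma,\partial\Sigma)$: if $M'$ had an accidental parabolic $\gamma$, every ending lamination of $M'$ would be supported in a proper subsurface of $\Sigma$ cut along $\gamma$ and hence could not equal the filling $|\lambda|$, forcing $\Len_{M'}(\lambda) > 0$; and if $M'$ were quasi-Fuchsian then $\Len_{M'}(\lambda) > 0$ for every nonzero $\lambda$. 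Either way one contradicts $\Len_{M'}(\lambda)=0$, so $M' \in \partial_0 B_Y$ after all.

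One difference from the paper's hint is worth noting. The paper explicitly remarks that ``strong convergence in this setting follows from Anderson--Canary,'' paralleling the appeal to Canary in Theorem~\ref{ELThomeo1}. You instead use Bers' compactness of $\overline{B_Y}$ in place of the Double Limit Theorem, and the filling property of $|\lambda|$ in place of any strong-convergence/pleated-surface argument. Both routes are valid; yours is more self-contained for the Bers-slice setting, while the paper's remark signals that one may also transcribe the Theorem~\ref{ELThomeo1} proof more literally, substituting Anderson--Canary for Canary where strong convergence is invoked.
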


The proof is similar to the previous proof but simpler and we leave it to the reader. We note that strong convergence
in this setting follows from Anderson--Canary \cite{andersoncanary}.

\bibliographystyle{plain}
\bibliography{boundary}

\bigskip

\noindent Department of Mathematics, University of Illinois, Urbana--Champaign, IL 61801 \newline \noindent
\texttt{clein@math.uiuc.edu}

\bigskip

\noindent Department of Mathematics, University of Warwick, Coventry CV4 7AL UK
\newline \noindent \texttt{s.schleimer@warwick.ac.uk }

\end{document}